\newcommand{\Z}{\mathbf{Z}}
\newcommand{\R}{\mathbf{R}}
\newcommand{{\ba}}{\bf a}
\newcommand{\ve}{\varepsilon}
\newcommand{\la}{\lambda}
\newcommand{\La}{\Lambda}
\newcommand{\ga}{\gamma}
\newcommand{\Ga}{\Gamma}
\newcommand{\pa}{\partial}
\newcommand{\ra}{\rightarrow}
\newcommand{\del}{\delta}
\newcommand{\al}{\alpha}
\newcommand{\e}{\mathrm{e}}
\newcommand{\be}{\begin{equation}}
\newcommand{\ee}{\end{equation}}
\newtheorem{lem}{Lemma}{\bf}{\it}
\newtheorem{rem}{Remark}{\it}{\rm}
{\it}{\rm}
\newtheorem{theorem}{Theorem}
\newtheorem{proposition}{Proposition}
\newtheorem{corollary}{Corollary}
\numberwithin{theorem}{section}
\numberwithin{lem}{section}
\numberwithin{equation}{section}
\numberwithin{proposition}{section}
\numberwithin{corollary}{section}
\title[Global Stability]{On Global Stability for Lifschitz-Slyozov-Wagner like  equations }
\author{Joseph G. Conlon and Barbara Niethammer}
\address{ (Joseph G. Conlon): University of Michigan\\ Department of Mathematics\\ Ann Arbor,
  MI 48109-1109}
\email{conlon@umich.edu}
\address{ (Barbara Niethammer): Oxford Centre for Nonlinear PDE \\ Mathematical Institute,  University of Oxford \\ 24-29 St. Giles', Oxford, OX1 3LB, United Kingdom }
 \email{  niethammer@maths.ox.ac.uk } 
\keywords{nonlinear pde, coarsening}
\subjclass{35F05,  82C70, 82C26}
\begin{document}

\maketitle

\begin{abstract}
This paper is concerned with  the stability and asymptotic stability  at large time of solutions to a system of equations, which includes the Lifschitz-Slyozov-Wagner (LSW) system in the case when the initial data has compact support. The main result of the paper is a proof of weak global asymptotic stability for LSW like systems.  Previously strong local asymptotic stability results were obtained by Niethammer and  Vel\'{a}zquez for the LSW system with initial data of compact support.  Comparison to a quadratic model plays an important part in the proof of the main theorem when the initial data is  critical. The quadratic model  extends the linear model of Carr and Penrose, and has a time invariant solution which decays exponentially at the edge of its support in the same way as the infinitely differentiable self-similar solution of the LSW model. 
\end{abstract}

\section{Introduction.}
In this paper we continue the study of the large time behavior of solutions to the  Lifschitz-Slyozov-Wagner (LSW) equations \cite{ls,w} begun in \cite{c}. The LSW equations occur in a variety of contexts \cite{pego, penrose} as a mean field approximation for the evolution of particle clusters of various volumes. Clusters of volume $x>0$ have density $c(x,t)\ge 0$ at time $t>0$. The density evolves according to a linear law, subject to the linear mass conservation constraint as follows:
\begin{eqnarray}
\frac{\pa c(x,t)}{\pa t}&=& \frac{\pa}{\pa x}\Big( \Big( 1- \big( x L^{-1}(t)\big)^{1/3} \Big) c(x,t)\Big),  \quad x>0, \label{P1}\\
\int_0^\infty x c(x,t) dx&=& 1. \label{Q1} 
\end{eqnarray}
One wishes then to solve (\ref{P1}) for $t>0$ and initial condition $c(x,0) = c_0(x) \ge 0, \ x > 0$, subject to the constraint (\ref{Q1}).  The parameter $L(t) > 0$ in (\ref{P1}) is determined by the constraint (\ref{Q1}) and is therefore given by the formula,
\be \label{R1}
L(t)^{1/3} = \int^\infty_0 \ x^{1/3} c(x,t)dx \Big/ \int^\infty_0 c(x,t) dx.
\ee
Evidently then $L(t)^{1/3}$ is  the average cluster radius at time $t$ and the time evolution of the LSW system is in fact non-linear.  Existence and uniqueness of solutions to (\ref{P1}), (\ref{Q1}) with given initial data $c_0(x)$ satisfying the constraint has been proven  in \cite{laurencot} (see also  \cite{cg}) for integrable functions $c_0(\cdot)$, and in \cite{np2} for initial data such that $c_0(x)dx$ is an arbitrary Borel probability measure with compact support. In \cite{np3}   the methods of \cite{np2} are further developed to prove existence and uniqueness for initial data such that $c_0(x)dx$ is a Borel probability measure with finite first moment.

The main focus of \cite{c} and the current paper is to understand the phenomenon of {\it coarsening} for the LSW system. Specifically, beginning with rather arbitrary initial data satisfying the constraint (\ref{Q1}), one expects the typical cluster volume to increase linearly in time.  This is a consequence of the dilation invariance of the system. That is if the function $c(x,t), \ x,t>0$, is a solution of (\ref{P1}), (\ref{Q1}), then for any parameter $\lambda>0$ so also is the function $\lambda^2 c(\lambda x,\lambda t)$. Letting $\Lambda(t)$ be the mean cluster volume at time $t$,
\be \label{S1}
\Lambda(t)=\int_0^\infty xc(x,t)dx \Big/\int_0^\infty c(x,t) dx, \quad t\ge 0,
\ee
one expects  $\Lambda(t)\sim C t$ at large $t$ for some constant $C>0$. The problem of proving that typical cluster volume  increases linearly in time is subtle since it is easy to see that the constant $C$ depends on detailed properties of the initial data. In fact if the initial data is a Dirac delta measure then  $C=0$. Less trivially one can construct a  family of self-similar solutions \cite{np1} to (\ref{P1}), (\ref{Q1}) depending on a parameter  $\beta$, which may take any value in the interval $0<\beta\le 1$. In that case $\Lambda(t)\sim C(\beta) t$ at large $t$, where $0<C(\beta)<\beta$. The main result of \cite{c} is  an upper and lower bound on the rate of coarsening of the LSW model  for a large class of initial data: there exist positive constants $C_1,C_2$ depending only on the initial data such that
\be \label{T1}
C_1T \ \le \La(T) \ \le C_2 T \quad {\rm for \ } T\ge 1.
\ee
The class of initial data for which (\ref{T1}) holds includes the exponential function $c_0(x)=e^{-x}, \ 0\le x<\infty,$ and the slowly decreasing functions   $c_0(x)=K_\ve/(1+x)^{2+\ve}, \ 0\le x<\infty,$ where we  require $\ve>0$ in order to satisfy the conservation law (\ref{Q1}).  It also includes initial data with compact support such as  $c_0(x)=K_p(1-x)^{p-1},\  0\le x\le 1, \ \ c_0(x)=0, \ x>1$, where here we require  $p>0$ so that (\ref{Q1}) holds.    A time averaged upper bound  on the rate of coarsening for such a wide class of initial data was already known from a result of  Dai and Pego \cite{dp}, which  applies  the Kohn-Otto argument \cite{ko}  to the LSW system. 

In this paper we shall be confining our investigation of the LSW system to solutions of (\ref{P1}), (\ref{Q1})  which have  initial data with compact support.  It is easy to see that if the initial data  $c_0(\cdot)$ for (\ref{P1}) has compact support then the solution $c(\cdot,t)$ at any later time $t>0$ also has compact support.  Furthermore all self-similar solutions of (\ref{P1}), (\ref{Q1}) have compact support.  The study of solutions to (\ref{P1}), (\ref{Q1})  with initial data which has compact support generally proceeds \cite{np1} by  normalizing the support of the function $c(\cdot,t)$ to  be the interval $0\le x\le 1$ for all $t\ge 0$.  Denoting this normalized density also by $c(\cdot,t)$, we define functions  $w(\cdot,t)\ge 0, \ h(\cdot,t)\ge 0$  by the formulas
\be \label{N1}
w(x,t) \ = \ \int_x^1 c(x',t) \ dx' \ , \quad h(x,t) \ = \ \int_x^1 w(x',t) \ dx'  \ , \quad 0\le x <1.
\ee
Then  the dynamical evolution of solutions to the LSW system  is governed by the PDE 
\begin{equation} \label{A1}
\frac{\pa w(x,t)}{\pa t}+ \left[\phi(x)-\kappa(t)\psi(x)\right] \frac{\pa w(x,t)}{\pa x}=w(x,t), \quad 0\le x<1, \ t\ge 0,
\end{equation}
with the mass conservation law
\begin{equation} \label{B1}
h(0,t) \ = \ \int_{0}^1  w(x,t)  \ dx=1 \  ,  \quad t\ge 0.
\end{equation} 
where the functions $\phi(\cdot)$ and $\psi(\cdot)$ in (\ref{A1}) are given by the formulas,
\begin{equation} \label{C1}
\phi(x)=x^{1/3}-x, \quad \psi(x)=1-x^{1/3}, \quad 0\le x \le 1.
\end{equation}
The initial data $w_0(\cdot)$  for (\ref{A1}), (\ref{B1}) is now taken to be a non-negative decreasing strictly positive function $w_0(x), \ 0\le x<1$, which converges to $0$ as $x\ra 1$.  This implies that the solution $w(x,t)$ of  (\ref{A1}), (\ref{B1})  also is non-negative decreasing strictly positive  in $x$ for $0\le x<1$ and converges to $0$ as $x\ra 1$. The function $\kappa(\cdot)$ in (\ref{A1}) is uniquely determined by the conservation law (\ref{B1}) just as $L(\cdot)$ in (\ref{P1}) is determined from (\ref{Q1}). 

The inequality (\ref{T1}) was proven in \cite{c} by making use of the properties of a certain function of the solution of (\ref{P1}) which we called the {\it beta function}. The beta function $\beta(\cdot,t)$ associated with the solution $w(\cdot,t)$ of (\ref{A1}) is  given by the formula
\be \label{M1}
\beta(x,t) \ = \ \frac{c(x,t) h(x,t)}{w(x,t)^2} \ , \quad 0\le x<1,
\ee
where  $c(\cdot,t)$ and $w(\cdot,t)$ are as in (\ref{N1}). It was shown in \cite{c} that if the beta function of the initial data for (\ref{A1}), (\ref{B1}) satisfies
\be \label{U1}
\lim_{x\ra 1} \beta(x,0) \ = \ \beta_0>0 \ ,
\ee
then the coarsening inequality (\ref{T1}) holds. Since the support of the function $w_0(\cdot)$ is the interval $0\le x\le 1,$ it is easy to see that if (\ref{U1}) holds then one must have $\beta_0\le 1$. 
We shall refer to initial data $w_0(\cdot)$ for (\ref{A1}), (\ref{B1}) as being {\it subcritical} if  (\ref{U1}) holds with $0<\beta_0<1$, and {\it critical} if (\ref{U1}) holds with $\beta_0=1$.  Examples of subcritical and critical initial data are given by functions $w_0(\cdot)$,
\be \label{V1}
w_0(x)=(1-x)^p \  \ {\rm for \ } p>0, \quad w_0(x)=\exp\left[-\frac{1}{1-x}\right] \ , \quad 0\le x<1.
\ee
In (\ref{V1}) the first function   has $\beta_0=p/(1+p)<1,$ and  the second function $\beta_0=1$. 

Self-similar solutions of (\ref{P1}), (\ref{Q1}) correspond to time independent solutions of (\ref{A1}), (\ref{B1}). There is an infinite family of such time independent solutions  characterized by a parameter $\kappa\ge \kappa_0=\phi'(1)/\psi'(1)>0$.  These solutions $w_\kappa(x)$  can be easily distinguished by their behavior as $x\ra 1$ as follows: 
\begin{eqnarray} \label{G1}
{\rm for \ }  \kappa > \kappa_0,  \ &w_\kappa(x) &\sim (1-x)^{p}, \qquad   \qquad 1/p=(\kappa-\kappa_0)|\psi'(1)|   , \\
{\rm for \ }  \kappa = \kappa_0, \  &w_\kappa(x) &\sim \exp[-1/\ga(1-x)],   \quad  \ga=\kappa_0\psi''(1)-\phi''(1).
\nonumber
\end{eqnarray}
Letting $\beta_\kappa(\cdot)$ denote the beta function (\ref{M1}) corresponding  to $w_\kappa(\cdot)$, it is easy to see that
\be \label{W1}
\kappa \ = \ [ 1/\lim_{x\ra 1} \beta_\kappa(x) -\phi'(1)-1]/|\psi'(1)|  \ ,
\ee
so that $w_\kappa(\cdot)$ is subcritical for $\kappa>\kappa_0$ and critical when $\kappa=\kappa_0$.  It was shown in \cite{nv1} that if the solution $w(\cdot,t)$ of (\ref{A1}), (\ref{B1}) converges as $t\ra\infty$ to $w_\kappa(\cdot)$ with $\kappa>\kappa_0,$ then the initial data $w(\cdot,0)$ must be {\it regularly varying} with exponent $p$  given by (\ref{G1}). Furthermore if the initial data is sufficiently close in the regular variation sense to $w_\kappa(\cdot)$, then $\lim_{t\ra\infty} w(x,t)=w_\kappa(x)$ uniformly on any compact subset of $[0,1)$.  This in turn implies that the average volume (\ref{S1})  satisfies $\lim_{T\ra\infty} \La(T)/T=C>0$. In \cite{c} it was observed that  if the beta function (\ref{M1}) corresponding to the initial data $w(\cdot,0)$ satisfies (\ref{U1}) with $\beta_0=p/(1+p)<1$, then $w(\cdot,0)$ must be {\it regularly varying} with exponent $p$, and that these two conditions are virtually equivalent  (see Lemma 4 of \cite{c} and the remark following). 

The main result of \cite{nv1} can be considered  a {\it strong local} asymptotic stability result  for the LSW model with subcritical initial data.  A corresponding result  for critical initial data was proven in \cite{nv2}. Again it was shown that if the solution $w(\cdot,t)$ of (\ref{A1}), (\ref{B1}) converges as $t\ra\infty$ to $w_{\kappa}(\cdot)$ with $\kappa=\kappa_0,$ then the initial data $w(\cdot,0)$ must satisfy a certain criterion-equation (\ref{A4}) of the present paper. If the initial data is sufficiently close in the sense of this criterion to $w_\kappa(\cdot)$, then $\lim_{t\ra\infty} w(x,t)=w_\kappa(x)$ uniformly on any compact subset of $[0,1)$.   We show in $\S4$ that if (\ref{U1}) holds with $\beta_0=1$ then the criterion of \cite{nv2} for the initial data of (\ref{A1}), (\ref{B1}) is satisfied.

Our goal in the present paper is  to prove {\it weak global}  asymptotic stability results  corresponding to the {\it strong local} asymptotic stability results of \cite{nv1,nv2}.
It will be useful to our study  to generalize the system (\ref{A1}), (\ref{B1}), (\ref{C1}) by  allowing more general  functions $\phi(\cdot)$ and $\psi(\cdot)$ on $[0,1]$ than (\ref{C1}). We do however require these functions to be continuous on $[0,1]$, twice continuously differentiable  on $(0,1]$, and have the properties:
\begin{eqnarray}
\quad &\phi(x) \  {\rm is \  concave \  and \  satisfies \ }& \quad \phi(0)=\phi(1)=0, \ -1< \phi'(1)<0.  \label{D1}\\
\quad &\psi(x)  \  {\rm is \  convex \  and \  satisfies \ }&\quad \psi(1)=0,  \ \  \psi'(1)<0, \ \psi''(1)-\phi''(1)>0.  \label{E1}
\end{eqnarray}
 Evidently the conditions (\ref{D1}), (\ref{E1}) imply that the functions $\phi(x), \ \psi(x)$ are strictly positive for $0<x<1$. The conservation law (\ref{B1}), when combined with (\ref{A1}), implies that the parameter
 $\kappa(t)$ is given in terms of $w(\cdot, t)$ by the formula,
\begin{equation} \label{F1} 
\frac{1}{\kappa(t)}\left[ \int_0^1 [1+\phi'(x)]w(x,t)dx\right]= \psi(0)w(0,t)+\int_0^1\psi'(x)w(x,t)dx.
\end{equation}
 One can see from the conditions  (\ref{D1}), (\ref{E1})  and the fact that the function $w(\cdot,t)$ is non-negative decreasing, that $\kappa(t)$ as determined by (\ref{F1}) is positive. Hence the coefficient $\phi(\cdot)-\kappa(t)\psi(\cdot)$  of $\pa w(\cdot,t)/\pa x$ in (\ref{A1}) is concave  for all $t\ge 0$. As in the LSW case there is an infinite family of time independent solutions   of (\ref{A1}) characterized by a parameter $\kappa\ge \kappa_0=\phi'(1)/\psi'(1)>0$ which have the properties (\ref{G1}), (\ref{W1}). 
 
 Our first result is a weak global asymptotic stability result for (\ref{A1}), (\ref{B1}) in the case when  the initial data is subcritical. In order to prove it we need to make a further assumption on the functions $\phi(\cdot), \ \psi(\cdot)$ beyond (\ref{D1}), (\ref{E1}), namely that
 \be \label{O1}
 \phi(\cdot), \ \psi(\cdot)  \ {\rm are \ } C^3 \ {\rm on \ } (0,1]  \ {\rm \ and \ } \phi'''(x)\ge0, \  \psi'''(x)\le 0  \   {\rm for \ } 0<x\le 1.
 \ee
 Evidently (\ref{O1}) holds for the LSW functions (\ref{C1}). 
\begin{theorem} Let $w(x,t), \ x,t\ge 0$, be the solution to (\ref{A1}), (\ref{B1}) with coefficients satisfying (\ref{D1}), (\ref{E1}) and assume that the initial data $w(\cdot,0)$ has beta function $\beta(\cdot,0)$ satisfying (\ref{U1}) with $0<\beta_0<1$.  Then there is a positive constant $C_1$ depending only on the initial data such that $\kappa(t)\ge C_1$ for all $t\ge  0$. If in addition (\ref{O1}) holds, then there is a positive constant $C_2$  depending only on the initial data such that $\kappa(t)\le C_2$ for all $t\ge  0$ and
\be \label{J1}
\lim_{T\ra \infty}\frac{1}{T}\int_0^T \kappa(t) \ dt \ = \ [1/\beta_0-\phi'(1)-1]/|\psi'(1)| \ .
\ee
\end{theorem}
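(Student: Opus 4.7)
The plan is to connect $\kappa(t)$ to the asymptotic shape of $w(\cdot,t)$ at $x=1$, then extract (\ref{J1}) from a characteristic analysis near $x=1$. Rewriting (\ref{F1}) via integration by parts (using $\phi(0)=\phi(1)=\psi(1)=0$) gives
\[
\kappa(t) \;=\; \frac{1+\int_0^1\phi(x)\,c(x,t)\,dx}{\int_0^1\psi(x)\,c(x,t)\,dx}, \qquad c=-\pa w/\pa x,
\]
so since $\phi\ge 0$ the numerator is at least $1$ and $\int_0^1\psi(x)c(x,t)\,dx\le \psi(0)\,w(0,t)$; hence $\kappa(t)\ge 1/[\psi(0)w(0,t)]$. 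A uniform-in-$t$ upper bound on $w(0,t)$ therefore yields the lower bound on $\kappa(t)$, and this is essentially the content of Lemma 4 and the coarsening estimate (\ref{T1}) of \cite{c}, whose argument (which uses only the conservation law and the subcritical beta condition (\ref{U1}) with $\beta_0<1$) carries over to the generalized system under the hypotheses (\ref{D1}), (\ref{E1}).

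For (\ref{J1}), the key is a characteristic analysis near $x=1$. Along a characteristic $\dot X=\phi(X)-\kappa(t)\psi(X)$ the PDE (\ref{A1}) gives $dw/dt=w$, so $w(X(t),t)=w(X(0),0)e^t$. Linearizing the characteristic equation at $x=1$ yields
\[
1-X(T)\ \approx\ (1-X(0))\,\exp\!\Big[T\phi'(1)-\psi'(1)\!\int_0^T\!\kappa(s)\,ds\Big].
\]
If $w(1-\delta,t)\sim A(t)\,\delta^{p_0}$ as $\delta\to 0^+$ with $p_0=\beta_0/(1-\beta_0)$ (as follows from regular variation of the initial data at $x=1$ together with the preservation of $\beta_0$ under the flow), then comparing the two expressions for $w(X(T),T)$ produces
\[
\log\!\frac{A(T)}{A(0)}\ =\ T\,[1-p_0\phi'(1)]+p_0\psi'(1)\!\int_0^T\!\kappa(s)\,ds.
\]
Provided $A(T)$ stays in a fixed compact subinterval of $(0,\infty)$, dividing by $T$ and sending $T\to\infty$ gives $(1/T)\int_0^T\kappa(s)\,ds\to[1-p_0\phi'(1)]/[-p_0\psi'(1)]=[1/\beta_0-\phi'(1)-1]/|\psi'(1)|$. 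The preservation of $\beta_0$ at $x=1$ is checked infinitesimally: direct computation of $d\log c/dt$ and $d\log h/dt$ along characteristics shows that $d\log\beta/dt$ vanishes to leading order as $X\to 1$, because the respective contributions $1-\phi'(X)+\kappa\psi'(X)$ and $1+\phi'(X)-\kappa\psi'(X)$ sum to $2=2\,d\log w/dt$.

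The upper bound $\kappa(t)\le C_2$ and the two-sided control $0<c_1\le A(t)\le c_2$ are where the extra assumption (\ref{O1}) enters. The sign conditions $\phi'''\ge 0,\ \psi'''\le 0$ provide monotonicity on the $1-x$ corrections in the Taylor expansions which should translate into one-sided differential inequalities for $A(t)$, or for a comparable functional of $w(\cdot,t)$, preventing both blow-up and collapse. The main obstacle is precisely this uniform-in-$t$ control: the infinitesimal preservation of $\beta_0$ at $x=1$ is essentially automatic, but turning it into a two-sided bound on the multiplicative prefactor $A(t)$ requires a global estimate that does not follow from pointwise propagation alone, and this is the step where (\ref{O1}) is indispensable.
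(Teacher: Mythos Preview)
Your lower bound for $\kappa(t)$ is correct and essentially matches the paper's Lemma 2.2: the inequality $\kappa(t)\ge 1/[\psi(0)w(0,t)]$ is equivalent (since $\langle X_t\rangle=1/w(0,t)$) to the paper's (\ref{E2}), and the uniform upper bound on $w(0,t)$ follows from the subcritical condition via $\sup_x\beta(x,t)\le(1+\beta_0)/2$ for large $t$ together with the identity (\ref{F2}).

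Your sketch for (\ref{J1}) also has the right core idea --- it is the content of Lemma 2.5 --- but the assumption that $A(T)$ stays in a compact subinterval of $(0,\infty)$ is precisely what must be proved, and as stated it is nearly circular with the conclusion. The paper avoids this by using two-sided $\eta$-perturbed regular-variation bounds (\ref{AE2}) on $w(\cdot,T_0)$ together with the already-established two-sided control $1\le w(0,t)\le 2/(1-\beta_0)$; these give the $\limsup$ via (\ref{AH2}) and the $\liminf$ via a separate exponential decay estimate (\ref{AL2}) along characteristics, after which one sends $\eta\to 0$. Packaging everything into a single prefactor $A(t)$ hides this two-step structure.

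The genuine gap is the upper bound $\kappa(t)\le C_2$. You correctly flag that this is where (\ref{O1}) enters, but the mechanism you suggest --- monotone Taylor corrections producing one-sided differential inequalities for $A(t)$ --- is not what the paper does and it is not clear it can be made to work. The paper's Lemma 2.4 instead derives the evolution equation (\ref{J2}) for $\log\beta(x,t)$ along characteristics with nonnegative source $g(x,t)$ given by (\ref{K2}); the role of (\ref{O1}) is to guarantee, via (\ref{M2}) and log-concavity of $h(\cdot,t)$, that $g(\cdot,t)$ is \emph{decreasing} in $x$ once $\sup_x\beta(x,t)\le 1$. This monotonicity drives a contradiction argument: if $\int_0^1\beta(x,t)\,dx$ drops below a small threshold $\delta_0$, then $\kappa>\kappa_0$ by Lemma 2.3, the characteristics are squeezed toward $x=1$ by (\ref{W2}), and the pointwise bounds (\ref{X2}), (\ref{Z2}), (\ref{AA2}) force $\int g(x(s),s)\,ds$ along characteristics to stay bounded, so by (\ref{U2}) the function $\beta(\cdot,t)$ cannot actually collapse. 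The upper bound on $\kappa(t)$ then follows from (\ref{F2}) and Lemma 2.3. None of this machinery appears in your proposal, and without it the upper bound --- and with it the boundedness of $A(T)$ that your limit argument presupposes --- remains open.
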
 
In the LSW case the condition $C_1\le\kappa(t)\le C_2, \ t\ge 0,$ implies that the ratio of the mean cluster radius to maximum cluster radius is uniformly bounded strictly between $0$ and $1$ for $t\ge 0$.  We prove Theorem 1.1 in $\S2$  by extending the methodology  of the beta function developed in \cite{c}. In order to prove a version of the theorem for critical initial data we have had to have recourse to a different approach. The approach is based on the observation that when the functions $\phi(\cdot), \ \psi(\cdot)$ are quadratic, then the  generally infinite dimensional dynamical system (\ref{A1}), (\ref{B1}) reduces to a two dimensional system. One way of seeing this is to note that for quadratic $\phi(\cdot), \ \psi(\cdot)$ the commutator of the operators $A, \ B$ defined by
\begin{equation} \label{I1}
A=\phi(x)\frac{d}{dx},  \quad B= \psi(x)\frac{d}{dx},
\end{equation}
is a linear combination of $A$ and $B$. Thus $A$ and $B$ generate a two dimensional Lie algebra. The corresponding two dimensional dynamical system can be analyzed in detail and so we are able to prove in $\S3$ and  $\S5$ strong global asymptotic stability for the time independent solutions (\ref{G1}) of (\ref{A1}), (\ref{B1}) .
\begin{theorem} Assume that the functions $\phi(\cdot), \ \psi(\cdot)$ are quadratic, and that  the initial data $w(\cdot,0)$ for (\ref{A1}), (\ref{B1})  has beta function $\beta(\cdot,0)$ satisfying
(\ref{U1}).    Then setting $\kappa=[1/\beta_0-\phi'(1)-1]/|\psi'(1)|$, one has for $\beta_0<1$, 
\be \label{K1}
\lim_{t\ra\infty} \kappa(t ) =   \kappa, \quad \lim_{t\ra\infty} \|\beta(\cdot,t)-\beta_\kappa(\cdot)\|_\infty=0,
\ee
where $\beta_\kappa(\cdot)$ is the beta function of the time independent solution $w_\kappa(\cdot)$ of (\ref{G1}). If $\beta_0=1$ then for any $\ve$ with $0<\ve<1$, one has
\be \label{L1}
\lim_{t\ra\infty} \kappa(t ) =   \kappa_0, \quad \lim_{t\ra\infty} \sup_{0\le x\le 1-\ve}|\beta(x,t)-\beta_{\kappa_0}(x)|=0.
\ee
\end{theorem}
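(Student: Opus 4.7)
The plan is to make the paper's Lie-algebra observation concrete via an explicit change of variables. Since $\phi, \psi$ are quadratic with $\phi(1) = \psi(1) = 0$, the substitution $s = 1/(1-x)$ sends $[0,1)$ bijectively to $[1,\infty)$, and Taylor expansion at $x = 1$ gives
\[\phi(x) = -\frac{\phi'(1)}{s} + \frac{\phi''(1)}{2s^2}, \qquad \psi(x) = -\frac{\psi'(1)}{s} + \frac{\psi''(1)}{2s^2}.\]
Since $\partial_x = s^2 \partial_s$, equation (\ref{A1}) transforms into
\[\partial_t w + [\alpha(t) s - \delta(t)]\,\partial_s w = w, \qquad s \ge 1,\, t \ge 0,\]
with $\alpha(t) = (\kappa(t) - \kappa_0)\psi'(1)$ and $\delta(t) = (\kappa(t)\psi''(1) - \phi''(1))/2 \ge \gamma/2 > 0$. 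The coefficient is now \emph{affine} in $s$, so the characteristic ODE is linear, and the method of characteristics gives the closed representation
\[w(s,t) = e^t\, w_0\!\Big(\frac{s + N(t)}{M(t)}\Big), \quad M(t) = \exp\int_0^t \alpha(\tau)d\tau,\quad N(t) = M(t)\int_0^t \delta(\tau)/M(\tau)\,d\tau.\]
The conservation law (\ref{B1}), written in the $s$ variable, becomes the single scalar identity
\[e^t \int_1^\infty v^{-2}\, w_0\!\Big(\frac{v + N(t)}{M(t)}\Big)\,dv = 1,\]
which, combined with $M' = \alpha M$ and $N' = \alpha N + \delta$, closes the promised two-dimensional dynamical system for $(M, N)$ with $\kappa(t)$ determined implicitly.

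The steady states $w_\kappa$ of (\ref{G1}) correspond to explicit $(M, N)$-trajectories: for $\kappa > \kappa_0$ one has $M(t) = e^{\alpha_\infty t}$ with $\alpha_\infty = (\kappa - \kappa_0)\psi'(1) < 0$ and $N(t) \to \delta_\kappa/|\alpha_\infty|$, while for $\kappa = \kappa_0$ one has $M \equiv 1$ and $N(t) = (\gamma/2) t$. For \emph{subcritical} initial data ($\beta_0 < 1$), (\ref{U1}) together with Lemma 4 of \cite{c} yields $w_0(s) \sim C s^{-p}$ as $s \to \infty$ with $p = \beta_0/(1 - \beta_0)$. Substituting this into the scalar constraint and applying a Karamata-type asymptotic, I would conclude that any solution of the 2D system satisfies $M(t)e^{-\alpha_\infty t} \to M_\ast$ and $N(t) \to N_\ast$ for the specific $\kappa = [1/\beta_0 - \phi'(1) - 1]/|\psi'(1)|$. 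This forces $\kappa(t) \to \kappa$ via $\alpha(t) = M'(t)/M(t) \to \alpha_\infty$. Since $\beta(x, t)$ is an explicit functional of $w_0$ and $(M(t), N(t))$ through the representation formula, and evaluation at the limiting pair reproduces $\beta_\kappa$, a dominated-convergence argument then yields $\|\beta(\cdot, t) - \beta_\kappa\|_\infty \to 0$.

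The \emph{critical} case $\beta_0 = 1$ is the main obstacle. Here $w_0(s)$ decays faster than any power of $s$ as $s \to \infty$, and the target fixed point lies on the degenerate manifold $\alpha_\infty = 0$ on which the linearization of the $(M, N)$-system vanishes; convergence is therefore sub-exponential and its precise rate is governed by the tail of $w_0$. I would handle this by showing directly from the scalar constraint that $M(t) \to 1$ and $N(t)/t \to \gamma/2$, using that for large $t$ the integrand is concentrated near the endpoint $v = 1$, that is, near the image of the ``edge'' of the transported mass. The restriction $x \in [0, 1 - \varepsilon]$ in (\ref{L1}) corresponds to $s \in [1, 1/\varepsilon]$; on such a compact $s$-interval the argument $(s + N(t))/M(t)$ runs off to infinity along a universal trajectory independent of the fine tail of $w_0$, yielding $\beta(x, t) \to \beta_{\kappa_0}(x)$ uniformly there. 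Uniformity on $[1 - \varepsilon, 1]$ cannot be expected because the near-$x = 1$ behaviour of $w_0$ persists under the flow and distinguishes different critical initial data.
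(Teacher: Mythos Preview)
Your substitution $s=1/(1-x)$ and the resulting affine characteristics give exactly the paper's two-dimensional reduction of \S3: with the paper's $(u,v)$ one has $u=1/M$ and the function $a$ of (\ref{F3}) equals $N/M$, since (\ref{E3}) reads $1-F(x,t)=(1-x)/[u(t)+a(t)(1-x)]$, i.e.\ $\tilde s = u s + a$ in your coordinates. For the subcritical case your outline is therefore essentially the paper's proof (Proposition~3.1 and the paragraph following it); the paper supplies the analytic step your ``Karamata-type asymptotic'' would have to reproduce, namely uniform bounds on $\partial_\xi G_0(\xi,\eta)$ and $\eta\,\partial_\eta G_0(\xi,\eta)$ that force $\xi(t)=v/u\to p$ in the reduced system (\ref{S3})--(\ref{T3}).

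The real gap is in the critical case. You propose to show $M(t)\to 1$ and $N(t)/t\to\gamma/2$, but the paper does not prove $u(t)\to 1$ (equivalently $M\to 1$), and there is no reason to expect it for general critical data: since $\kappa(t)\to\kappa_0$ only gives $d\log u/dt\to 0$, the integral $\log u(t)=\int_0^t(\phi'(1)-\psi'(1)\kappa(s))\,ds$ may well diverge. What is actually needed, and what the paper establishes, is only $\alpha(t)\to 0$. To obtain even this much the paper works considerably harder than your sketch suggests. It first proves (Lemmas~5.1--5.2) that $u(t)/v(t)\to 0$, which is shown to be \emph{equivalent} to two-sided bounds on $\kappa(\cdot)$ and is itself nontrivial. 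It then introduces $y(t)=1-1/a_1(t)$, writes $y(t)=z(t-\tau(t))$ with $z$ as in (\ref{AN4}), and the heart of the argument is the proof that $\tau(t)\to\tau_0$ for the specific $\tau_0$ fixed by the transcendental relation (\ref{Z5}). Only then do (\ref{AE5})--(\ref{AF5}) give $\kappa(t)\to\kappa_0$, and only the resulting asymptotic (\ref{M5}) for $F(x,t)$ yields the bound on $(1-x)^2\partial_x\log[w(x,t)/w_{\kappa_0}(x)]$ needed for $\beta(\cdot,t)\to\beta_{\kappa_0}$ on $[0,1-\varepsilon]$. Your claim that on compact $s$-intervals ``$(s+N)/M$ runs off to infinity along a universal trajectory'' does not suffice: $\beta(x,t)$ is a ratio involving an integral over all $s'\ge s$, and its limit depends on the precise asymptotic relation between the slope $1/M$ and the intercept $N/M$ of the affine map $s\mapsto\tilde s$, not merely on their individual divergence.
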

In $\S5$ we note that the convergence result (\ref{L1}) for critical initial data can be improved if we make the further assumption on the initial data:
\be \label{X1}
{\rm There \ exists \ } \del>0 \ {\rm such \ that \ } \beta(x,0)\le 1 \ {\rm for \ } 1-\del\le x<1.
\ee
Thus if (\ref{U1}) with $\beta_0=1$ and (\ref{X1}) hold, then $\lim_{t\ra\infty} \|\beta(\cdot,t)-\beta_{\kappa_0}(\cdot)\|_\infty=0$. The condition (\ref{X1}) turns out to be important for us  when we seek to extend Theorem 1.1 to the case of critical initial data. We also need an extra assumption on the functions $\phi(\cdot), \ \psi(\cdot)$ beyond (\ref{D1}), (\ref{E1}) and (\ref{O1}).  The assumption is as follows:
\begin{multline} \label{Y1}
{\rm The \  function \ } x\ra \phi'(x)+\phi'(1)-\phi(x)[\psi'(x)+\psi'(1)]/\psi(x) \\
 {\rm is \ decreasing \ for \ } \ \ 0\le x<1.
\end{multline}
One can easily see that the LSW functions (\ref{C1}) satisfy (\ref{Y1}). 
\begin{theorem}
Let $w(x,t), \ x,t\ge 0$, be the solution to (\ref{A1}), (\ref{B1}) with coefficients satisfying (\ref{D1}), (\ref{E1}), (\ref{O1}),(\ref{Y1}) and assume that the initial data $w(\cdot,0)$ has beta function $\beta(\cdot,0)$ satisfying (\ref{U1}) with $\beta_0=1$.   If $\lim_{x\ra0}\phi(x)/x=\infty,$ then there exist positive constants $C_1,C_2$ depending only on the initial data such that $C_1\le\kappa(t)\le C_2,$ for $t\ge 0,$ and (\ref{J1}) holds. If the functions $\phi(\cdot),\psi(\cdot)$ are $C^2$ on the closed interval $[0,1]$ and in addition the initial data satisfies (\ref{X1}),  then there exist positive constants $C_1,C_2$ depending only on the initial data such that $C_1\le\kappa(t)\le C_2,$ for $t\ge 0,$ and (\ref{J1}) holds. 
\end{theorem}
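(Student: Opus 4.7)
My strategy is to extend the beta function methodology of Theorem 1.1 to the critical regime $\beta_0=1$, where the subcritical margin $1-\beta_0$ disappears, by invoking (\ref{Y1}) to recover the monotonicity needed for the upper bound on $\kappa(t)$, and by using either the singularity of $\phi$ at $x=0$ (first sub-case) or a comparison with the quadratic model (second sub-case) to produce the lower bound. Once the two-sided bound on $\kappa(t)$ is established, the time-averaged convergence (\ref{J1}) follows because $\beta_0=1$ forces the right-hand side of (\ref{J1}) to equal $\kappa_0$, which is the unique value consistent with the invariance of $\lim_{x\to 1}\beta(\cdot,t)=1$ verified in \S4.

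For the upper bound $\kappa(t)\le C_2$, I follow the outline of the proof of Theorem 1.1: differentiate the definition (\ref{M1}) along (\ref{A1}) to obtain an evolution equation for $\beta(x,t)$, integrate against a suitable weight, and combine with (\ref{F1}). In the subcritical case a strict gap $\beta(\cdot,0)\le 1-\eta$ near $x=1$ propagates under the flow and yields the upper bound, but when $\beta_0=1$ this gap collapses, which is where (\ref{Y1}) enters: the quantity whose monotonicity (\ref{Y1}) asserts arises precisely as the coefficient of the leading correction term in the evolution of $\beta$ near $x=1$, and its decreasing character prevents $\beta(\cdot,t)$ from crossing $1$ and so keeps $\kappa(t)$ controlled from above via (\ref{F1}).

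The lower bound $\kappa(t)\ge C_1$ is the principal obstacle and splits along the two alternative hypotheses. In the first sub-case, rewriting (\ref{F1}) by integration by parts gives $\kappa(t)=[1+\int_0^1\phi(x)c(x,t)dx]/\int_0^1\psi(x)c(x,t)dx$, and the singular behavior $\phi(x)/x\to\infty$ at the origin combined with the monotonicity of $w(\cdot,t)$ prevents the denominator from becoming large relative to the numerator; concretely, the concave drift $\phi-\kappa\psi$ with $\phi'(0)=+\infty$ cannot sweep characteristics toward $x=1$ fast enough to be consistent with (\ref{B1}) unless $\kappa$ stays bounded below. In the second sub-case, $\phi,\psi\in C^2[0,1]$ forces $\phi'(0)$ finite and the above argument breaks down, so I would instead compare the true dynamics near $x=1$ with the quadratic model obtained by replacing $\phi,\psi$ by their degree-two Taylor polynomials at $x=1$. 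For the quadratic model, Theorem 1.2 together with its strengthening in \S5 under (\ref{X1}) yields the uniform convergence $\|\beta(\cdot,t)-\beta_{\kappa_0}(\cdot)\|_\infty\to 0$; a perturbative comparison controlled by (\ref{O1}) transports this convergence onto a neighborhood of $x=1$ in the original equation, and (\ref{W1}) then pins $\kappa(t)$ down from below for all large $t$.

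The main technical obstacle will be the comparison step in the second sub-case: one must prevent the cubic and higher Taylor remainders, governed by (\ref{O1}), from accumulating on the infinite time horizon and destroying the convergence established for the quadratic model. Here (\ref{X1}) is essential because it guarantees $\beta(x,0)\le 1$ initially, a property propagated by the flow via the monotonicity in (\ref{Y1}), so that the solution never overshoots the critical profile $\beta_{\kappa_0}$ and the linearization around it remains valid. Once $C_1\le\kappa(t)\le C_2$ is established, (\ref{J1}) follows by a compactness-and-invariance argument: weak subsequential limits of the time-averaged measures $(1/T_n)\int_0^{T_n}\delta_{w(\cdot,t)}\,dt$ are supported on stationary profiles of the flow (\ref{A1}), (\ref{B1}), and since the boundary value $\lim_{x\to 1}\beta(\cdot,t)=1$ is preserved, (\ref{W1}) forces every such stationary profile to correspond to $\kappa=\kappa_0$, yielding $(1/T_n)\int_0^{T_n}\kappa(t)\,dt\to\kappa_0$ as required.
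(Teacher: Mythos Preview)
Your proposal misidentifies the mechanism for the upper bound $\kappa(t)\le C_2$. You claim that the key is to keep $\beta(\cdot,t)$ from crossing $1$, invoking (\ref{Y1}) for this purpose, but by (\ref{F2}) and Lemma 2.3 the upper bound on $\kappa(t)$ is equivalent to a \emph{lower} bound on $\int_0^1\beta(z,t)\,dz$, i.e.\ to $\inf\beta(\cdot,t)\ge\beta_\infty>0$; an inequality $\sup\beta(\cdot,t)\le 1$ is neither sufficient nor what is actually used. In the paper this lower bound is Lemma 4.1, an intricate argument via the dyadic intervals $I_N(t)$ of (\ref{L4})--(\ref{R4}) and the auxiliary functions $\beta_N(t)$, relying only on (\ref{D1}), (\ref{E1}), (\ref{O1}); condition (\ref{Y1}) plays no role here. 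The actual use of (\ref{Y1}) is in \S6: it makes the function $z\mapsto g(z,u)$ of (\ref{G6}) concave (Lemma 6.1), hence the transformed characteristic map $\hat F(\cdot,t)$ convex (Corollary 6.1), which is the ingredient behind Lemma 6.2 and Proposition 6.1.

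Your route to (\ref{J1}) via compactness of time-averaged measures and identification of their limits with stationary profiles is not the paper's and has its own gap: you give no reason why such weak limits should be invariant, and even if they were, the boundary condition $\lim_{x\to 1}\beta(x,t)=1$ need not survive weak convergence. The paper instead proves (\ref{J1}) directly in Lemma 4.2 by comparing $F(0,t)$ with the function $z(t)$ defined by $e^t w_0(z(t))=1$; this requires, beyond $\inf\kappa(\cdot)>0$, the nondegeneracy condition (\ref{AM4}), namely $\inf_{t\ge 0}w(x,t)>0$ for each fixed $x\in[0,1)$. Establishing (\ref{AM4}) is a substantial part of the work (Proposition 6.1), carried out through the transformation (\ref{P6}) and the convexity estimates of Lemma 6.2 and Corollary 6.3, and your outline omits it entirely. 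In the second sub-case the role of (\ref{X1}) is also more concrete than you describe: it ensures that the function $g(\cdot)$ of (\ref{B4}) is decreasing near $x=1$, which is what drives the lower bound $\beta(0,t)\ge 1-\delta$ in (\ref{BI6}) and thence $\inf\kappa(\cdot)>0$ via the differential inequality (\ref{BL6}) for $\log\langle X_t\rangle$.
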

Since the LSW function $\phi(\cdot)$ of (\ref{C1}) satisfies $\lim_{x\ra0}\phi(x)/x=\infty,$ Theorem 1.1 implies that weak global asymptotic stability holds for solutions of the LSW system with critical initial data as defined by (\ref{U1}) with $\beta_0=1$. It seems at first surprising that the system (\ref{A1}), (\ref{B1}) is more stable when the function $\phi(\cdot)$ has a singularity at $x=0$. Proposition 4.2 however and the remark following  indicates why this may be the case. The proof of Theorem 1.3 is contained in $\S4$ and $\S6$. In $\S4$ we use the methodology of the beta function to prove certain results, in particular some bounds on the function $\kappa(\cdot)$.    In order to prove the asymptotic stability result (\ref{J1}),  we transform in $\S6$ the system (\ref{A1}), (\ref{B1}) to a system which can be compared to the quadratic model. Hence our proof of asymptotic stability in the  critical case hinges on viewing  (\ref{A1}), (\ref{B1}) as a perturbation of the {\it quadratic model}.  In contrast, the proof of asymptotic stability in the  subcritical case can be accomplished by using the properties of the beta function alone. In \cite{c} it was observed that the methodology of the beta function is a way of viewing the system (\ref{A1}), (\ref{B1}) as a perturbation of the {\it linear model} studied by Carr and Penrose \cite{carr,cp}.  Since there is no critical time independent solution $w_{\kappa_0}(\cdot)$ of (\ref{A1}), (\ref{B1}) for the linear model, it is therefore not surprising that in the proof of asymptotic stability for the critical case one needs to go beyond the methodology of the beta function.

\section{Global Stability for subcritical initial data}
In this section we shall prove Theorem 1.1. First recall that the solution $w(x,t)$ to (\ref{A1}) is given in terms of the initial data $w_0(\cdot)$ by the formula $w(x,t)=e^t w_0(F(x,t)), \ 0\le x \le 1$, where  the mapping $F(\cdot,t)$ is defined by $F(x,t)=x(0)$, with $x(s), \ 0\le s\le t$, being the solution to the terminal value problem
\be \label{A2}
\frac{d x(s)}{ds}= \phi(x(s))-\kappa(s)\psi(x(s)), \quad s\le t, \ x(t)=x.
\ee
The derivative $\pa F(x,t)/\pa x$ is given in terms of the solution to (\ref{A2}) by the formula
\be \label{B2}
\frac{\pa F(x,t)}{\pa x} \ = \  \exp\left[- \int_0^t \{\phi'(x(s))-\kappa(s)\psi'(x(s))\}\ ds \right] \ .
\ee
By virtue of our assumptions (\ref{D1}), (\ref{E1}) and the positivity of the function $\kappa(\cdot)$, it follows from (\ref{B2}) that $F(x,t)$ is a convex function of $x, \ 0\le x \le 1$.  
\begin{lem} 
Let $F(\cdot,\cdot)$ be defined by  (\ref{A2}), where $\kappa(\cdot)$ is determined by the solution of (\ref{A1}), (\ref{B1}). Then $F(0,t)$ is an increasing function of $t$ and $\lim_{t\ra\infty} F(0,t)=1$.
\end{lem}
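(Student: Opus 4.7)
My plan is to prove the two assertions separately, both using the representation $w(x,t) = e^t w_0(F(x,t))$ stated just before the lemma together with the characteristic ODE (\ref{A2}).

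For monotonicity I argue at the level of characteristics. For $y \in [0,1)$ let $x_y(s)$ be the forward solution of $dx/ds = \phi(x) - \kappa(s)\psi(x)$ with $x_y(0) = y$, and set $T(y) = \inf\{s > 0 : x_y(s) = 0\}$. By construction $F(0, T(y)) = y$, so $F(0,\cdot) = T^{-1}$ on its domain. For $y_1 < y_2$ in this domain, uniqueness of solutions to the ODE prevents $x_{y_1}$ and $x_{y_2}$ from crossing, so $x_{y_1}(s) < x_{y_2}(s)$ for all $s$, and the lower trajectory reaches $0$ first: $T(y_1) < T(y_2)$. Inverting gives $F(0,t)$ strictly increasing in $t$.

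For the limit, observe first that $x=1$ is a fixed point of (\ref{A2}) since $\phi(1) = \psi(1) = 0$, so by uniqueness no trajectory started below $1$ can reach $1$ in finite time; in particular $F(0,t) < 1$, and the increasing quantity $F(0,t)$ has a limit $L \in (0,1]$. Substituting $w = e^t w_0 \circ F$ into the mass conservation (\ref{B1}) yields
\[
e^{-t} = \int_0^1 w_0(F(x,t))\,dx,
\]
so the right side must tend to zero as $t\to\infty$. Assume for contradiction that $L < 1$. Convexity of $F(\cdot,t)$ in $x$ together with $F(1,t) = 1$ gives the chord bound $F(x,t) \le (1-x) F(0,t) + x \le (1-x) L + x$ for $x \in [0,1]$. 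Setting $x^* = (1-L)/2$ one computes $(1-x^*) L + x^* = (1+L^2)/2 < 1$, so $F(x,t) \le (1+L^2)/2$ for $x \in [0,x^*]$ and all $t \ge 0$. Since $w_0$ is decreasing and strictly positive on $[0,1)$, this gives
\[
\int_0^1 w_0(F(x,t))\,dx \;\ge\; x^*\, w_0\bigl((1+L^2)/2\bigr) \;>\; 0
\]
uniformly in $t$, contradicting $e^{-t} \to 0$.

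I do not foresee a serious obstacle. Monotonicity uses only the non-crossing of characteristics, and the limit uses only mass conservation together with the convexity of $F(\cdot,t)$. The one point worth flagging is that $F(1,t) = 1$ comes from $x = 1$ being a fixed point of (\ref{A2}), and it is this boundary value, combined with the chord inequality for the convex function $F(\cdot,t)$, that converts a hypothetical gap at $x = 0$ into a uniform gap on a small interval $[0, x^*]$, which mass conservation cannot sustain.
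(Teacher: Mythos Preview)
Your proof is correct. The monotonicity argument is essentially the content of the paper's ``evidently $F(0,t)$ is increasing,'' and your characteristic non-crossing argument is the natural way to unpack that.

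For the limit $F(0,t)\to 1$, your route differs from the paper's. The paper performs the change of variables $z=F(x,t)$ in the conservation law to obtain
\[
\int_{F(0,t)}^1 \frac{w_0(z)}{\partial F(x,t)/\partial x}\,dz = e^{-t},
\]
and then bounds the Jacobian via the explicit formula (\ref{B2}), using $\phi'(x)\ge\phi'(1)$ and $-\kappa(s)\psi'(x)\ge 0$ to get $\partial F/\partial x\le e^{-t\phi'(1)}$; this yields the quantitative estimate $\int_{F(0,t)}^1 w_0(z)\,dz\le e^{-t(1+\phi'(1))}$, which forces $F(0,t)\to 1$ because $1+\phi'(1)>0$. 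You instead bound $F(x,t)$ directly by the chord from $(0,F(0,t))$ to $(1,1)$, which is cleaner and in fact does not use the hypothesis $\phi'(1)>-1$ from (\ref{D1}). The trade-off is that the paper's argument produces the exponential decay (\ref{D2}), and this quantitative form is invoked again later---see (\ref{AJ2}) in the proof of Lemma~2.5---so within the paper's overall architecture the Jacobian bound is not wasted effort.
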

\begin{proof}
Evidently $F(0,t)$ is an increasing function of $t$, whence $\lim_{t\ra\infty} F(0,t)=\al\le 1$. The conservation law (\ref{B1}) is equivalent to
\be \label{C2}
\int_{F(0,t)}^1 w_0(z)/[\pa F(x,t)/\pa x] \ dz \ = \ e^{-t} \ ,
\ee
where the variables $z$ and $x$ are related by $z=F(x,t)$. From (\ref{D1}), (\ref{E1}) and (\ref{B2}) we see that $\pa F(x,t)/\pa x \le \exp[-t\phi'(1)], \ 0\le x\le 1$, whence (\ref{C2}) implies that
\be \label{D2}
\int_{F(0,t)}^1 w_0(z)\ dz \ \le \ \exp[-t\{1+\phi'(1)\}] \ .
\ee
 We conclude from (\ref{D1}), (\ref{D2}) that $\al=1$. 
\end{proof}
\begin{lem}
Let $w(x,t), \ x,t\ge 0$, be the solution to (\ref{A1}), (\ref{B1}) with coefficients satisfying (\ref{D1}), (\ref{E1}). Assume the initial data $w(\cdot,0)$ has beta function $\beta(\cdot,0)$ satisfying 
(\ref{U1}) with $0<\beta_0<1$. Then there is a positive constant $C$ depending only on the initial data such that $\kappa(t)\ge C$ for all $t\ge  0$. 
\end{lem}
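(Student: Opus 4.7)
The plan is to express $\kappa(t)$ as a moment ratio of $c(x,t)=-\pa w/\pa x$. Integrating by parts in (\ref{F1}), using $\phi(0)=\phi(1)=\psi(1)=0$ and $w(1,t)=0$, gives
\begin{equation}
\kappa(t) \ = \ \frac{\int_0^1 [x+\phi(x)]\, c(x,t)\, dx}{\int_0^1 \psi(x)\, c(x,t)\, dx}.
\nonumber
\end{equation}
Since $\psi$ is decreasing with $\psi\le \psi(0)$ on $[0,1]$ and $\int_0^1 c(x,t)\,dx = w(0,t)$, the denominator is at most $\psi(0)w(0,t)$. Because $x+\phi(x)$ is strictly increasing from $0$ to $1$, with derivative $1+\phi'(x)\ge 1+\phi'(1)>0$, we have $x+\phi(x)\ge 1-\delta$ on $[1-\delta,1]$. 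So the lemma reduces to exhibiting $\delta,m>0$, depending only on the initial data, with
\begin{equation}
w(1-\delta,t)\ \ge \ m\, w(0,t) \quad {\rm for \ all \ } t\ge 0,
\nonumber
\end{equation}
since then $\int_{1-\delta}^1 c(x,t)\,dx = w(1-\delta,t)\ge m\, w(0,t)$, yielding $\kappa(t)\ge (1-\delta)m/\psi(0)$.

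To prove this ratio bound, use $w(x,t)=e^t w_0(F(x,t))$ to rewrite the ratio as $w_0(F(1-\delta,t))/w_0(F(0,t))$. Convexity of $F(\cdot,t)$ on $[0,1]$ combined with the fixed-point identity $F(1,t)=1$ yields, at the convex combination $1-\delta=(1-\delta)\cd 1+\delta\cd 0$,
\begin{equation}
1-F(1-\delta,t)\ \ge \ \delta\,(1-F(0,t)),
\nonumber
\end{equation}
while $\pa F/\pa x>0$ gives $F(0,t)\le F(1-\delta,t)<1$. By Lemma 2.1, $F(0,t)\ra 1$ as $t\ra \infty$, so for $t$ large both arguments lie in any prescribed neighborhood of $1$.

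Subcriticality enters through the auxiliary function $\Phi(x)=h_0(x)/w_0(x)$, which satisfies $\Phi'(x)=\beta(x,0)-1$. The hypothesis $\beta(x,0)\ra \beta_0\in (0,1)$ gives $\Phi(x)/(1-x)\ra 1-\beta_0>0$ as $x\ra 1$, so for any $\eta\in(0,1-\beta_0)$ there is $x_\eta<1$ with $(1-\beta_0-\eta)(1-x)\le \Phi(x)\le (1-\beta_0+\eta)(1-x)$ on $[x_\eta,1)$. Integrating $(\log h_0)'=-1/\Phi$ from $x=F(0,t)$ to $y=F(1-\delta,t)$ then produces a two-sided comparison $h_0(y)/h_0(x)\ge \bigl((1-y)/(1-x)\bigr)^{1/(1-\beta_0-\eta)}$, which combined with $w_0=h_0/\Phi$ gives a lower bound on $w_0(y)/w_0(x)$ in terms of $(1-y)/(1-x)\ge \delta$, uniform for all $t$ with $F(0,t)\ge x_\eta$. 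For the remaining bounded range of $t$, the bound follows from continuity and strict positivity of $w_0$ on $[0,1)$.

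The principal technical point is the use of the \emph{strict} inequality $\beta_0<1$: it is what forces $\Phi'(x)$ to be bounded away from zero near $x=1$, hence $\Phi(x)$ to be comparable to $1-x$ rather than merely being $o(1)$. This two-sided comparison is precisely what keeps the ratio $w_0(F(1-\delta,t))/w_0(F(0,t))$ bounded below as $F(0,t)\ra 1$; the corresponding step fails in the critical case $\beta_0=1$, which is why the argument of $\S4$ requires entirely different techniques.
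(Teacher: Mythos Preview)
Your proof is correct and self-contained, but it proceeds by a different route from the paper. Both arguments start from the moment representation $\kappa(t)=\int [x+\phi(x)]c\,dx\big/\int \psi(x)c\,dx$ and exploit the convexity of $F(\cdot,t)$, but they diverge from there. The paper bounds $\kappa(t)$ below by $\frac{1+\phi'(1)}{\psi(0)}\langle X_t\rangle$ and then controls $\langle X_t\rangle$ through the identity $\langle X_t\rangle=1-\int_0^1\beta(x,t)\,dx$ together with the monotonicity inequality $\beta(x,t)\le \beta(F(x,t),0)$ (a consequence of the convexity of $F(\cdot,t)$, quoted from \cite{c}); since $F(0,t)\to 1$, this forces $\sup_x\beta(x,t)\le (1+\beta_0)/2<1$ for $t$ large, whence $\langle X_t\rangle\ge (1-\beta_0)/2$. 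Your approach instead reduces to the tail ratio $w_0(F(1-\delta,t))/w_0(F(0,t))$, uses convexity of $F$ in the direct form $1-F(1-\delta,t)\ge \delta(1-F(0,t))$, and then invokes the regular-variation estimate for $w_0$ near $1$ derived from $\Phi(x)=h_0/w_0\sim(1-\beta_0)(1-x)$. The paper's route is shorter and stays entirely within the beta-function formalism it is developing; yours is more explicit about the polynomial behavior of $w_0$ near the endpoint and avoids the external reference to \cite{c}, at the price of a slightly longer computation.
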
 
\begin{proof} 
Setting $c(x,t)=-\pa w(x,t)/\pa x\ge 0, \ 0\le x \le 1$, and $X_t$ to be the random variable with probability density function  $c(x,t)/w(0,t), \ 0\le x \le 1$, we see from (\ref{F1}) that $\kappa(t)$ satisfies the inequality 
\be \label{E2}
\kappa(t) \ \ge \  \frac{1+\phi'(1)}{\psi(0)} \langle \  X_t \ \rangle \ ,
\ee
where $\langle\cdot\rangle$ denotes expectation value. We assume that  $\beta_0=\lim_{x\ra 1}\beta(x,0)<1$.  Since the function $x\ra F(x,t), \ 0\le x<1,$ is convex,  it follows from the inequality (57) of  \cite{c} that $\beta(x,t)\le \beta(F(x,t),0)$ for $0\le x<1$.   Hence Lemma 2.1 implies that there exists $T>0$ depending only on the initial data,  such that $\beta(x,t) \le (1+\beta_0)/2, \ 0\le x < 1, \ t\ge T$.  Now for a positive random variable $X$ which has beta function $\beta(\cdot)$ and satisfies $\|X\|_\infty<\infty$, one  finds after integration by parts,
\be \label{F2}
\langle \ X \ \rangle  \ =  \ \|X\|_\infty-\int_0^{\|X\|_\infty} \beta(z) \  dz \ .
\ee
Applying (\ref{F2}) to  the variable $X_t$ with $t\ge T$, and using the fact that $\|X_t\|_\infty=1$, we conclude that $\langle \ X_t \  \rangle \ \ge \ (1-\beta_0)/2 $ provided $t\ge T$.  The result follows by observing that $\kappa(t)$ is a continuous strictly positive function of $t$ for $t\ge 0$.
\end{proof}
To obtain an upper bound on $\kappa(\cdot)$ we first obtain an alternative formula to (\ref{F1}) for $\kappa(t)$.  Observing that the function $c(\cdot,t)$ of (\ref{N1}) satisfies $c(x,t)=-\pa w(x,t)/\pa x\ge 0$, we see that $c(x,t)$ satisfies the equation
\be \label{G2}
 \frac{\pa c(x,t)}{\pa t}+ \frac{\pa }{\pa x} \left\{\left[\phi(x)-\kappa(t)\psi(x)\right] c(x,t)\right\}=c(x,t) \ .
 \ee
Hence we obtain a formula for $\kappa(t)$ equivalent to (\ref{F1}),
\be \label{N2}
\kappa(t)= \frac{\int_0^1 [x+\phi(x)] c(x,t) \  dx}{ \int_0^1 \psi(x)c(x,t) \  dx}=\frac{\langle \ X_t+\phi(X_t) \ \rangle}{\langle \ \psi(X_t) \  \rangle} \  .
\ee
\begin{lem} Let $X$ be a positive random variable such that $\|X\|_\infty=1$,  and set $\kappa(X)=\langle \ X+\phi(X) \ \rangle/\langle \ \psi(X) \ \rangle$ where $\phi(\cdot), \ \psi(\cdot)$ satisfy (\ref{D1}), (\ref{E1}).  Then for any $\del, \ 0<\del<1$,  there are positive constants $C_1(\del), \ C_2(\del)$ with the property $\lim_{\del\ra 0}C_1(\del)=\infty$ and  $\lim_{\del\ra 1}C_2(\del)=0$,   such that
\begin{eqnarray} \label{O2}
1-\langle X  \rangle  \ &\le& \ \del \quad {\rm implies} \quad \kappa(X) \ge C_1(\del), 
\\
1-\langle  X  \rangle  \ &\ge& \ \del \quad {\rm implies} \quad \kappa(X) \le C_2(\del). \label{P2}
\end{eqnarray} 
\end{lem}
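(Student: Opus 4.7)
The plan is to exploit the concavity of $\phi$ and the convexity of $\psi$, together with the endpoint conditions $\phi(0)=\phi(1)=0$ and $\psi(1)=0$ from (\ref{D1}), (\ref{E1}), via chord/tangent estimates combined with Jensen's inequality applied to the random variable $X$ taking values in $[0,1]$.

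For (\ref{O2}), when $\langle X\rangle$ is close to $1$ (so $\langle 1-X\rangle\le\delta$), I would bound the denominator of $\kappa(X)$ from above by using convexity: since $\psi$ is convex on $[0,1]$ with $\psi(1)=0$, $\psi$ lies below the chord joining $(0,\psi(0))$ and $(1,0)$, giving $\psi(x)\le \psi(0)(1-x)$. Hence $\langle \psi(X)\rangle\le \psi(0)\langle 1-X\rangle\le \psi(0)\delta$. For the numerator I would use that $\phi\ge 0$ on $[0,1]$ (an immediate consequence of concavity together with $\phi(0)=\phi(1)=0$), so $\langle X+\phi(X)\rangle\ge \langle X\rangle\ge 1-\delta$. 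Dividing gives $\kappa(X)\ge (1-\delta)/[\psi(0)\delta]=:C_1(\delta)$, which manifestly tends to $+\infty$ as $\delta\to 0$.

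For (\ref{P2}), when $\langle X\rangle\le 1-\delta$, I would run Jensen's inequality in the opposite directions. Since $\phi$ is concave, $\langle\phi(X)\rangle\le \phi(\langle X\rangle)$. The function $g(y)=y+\phi(y)$ has derivative $1+\phi'(y)$ which is strictly positive on $(0,1]$, because $\phi'$ is nonincreasing by concavity and $\phi'(1)>-1$ by (\ref{D1}); hence $g$ is strictly increasing on $[0,1]$, and $g(\langle X\rangle)\le g(1-\delta)$. Similarly, convexity of $\psi$ gives $\langle\psi(X)\rangle\ge \psi(\langle X\rangle)$, and $\psi$ is strictly decreasing on $[0,1]$ (because $\psi'$ is nondecreasing and $\psi'(1)<0$), so $\psi(\langle X\rangle)\ge \psi(1-\delta)$. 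Combining yields
\[
\kappa(X)\le \frac{(1-\delta)+\phi(1-\delta)}{\psi(1-\delta)}=:C_2(\delta),
\]
and as $\delta\to 1$ the numerator tends to $0+\phi(0)=0$ while the denominator tends to $\psi(0)>0$, so $C_2(\delta)\to 0$. The argument requires no quantitative information beyond (\ref{D1}), (\ref{E1}); the only mildly delicate point is the monotonicity of $g$ near the origin, for which $\phi'(1)>-1$ combined with concavity of $\phi$ is precisely the ingredient required, so no genuine obstacle is anticipated.
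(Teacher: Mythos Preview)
Your proof is correct. For (\ref{P2}) you follow essentially the same route as the paper---Jensen's inequality applied to both $\phi$ and $\psi$; your observation that $y\mapsto y+\phi(y)$ is increasing (via $\phi'(y)\ge\phi'(1)>-1$) is a mild sharpening of the paper's bound $\langle X\rangle+\phi(\langle X\rangle)\le 1-\delta+\sup_{[0,1-\delta]}\phi$, but the idea is identical. For (\ref{O2}) your argument is genuinely more direct: you exploit the chord inequality $\psi(x)\le\psi(0)(1-x)$ (an immediate consequence of convexity and $\psi(1)=0$) to get $\langle\psi(X)\rangle\le\psi(0)[1-\langle X\rangle]$ in one step, whereas the paper splits the expectation at a level $\eta$, bounds the two pieces by $\psi(0)P(X<\eta)$ and $|\psi'(\eta)|[1-\langle X\rangle]$ respectively, and then controls $P(X<\eta)$ by a Markov-type inequality. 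Both approaches yield a bound of the shape $\kappa(X)\ge C\langle X\rangle/[1-\langle X\rangle]$ (this is the inequality (\ref{RR2}) used later in the paper); yours delivers the explicit constant $C=1/\psi(0)$ with less effort, while the paper's splitting argument is more flexible if one ever needed to avoid assuming global convexity of $\psi$ down to $x=0$.
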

\begin{proof}   We see from (\ref{E1}) that for any $\eta>0$,
\be \label{Q2}
\langle \ \psi(X) \ \rangle \  \le \ \psi(0)P(X<\eta) +|\psi'(\eta)|\left[1-\langle  X \rangle\right] \ .
\ee
Combining (\ref{Q2}) with the inequality
\be \label{R2}
P(X<\eta) \  \le \ [1-\langle X\rangle]/(1-\eta) \ , \quad 0<\eta<1,
\ee
we conclude that there is a constant $C>0$ depending only on  $\psi(\cdot)$  such that
\be \label{RR2}
\kappa(X) \ \ge  \ C\langle X\rangle/[1-\langle X\rangle] \ .
\ee
This proves  (\ref{O2}).

To prove (\ref{P2}) observe that by Jensen's inequality,  $\langle \ \psi(X) \ \rangle \ \ge \ \psi(\langle X\rangle) \ \ge \ \psi(1-\del)>0$ and $\langle  \ X+\phi(X) \ \rangle \ \le  \ \langle  X\rangle+\phi(\langle X\rangle) \ \le1- \del+\sup_{0\le x\le 1-\del}\phi(x)$. Now (\ref{P2}) and  $\lim_{\del\ra 1}C_2(\del)=0$ follows from the continuity of $\phi(\cdot)$ and the fact that $\phi(0)=0, \ \psi(0)>0$.
\end{proof}
\begin{lem} Let $w(x,t), \ x,t\ge 0$, be the solution to (\ref{A1}), (\ref{B1}) with coefficients satisfying (\ref{D1}), (\ref{E1}), (\ref{O1}). Assume the initial data $w(\cdot,0)$ has beta function $\beta(\cdot,0)$ satisfying  (\ref{U1}) with $0<\beta_0<1$.  Then there is a positive constant $C$  depending only on the initial data such that $\kappa(t)\le C$ for all $t\ge  0$.
\end{lem}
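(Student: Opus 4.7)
The strategy is to deduce the upper bound from the alternative expression (\ref{N2}) together with part (\ref{P2}) of Lemma 2.3. Integration by parts in (\ref{B1}), using $w(1,t)=0$, gives $\int_0^1 x c(x,t)\,dx = h(0,t)=1$, so $\langle X_t\rangle = 1/w(0,t)$, and hence $1-\langle X_t\rangle\ge\del$ is equivalent to $w(0,t)\ge 1/(1-\del)$. It therefore suffices to establish a uniform lower bound of the form $w(0,t)\ge 1+\ve$ for $t\ge 0$, with $\ve>0$ depending only on the initial data.

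To achieve this I would return to the beta function machinery. In Lemma 2.2 the half-inequality $\beta(x,t)\le\beta(F(x,t),0)$ from (57) of \cite{c}, combined with Lemma 2.1, gave an upper bound on $\beta$ and hence a lower bound on $\kappa$. For the present statement the opposite direction is required: a uniform-in-$t$ lower bound $\beta(x,t)\ge\beta_*>0$ on some interval $[1-\eta,1)$ with $\eta>0$ independent of $t$. This is where the new hypothesis (\ref{O1}) should enter. Differentiating $\beta(x(t),t)$ along the characteristic (\ref{A2}) yields an ODE of the form
\[
\frac{d}{dt}\beta(x(t),t) = \beta(x(t),t)\bigl[1-\beta(x(t),t)\bigr]\bigl[\phi'(x(t))-\kappa(t)\psi'(x(t))\bigr] + R(x(t),t),
\]
and the sign conditions $\phi'''\ge 0$ and $\psi'''\le 0$ should give $R$ a sign compatible with propagating a lower bound. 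Since the characteristic $x=1$ is fixed ($\phi(1)=\psi(1)=0$) and the boundary value $\beta(1^-,t)=\beta_0$ is conserved, this should yield $\beta(x,t)\ge\underline{\beta}(F(x,t),0)$ for a suitable lower profile, whence the required uniform lower bound near $x=1$ via Lemma 2.1.

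A uniform lower bound $\beta\ge\beta_*$ on $[1-\eta,1)$, together with the identity $c h=\beta w^2$ in (\ref{M1}), controls $c$ quantitatively from below in terms of $w$ there; combined with the monotonicity and positivity of $w(\cdot,t)$ and the constraint $h(0,t)=1$, this forces $w(0,t)\ge 1+\ve$ uniformly for $t\ge T_0$ for some $T_0$. The remaining finite interval $t\in[0,T_0]$ is handled by continuity and the strict positivity of $\kappa(t)$, as in the last step of Lemma 2.2. The crux of the argument, and the main obstacle, is the propagation step for $\beta$: because the characteristic flow contracts every interior point toward $x=1$, one has only a thin strip in which to operate, and the sign hypotheses of (\ref{O1}) are exactly what is required to sign the error term $R$ and ensure that the lower bound survives uniformly in $t$.
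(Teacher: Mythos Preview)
Your reduction in the first paragraph is correct: by (\ref{F2}) one has $1-\langle X_t\rangle=\int_0^1\beta(x,t)\,dx$, so via (\ref{P2}) it suffices to produce a uniform positive lower bound on $\int_0^1\beta(\cdot,t)$. The difficulty is entirely in the propagation step, and here your outline has a genuine gap.

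First, the ODE you write for $\beta$ along characteristics is not the one that actually holds. From (\ref{J2}) one has simply
\[
\frac{d}{dt}\log\beta(x(t),t)=-g(x(t),t),
\]
with $g\ge 0$ given by (\ref{K2}); there is no leading term of the form $\beta(1-\beta)[\phi'-\kappa\psi']$ plus remainder. Because $g\ge 0$, the free inequality is $\beta(x,t)\le\beta(F(x,t),0)$, exactly the one used in Lemma 2.2. A lower bound requires controlling $\int_0^t g(x(s),s)\,ds$ uniformly in $t$, and this is where the real work lies. You cannot do it directly: bounding $g$ along a characteristic requires control of the characteristic, which in turn requires a bound on $\kappa$---precisely what you are trying to prove. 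Your sketch does not break this circularity.

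The paper's argument resolves it by a bootstrap. One first shows, using (\ref{O1}) together with the condition $\sup\beta(\cdot,t)\le 1$ (available after a finite time by Lemma 2.1), that $g(\cdot,t)$ is \emph{decreasing} in $x$; this is not a sign on a remainder but a monotonicity statement obtained via a log-concavity argument (see (\ref{M2})--(\ref{CA2})). Then one argues by contradiction on the quantity $\int_0^1\beta(\cdot,t)$: if it drops below a small threshold $\delta_0$, Lemma 2.3 part (\ref{O2}) forces $\kappa(t)>\kappa_0$, and in that regime the backward characteristics contract exponentially toward $x=1$ (inequality (\ref{W2})). Combined with the decay of $g$ near $x=1$ (inequality (\ref{Z2})) and its monotonicity, this yields a uniform bound on $\int g\,ds$ and hence a lower bound $\beta(x,t)\ge C\beta(x(T_1),T_1)$, preventing $\int_0^1\beta$ from dropping further. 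So the role of (\ref{O1}) and the mechanism for propagating the lower bound are both different from what you describe; without the bootstrap through (\ref{O2}) the argument does not close.
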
 
\begin{proof}
From (\ref{N1}) we see that $h(x,t)$ satisfies $w(x,t)=-\pa h(x,t)/\pa x\ge 0, \ \lim_{x\ra 1}h(x,t)=0$,  whence it follows that $h(x,t)$ is a solution to  the equation
 \be \label{H2}
\frac{\pa h(x,t)}{\pa t}+ \left[\phi(x)-\kappa(t)\psi(x)\right] \frac{\pa h(x,t)}{\pa x}= \int_x^1 [\phi'(z)-\kappa(t)\psi'(z)]w(z,t) dz + h(x,t) \ .
\ee
We conclude then from (\ref{A1}), (\ref{G2}), (\ref{H2}), that the function $\beta(x,t)$ of (\ref{M1})  is a solution to
\be  \label{J2}
\frac{\pa}{\pa t} \log\beta(x,t)+ \left[\phi(x)-\kappa(t)\psi(x)\right] \frac{\pa}{\pa x} \log\beta(x,t) \ = \ -g(x,t) \ ,
\ee
 where the function $g(x,t)$ is given by the formula
 \be \label{K2}
g(x,t) \ = \ \{\phi'(x)-\kappa(t)\psi'(x)\}-\frac{1}{h(x,t)}\int_x^1 [\phi'(z)-\kappa(t)\psi'(z)]w(z,t) dz \ .
\ee  
It follows from (\ref{D1}), (\ref{E1}) and the non-negativity of $\kappa(\cdot)$ that $g(\cdot,\cdot)$ is a non-negative function and $\lim_{x\ra 1} g(x,t)=0$. From (\ref{K2}) we also have that
\be \label{L2}
\frac{\pa g(x,t)}{\pa x} \ = \  \{\phi''(x)-\kappa(t)\psi''(x)\}-\frac{w(x,t)}{h(x,t)^2}\int_x^1 [\phi''(z)-\kappa(t)\psi''(z)]h(z,t) dz \ .
\ee
Assuming now that $-\phi''(\cdot), \ \psi''(\cdot)$ are decreasing, it follows from (\ref{L2}) that 
\be \label{M2}
\frac{\pa g(x,t)}{\pa x} \ \le \  \{\phi''(x)-\kappa(t)\psi''(x)\}\left[ 1-\frac{w(x,t)}{h(x,t)^2}\int_x^1 h(z,t) dz\right] \ .
\ee
Note that the expression in square brackets on the RHS of (\ref{M2}) is $1$ minus the beta function of the convolution of $h(\cdot,t)$ with the function $H:\R\ra\R$ defined by $H(z)=0, \ z>0;  \ \ H(z)=1, \ z\le 0$. We observed in \cite{c} that if $\beta(\cdot)$ is the beta function associated with a function $h(\cdot)$ by (24) of \cite{c}, then the condition  $\sup\beta(\cdot)\le 1$ is equivalent to the condition that  $h(\cdot)$ is log-concave. Since the function $H(\cdot)$ is  log-concave, the Pr\'{e}kopa-Leindler inequality \cite{v} implies that if $\sup\beta(\cdot,t)\le 1$  then the convolution $h(\cdot,t)*H$ is also log-concave.  It follows that if $\sup\beta(\cdot,t)\le 1$,  then the expression in the square brackets on the RHS of (\ref{M2}) is non-negative.  We can see this directly by writing $h(x,t)=\exp[-q(x,t)], \ 0\le x<1,$ where the function $x\ra q(x,t)$ is increasing and convex with $\lim_{x\ra 1} q(x,t)=\infty$.  Then
\begin{multline} \label{CA2}
\frac{w(x,t)}{h(x,t)^2}\int_x^1 h(z,t) dz \ = \  \exp[q(x,t)]\frac{\pa q(x,t)}{\pa x} \int_x^1 \exp[-q(z,t)] \ dz
\\ \le  \exp[q(x,t)]\int_x^1\frac{\pa q(z,t)}{\pa z}  \exp[-q(z,t)] \ dz \ = \ 1.
\end{multline}
We conclude from (\ref{M2}), (\ref{CA2}) that if  $\sup\beta(\cdot,t)\le 1$ then $g(x,t)$ is a decreasing function of $x$ with $\lim_{x\ra 1} g(x,t)=0$. 

From Lemma 2.1 we see that there is a $T_0\ge 0$ such that $\sup\beta(\cdot,t)\le 1$ for $t\ge T_0$ and $\inf\beta(\cdot,T_0)=\beta_0>0$. Next let $\del_0>0$ have the property that the constant $C_1(\del)$ in Lemma 2.3 satisfies $C_1(\del_0)>\kappa_0=\phi'(1)/\psi'(1)$. Suppose now that 
\be \label{S2}
\int_0^1\beta(x,t) \ dx \  \le \ \del_0
\ee
for $t$ in the interval $T_1\le t\le T_2$, where $T_1\ge T_0$ and there is equality in (\ref{S2}) when $t=T_1$. We show that in this case there is a $\del_1>0$ such that 
\be \label{T2}
\int_0^1\beta(x,t) \ dx \  \ge \ \del_1, \quad T_1\le t\le T_2 \ .
\ee
The result follows from (\ref{T2}) and Lemma 2.3.

To prove (\ref{T2}) we use the fact that for $t\ge T_1$ one has
\be \label{U2}
\beta(x,t) \ = \ \exp\left[ - \int_{T_1}^t g(x(s),s) \ ds\right] \  \beta(x(T_1),T_1)\ ,
\ee
where $x(s), \ s\le t$, is the solution of (\ref{A2}) with terminal condition $x(t)=x$. Observe next that since $\kappa(s)\ge C_1(\del_0)>\kappa_0$ for $T_1\le s\le T_2$,  one has
\be \label{V2}
\kappa(s)\psi(z)-\phi(z) \  \ge \ [\kappa(s)-\kappa_0]|\psi'(1)|(1-z)>0, \quad T_1\le s  \le T_2, \ 0<z<1.
\ee
We conclude that
\be \label{W2}
[1-x(s)] \ \le \  [1-x]\exp\left\{-\int_s^t[\kappa(s'))-\kappa_0]|\psi'(1)|ds'\right\} \quad T_1\le s\le t\le T_2 \ .
\ee 
Observe now that for  any $s, \  T_1\le s  \le T_2,$ the function $\phi'(z)-\kappa(s)\psi'(z)$ is a positive decreasing  function of $z, \ 0<z<1$ and the function $g(\cdot,s)$ of (\ref{K2}) satisfies the inequality
\be \label{X2}
0 \ \le \ g(z,s) \  \le \  \phi'(z)-\kappa(s)\psi'(z), \quad T_1\le s  \le T_2, \ 0<z<1.
\ee
It follows from (\ref{W2}), (\ref{X2}) that
\be \label{Y2}
0 \ \le \  \int_{T_1\vee (t-1)}^{t} g(x(s),s) \ ds \  \le  C_3(\del_0), \quad  T_1\le t\le T_2,
\ee
for a constant $C_3(\del_0)$ depending only on $\del_0$. From (\ref{L2}) and the fact that  $-\phi''(\cdot), \ \psi''(\cdot)$ are decreasing we see that for any $x_1>0$, 
\be \label{Z2}
0 \ \le g(z,s) \ \le [\kappa(s)\psi''(x_1)-\phi''(x_1)](1-z), \quad x_1\le z \le 1, \ s\ge T_0.
\ee
Hence if $T_1<t<T_2$ then we have the inequality
\begin{multline} \label{AA2}
 \int_{T_1}^{T_1\vee(t-1)} g(x(s),s) \ ds \ \le \\
  \int_{T_1}^{T_1\vee(t-1)} ds \  [\kappa(s)\psi''(x_1)-\phi''(x_1)](1-x_1)
 \exp\left\{-\int_{s}^{T_1\vee(t-1)} [\kappa(s')-\kappa_0]|\psi'(1)| \ ds' \right\} \ ,
\end{multline}
where $x(t-1)=x_1\ge C_4(\del_0)$ for a positive constant $C_4(\del_0)$ depending only on $\del_0$.  It follows from (\ref{Y2}), (\ref{AA2}) that there is a constant $C_5(\del_0)$ depending only on $\del_0$ such that
\be \label{AB2}
0 \ \le \  \int_{T_1}^{t} g(x(s),s) \ ds \  \le \ C_5(\del_0), \quad T_1\le t\le T_2 .
\ee
We conclude then from (\ref{U2}) that there is a constant $C_6(\del_0)$ depending only on $\del_0$ such that
\be \label{AC2}
\beta(x,t) \  \ge \ C_6(\del_0) \  \beta(x(T_1),T_1) , \quad T_1\le t\le T_2.
\ee
In view of the monotonicity of the function $g(\cdot,s)$ for $s\ge T_0$ we also have that
\be \label{AD2}
\beta(z,s) \ \ge (1-\gamma) \beta(x,s), \quad s\ge T_0, \ z\ge x,
\ee
for some constant $\gamma<1$. Since $x(T_1)\ge x$ in (\ref{AC2}) we conclude from (\ref{AD2}) that
(\ref{T2}) holds.
\end{proof} 
\begin{lem} Under the conditions of Lemma 2.4  the limit (\ref{J1}) holds.
\end{lem}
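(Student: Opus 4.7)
The plan is to extract (\ref{J1}) from the behavior of the leading edge coefficient of $w(\cdot,t)$ near $x=1$, which is controlled by the time-averaged value of $\kappa$. Since $0<\beta_0<1$, the hypothesis (\ref{U1}) forces $w_0$ to be regularly varying near $y=1$ with exponent $p=\beta_0/(1-\beta_0)$; for simplicity assume $w_0(y)\sim C_0(1-y)^p$ as $y\to 1$. The backward characteristic through $(1,t)$ is the constant trajectory $x(s)\equiv 1$, so (\ref{B2}) specializes to $F'(1,t)=\exp[-|\psi'(1)|\int_0^t(\kappa(s)-\kappa_0)\,ds]$, using $\phi'(1)-\kappa\psi'(1)=(\kappa-\kappa_0)|\psi'(1)|$. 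Applying L'H\^opital to the identity $w(x,t)=e^t w_0(F(x,t))$ then yields the edge coefficient
$$A(t)\DEF\lim_{x\to 1}\frac{w(x,t)}{(1-x)^p}=C_0\exp\Bigl[\,t-p|\psi'(1)|\int_0^t(\kappa(s)-\kappa_0)\,ds\,\Bigr].$$

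The key analytic step is to show that $A(t)$ is bounded above and below by positive constants. From (\ref{F2}) and the bound $\beta(x,t)\le(1+\beta_0)/2$ established in the proof of Lemma 2.2, $\langle X_t\rangle\ge(1-\beta_0)/2$ for $t\ge T_0$; since $\langle X_t\rangle=1/w(0,t)$ (a consequence of $\int_0^1 w=1$ after integration by parts), this provides $1\le w(0,t)\le 2/(1-\beta_0)$ uniformly, the lower bound coming from monotonicity of $w(\cdot,t)$. For the backward characteristic terminating at $(0,t)$, set $y(s)=1-x(s)$; integrating the linearization $d\log y/ds=(\kappa(s)-\kappa_0)|\psi'(1)|+O(y)$ across the near-edge regime, together with a bounded-duration exit layer during which $y$ climbs from some fixed $\delta$ up to $1$ under the nondegenerate drift secured by the two-sided bounds $C_1\le\kappa(t)\le C_2$ of Lemmas 2.2 and 2.4, yields
$$\int_0^t(\kappa(s)-\kappa_0)|\psi'(1)|\,ds=\log\frac{1}{1-F(0,t)}+O(1),\qquad\text{so}\qquad\frac{1-F(0,t)}{F'(1,t)}=e^{O(1)}$$
uniformly in $t$. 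Consequently $w(0,t)=e^t C_0(1-F(0,t))^p(1+o(1))=e^{O(1)}A(t)$, so $A(t)$ is trapped between two positive constants.

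Taking logarithms in the formula for $A(T)$ and dividing by $T$,
$$\frac{\log(A(T)/C_0)}{T}=1-p|\psi'(1)|(\bar\kappa_T-\kappa_0),\qquad\bar\kappa_T\DEF\frac{1}{T}\int_0^T\kappa(s)\,ds.$$
Since $\log A(T)=O(1)$ by the previous step, the left-hand side tends to $0$, yielding $\bar\kappa_T\to\kappa_0+1/(p|\psi'(1)|)$. Substituting $\kappa_0=\phi'(1)/\psi'(1)$, $1/p=1/\beta_0-1$, and $|\psi'(1)|=-\psi'(1)$ rearranges this to $[1/\beta_0-\phi'(1)-1]/|\psi'(1)|$, which is (\ref{J1}). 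The principal obstacle is the uniform $O(1)$ estimate in the middle paragraph: one must show that the backward characteristic from $(0,t)$ spends only a $t$-independent amount of time outside a small neighborhood of $x=1$, which relies crucially on both the lower bound (Lemma 2.2) and the upper bound (Lemma 2.4) on $\kappa$ to keep the drift $\phi-\kappa\psi$ nondegenerate in the exit layer. A minor secondary point is that if $w_0$ is only regularly varying (not a pure power), the slowly-varying factor alters the exact identity for $A(T)$ but contributes only $o(T)$ to $\log A(T)$, and hence does not affect the time-averaged limit.
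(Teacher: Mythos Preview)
Your argument is essentially the paper's, repackaged around the edge coefficient $A(t)$. Both proofs rest on the same three ingredients: the uniform bound $1\le w(0,t)\le 2/(1-\beta_0)$ coming from $\sup\beta(\cdot,t)\le(1+\beta_0)/2$; the convexity estimate $1-F(0,t)\le \partial F(1,t)/\partial x=\exp[-\phi'(1)t+\psi'(1)\int_0^t\kappa]$, which is the paper's (\ref{AH2}); and the reverse inequality, obtained because the backward characteristic from $(0,t)$ reaches any fixed neighborhood of $x=1$ in bounded time and then stays there exponentially---the paper's (\ref{AJ2})--(\ref{AL2}) and your ``bounded exit layer''. Your formula $\log A(T)=T-p|\psi'(1)|\int_0^T(\kappa-\kappa_0)$ is exactly what one gets by combining the paper's (\ref{AF2}), (\ref{AG2}) with (\ref{AH2}) and (\ref{AL2}) and then dividing by $T$.

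One point deserves more care than you give it. Under the bare hypothesis (\ref{U1}) the initial datum is only regularly varying, so $A(t)=\lim_{x\to1}w(x,t)/(1-x)^p$ need not exist as a limit; your device is well-defined only under the extra assumption $w_0(y)\sim C_0(1-y)^p$. The paper avoids this by working instead with two-sided power bounds (\ref{AE2}) having exponents $(\beta_0\pm\eta)/(1-\beta_0\mp\eta)$, deriving $\limsup$ and $\liminf$ statements (\ref{AI2}), (\ref{AM2}) for each $\eta>0$, and letting $\eta\downarrow0$. Your closing remark that the slowly varying factor contributes $o(T)$ to $\log A(T)$ is morally correct---since $1-F(0,t)$ decays at least exponentially and slowly varying functions have logarithms that are $o(\log(1/z))$---but this is precisely what the paper's $\eta$-argument makes rigorous, and it is not quite the ``minor secondary point'' you label it: without it the quantity $A(t)$ on which your whole organization rests is undefined.
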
 
\begin{proof} It follows from (\ref{F2}) that if $X$ is a positive random variable with $\|X\|_\infty=1$ and beta function $\beta(\cdot)$ satisfying $\|\beta(\cdot)\|_\infty<1$ then $\langle X\rangle \ \ge \  1-\|\beta(\cdot)\|_\infty$.
As in Lemma 2.2 there exists $T_0\ge 0$ such that $\sup\beta(\cdot,t)\le (1+\beta_0)/2, \ t\ge T_0$.  Hence for $t\ge T_0$ there is the inequality $1\le w(0,t)\le 2/(1-\beta_0)$. Next for $0<\eta<\min[\beta_0/2,(1-\beta_0)/2]$ let $\ve(\eta)$ be such that $|\beta(x,T_0)-\beta_0|<\eta$ provided $1-x<\ve(\eta)$.  Then from  Lemma 1 of \cite{c} we see that there are constants $C_1(\eta), \ C_2(\eta)$ depending only on $\eta$ and $w(\cdot,0)$ such that
\be \label{AE2}
C_1(\eta) [1-x]^{(\beta_0+\eta)/(1-\beta_0-\eta)}   \      \le \ w(x,T_0)/w(0,T_0) \ \le \ C_2(\eta) [1-x]^{(\beta_0-\eta)/(1-\beta_0+\eta)}
\ee
provided $1-x<\ve(\eta)$.
Assuming now wlog that $T_0=0$, we see from  Lemma 2.1 that there exists $T_\eta\ge  0$ such that $1-F(0,t)<\ve(\eta)$ provided $t\ge T_\eta$. We conclude then from (\ref{AE2}) and the bound on $w(0,t)$ when $t\ge T_0$ the inequalities
\begin{eqnarray} \label{AF2}
w(0,0) C_2(\eta) e^t  [1-F(0,t)]^{(\beta_0-\eta)/(1-\beta_0+\eta)} \ & \ge & \ 1,   \quad t\ge T_\eta, \\
\label{AG2}
w(0,0)C_1(\eta) e^t [1-F(0,t)]^{(\beta_0+\eta)/(1-\beta_0-\eta)}  \ & \le& \ 2/(1-\beta_0) \ , \quad t\ge T_\eta.
\end{eqnarray}
Observe next from (\ref{B2}) using the convexity of the function $F(\cdot,t)$, that
\be \label{AH2}
1-F(0,t) \ \le \ \exp\left[ -\phi'(1)t+\psi'(1)\int_0^t\kappa(s) ds\right] \ .
\ee
Now (\ref{AF2}) and (\ref{AH2}) imply that
\be \label{AI2}
\limsup_{T\ra \infty}\frac{1}{T}\int_0^T \kappa(s) ds \ \le \  [1/\beta_0-\phi'(1)-1]/|\psi'(1)| \ .
\ee

In order to prove a lower bound on the time average of $\kappa(\cdot)$ analogous to (\ref{AI2}), we observe as in (\ref{D2}) that the solution $x(s), \ s\le t$, of (\ref{A2}) with terminal condition $x(t)=0$ satisfies the inequality
\be \label{AJ2}
\int_{x(t-\tau)}^1 w(z,t-\tau)\ dz \ \le \ \exp[-\tau\{1+\phi'(1)\}] \ , \quad 0<\tau<t.
\ee
We can also see as in (\ref{AE2}) that
\be \label{AK2}
w(z,s)/w(0,s) \  \ge \ C(1-z)^{(1+\beta_0)/(1-\beta_0)}, \quad 0<z<1, \ s\ge T_0,
\ee 
where the constant $C$ depends only on $\beta_0$. It follows then from (\ref{AJ2}), (\ref{AK2}) that there are positive  constants $C,\gamma$ depending only on $\beta_0$ such that
\be \label{AL2}
1-x(t-\tau) \  \le Ce^{-\gamma \tau}, \quad   t>T_0,  \ \tau<t-T_0.
\ee
If we use now (\ref{B2}), (\ref{AG2}) and (\ref{AL2}) we conclude the lower bound
\be \label{AM2}
\liminf_{T\ra \infty}\frac{1}{T}\int_0^T \kappa(s) ds \ \ge \  [1/\beta_0-\phi'(1)-1]/|\psi'(1)| \ .
\ee
\end{proof}
\begin{proof}[Proof of Theorem 1.1] This follows from Lemma 2.2, 2.4, 2.5.
\end{proof}

\section{The Quadratic Model}
We have already observed that the solution $w(x,t)$ of (\ref{A1}) is given by $w(x,t)=e^tw_0(F(x,t))$ where $F(x,t)$ is defined by (\ref{A2}). It follows from (\ref{A1}) that $F(x,t)$ is the solution to the initial value problem
\begin{eqnarray} \label{A3}
\frac{\pa F(x,t)}{\pa t}+ \left[\phi(x)-\kappa(t)\psi(x)\right] \frac{\pa F(x,t)}{\pa x}&=&0, \quad 0\le x<1, \ t\ge 0,
\\
F(x,0) &=& x, \quad 0\le x<1. \nonumber
\end{eqnarray}
Now suppose $ \phi(\cdot), \ \psi(\cdot)$ are quadratic and satisfy (\ref{D1}), (\ref{E1}). Then  $ \phi(\cdot), \ \psi(\cdot)$ are given by the formulas,
\be \label{B3}
\phi(x)=\phi'(1)x(x-1), \quad \psi(x)=\psi'(1)(x-1)+\psi''(1)(x-1)^2/2,
\ee
whence $\phi(\cdot), \ \psi(\cdot)$ are determined by the three parameters $\phi'(1), \psi'(1), \ \psi''(1)$, which are subject to the constraints in  (\ref{D1}), (\ref{E1}). For $t\ge 0$ let $u(t)$ be the function
\be \label{C3}
u(t)= \exp\left[\int_0^t \{\phi'(1)-\psi'(1)\kappa(s)\} \  ds \right] \ .
\ee
Then it is easy to see that if the function $v(t)$ is the solution to the initial value problem
\be \label{D3}
\frac{dv(t)}{dt}=u(t), \ t\ge 0, \quad v(0)=0,
\ee
the solution to (\ref{A3}) is given by the formula
\be \label{E3}
1-F(x,t)=\frac{1-x}{u(t)+a(t)(1-x)} \ , \quad 0\le x<1, \ t\ge 0, 
\ee
where $a(\cdot)$ is given in terms of $u(\cdot), \ v(\cdot)$ by the formula
\be \label{F3}
a(\cdot)=\{\psi''(1)[u(\cdot)-1]+ |\phi'(1)|[\psi''(1)-2\psi'(1)]v(\cdot)\}/2|\psi'(1)| \ .
\ee
Using the identity
\be \label{FF3}
|\phi'(1)|v(t) \ = \ 1-u(t) +|\psi'(1)|\int_0^t \kappa(s) u(s) \ ds \ , 
\ee
we see that $u(t)-1+|\phi'(1)|v(t)\ge 0$ for all $t$ since the function $\kappa(\cdot)$ is non-negative. Hence the function $a(\cdot)$ in (\ref{F3}) is strictly positive for all $t\ge 0$. 
Define now a function $G(u,v)$ by
\be \label{G3}
G(u,v)=\int_0^1 w_0\left(1-\frac{1-x}{u+a(1-x)}\right) \ dx,
\ee
with $a$ given in terms of $u,v$ by (\ref{F3}). Since the conservation law (\ref{B1}) is equivalent to $e^tG(u(t),v(t))=1$, it follows from (\ref{D3}) that
\be \label{H3}
G(u,v)+ G_u(u,v)\frac{du}{dt}+G_v(u,v)u=0 \ .
\ee
Hence if $[u(t),v(t)]$ is the solution to  the two dimensional dynamical system (\ref{D3}), (\ref{H3}) with initial condition $u(0)=1,v(0)=0$, then $w(x,t)=e^tw_0(F(x,t))$ with $F(x,t)$ given by (\ref{E3}) is the  solution to (\ref{A1}), (\ref{B1}) with initial condition $w_0(\cdot)$.

Observe now that $w_0(z)\sim (1-z)^p$ as $z\ra 1$ where $p={\beta_0/(1-\beta_0)}$, and also from Theorem 1.1 we have $\lim_{t\ra\infty} u(t)=\infty$. Hence from (\ref{F3}), (\ref{G3}) we may conclude that at large time,
\be \label{I3}
G(u,v) \ \sim \  u^{-p} G_0(v/u),
\ee
where the function $G_0(\xi)$ is given  by the formula
\be \label{J3}
G_0(\xi) \ = \  \int_0^1 \left[ \frac{1-x}{ 1+(1-x)\{a_1+a_2\xi\} }\right]^p \ dx  \ ,
\ee
with
\be \label{JJ3}
a_1 \ = \ \psi''(1)/2|\psi'(1)|, \quad a_2 \ = \ |\phi'(1)|[\psi''(1)-2\psi'(1)]/2|\psi'(1)|  \ .
\ee
Note that $a_1$ is non-negative and $a_2$  strictly positive. 
If we replace the function $G(u,v)$ of (\ref{G3}) by the RHS of (\ref{I3}), then we easily see that in the variables $[u,\xi]$ the system (\ref{D3}), (\ref{H3}) reduces to
\begin{eqnarray} \label{K3}
\frac{d\xi(t)}{dt} \ &=& \  \frac{(p-\xi)G_0(\xi)}{pG_0(\xi)+\xi G'_0(\xi)} \ , \\
\frac{d}{dt} \log u(t) \ &=& \  \frac{G_0(\xi)+G_0'(\xi)}{pG_0(\xi)+\xi G_0'(\xi)} \ . \label{L3}
\end{eqnarray}
It is evident from (\ref{K3}) that $\xi=p$ is a globally asymptotically stable critical point for the equation provided we can establish a few properties of the function $G_0(\cdot)$.
\begin{lem} The function $G_0(\xi)$ is a positive monotonic decreasing function of $\xi$ for $\xi>0$, and satisfies  the differential inequality
\be \label{M3}
\xi G_0'(\xi)+(p+1)G_0(\xi) \ \ge \ \left(1+\{\psi''(1)+|\phi'(1)|[\psi''(1)-2\psi'(1)]\xi\}/2|\psi'(1)|\right)^{-p} \ .
\ee
Furthermore the function $G_0(\cdot)$ satisfies the inequality
\be \label{N3}
G_0(\xi) \ < \ \left(1+\{\psi''(1)+|\phi'(1)|[\psi''(1)-2\psi'(1)]\xi\}/2|\psi'(1)|\right)^{-p} \ , \quad \xi \ge 0.
\ee
\end{lem}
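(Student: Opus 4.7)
The plan is to reduce everything to a one-parameter family of integrals. Writing $b=a_1+a_2\xi$, where by (\ref{JJ3}) together with (\ref{D1})--(\ref{E1}) one has $a_1=\psi''(1)/2|\psi'(1)|\ge 0$ (from convexity of $\psi$) and $a_2>0$ (since $|\phi'(1)|>0$ and $\psi''(1)-2\psi'(1)>0$), and substituting $y=1-x$ in (\ref{J3}), gives
\[
G_0(\xi)=\int_0^1\left(\frac{y}{1+by}\right)^p\,dy,\qquad b=a_1+a_2\xi.
\]
Positivity of $G_0$ is clear; strict monotonicity in $\xi$ follows from the fact that $b\mapsto y/(1+by)$ is strictly decreasing for each fixed $y\in(0,1)$ while $\xi\mapsto b$ is strictly increasing; and the pointwise bound (\ref{N3}) follows because $y\mapsto y/(1+by)$ is strictly increasing on $[0,1]$ with maximum $1/(1+b)$ attained at $y=1$, so the integrand is strictly below $(1+b)^{-p}$ throughout $(0,1)$.

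The differential inequality (\ref{M3}) is the substantive part. First I would split $(1+by)^{-p}=(1+by)^{-p-1}+by(1+by)^{-p-1}$ to obtain $G_0(\xi)=I_1+bI_2$, where $I_1=\int_0^1 y^p(1+by)^{-p-1}\,dy$ and $I_2=\int_0^1 y^{p+1}(1+by)^{-p-1}\,dy$. Differentiating $G_0$ with respect to $\xi$ under the integral yields $G_0'(\xi)=-p a_2 I_2$. The second key identity comes from the fundamental theorem of calculus applied to $F(y)=y^{p+1}(1+by)^{-p}$: one computes $F'(y)=(p+1)y^p(1+by)^{-p-1}+by^{p+1}(1+by)^{-p-1}$, so integrating from $0$ to $1$ gives $(1+b)^{-p}=(p+1)I_1+bI_2$. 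Combining these three identities and substituting $b=a_1+a_2\xi$ produces the clean equality
\[
\xi G_0'(\xi)+(p+1)G_0(\xi)-(1+b)^{-p}=p a_1 I_2,
\]
from which (\ref{M3}) follows at once since $p>0$, $a_1\ge 0$ and $I_2>0$.

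I do not expect any serious obstacle. The only non-routine ingredient is spotting the antiderivative $F(y)=y^{p+1}(1+by)^{-p}$ that produces the integration-by-parts identity $(1+b)^{-p}=(p+1)I_1+bI_2$; once this is in hand the algebra cancels exactly, leaving only the single non-negative term $p a_1 I_2$. This also clarifies why the convexity hypothesis $\psi''(1)\ge 0$ in (\ref{E1}) is precisely what is needed for (\ref{M3}), with equality holding iff $\psi''(1)=0$.
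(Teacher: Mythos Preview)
Your proof is correct and, at its core, rests on the same integration-by-parts identity as the paper's argument. The paper works with $g_0(x,\xi)=1/(1+(1-x)b)$ and uses the comparison $0\le -\partial_\xi g_0\le \frac{1-x}{\xi}\partial_x g_0$ (which is exactly the statement $a_2\xi\le b$, i.e.\ $a_1\ge 0$) before integrating by parts in $x$; you instead compute everything explicitly via the antiderivative $F(y)=y^{p+1}(1+by)^{-p}$ and arrive at the exact identity $\xi G_0'(\xi)+(p+1)G_0(\xi)-(1+b)^{-p}=p a_1 I_2$. The two are equivalent, but your version has the pleasant bonus of identifying the defect precisely as $p a_1 I_2$, making transparent that equality in (\ref{M3}) holds iff $\psi''(1)=0$. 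Your treatment of (\ref{N3}) via the monotonicity of $y\mapsto y/(1+by)$ is the same as the paper's observation that $(1-x)g_0(x,\xi)$ is decreasing.
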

\begin{proof} Observe from (\ref{J3}) that
\be \label{O3}
G_0(\xi) \ = \ \int_0^1[(1-x) g_0(x,\xi)]^p \  dx, \quad \xi\ge 0,
\ee
where
\be \label{P3}
0 \  \le \ -\frac{\pa}{\pa\xi} g_0(x,\xi) \ \le \  \frac{(1-x)}{\xi} \ \frac{\pa}{\pa x} g_0(x,\xi) \ .
\ee
The inequality (\ref{M3}) follows from (\ref{P3}) if we integrate by parts in (\ref{O3}). To see that (\ref{N3}) holds we use the fact that $(1-x) \pa g_0(x,\xi)/\pa x \  \le \ g_0(x,\xi)$, whence $(1-x)g_0(x,\xi)$ is a decreasing function.  Hence $G_0(\xi)\le g_0(0,\xi)^p$, which is (\ref{N3}).
\end{proof}
\begin{proposition} Assume that the functions $\phi(\cdot), \ \psi(\cdot)$ are quadratic and satisfy (\ref{D1}), (\ref{E1}). Assume further that the beta function $\beta(x,0), \ 0\le x\le 1$, for the initial data is H\"{o}lder continuous at  $x=1$ and $\beta(1,0)=\beta_0$ with $0<\beta_0<1$.  Then setting $\kappa=[1/\beta_0-\phi'(1)-1]/|\psi'(1)|$, there are positive constants $C,\gamma$ such that for $t\ge 0$
\be \label{Q3}
|\kappa(t ) -   \kappa|\le Ce^{-\gamma t}, \quad  \|\beta(\cdot,t)-\beta_\kappa(\cdot)\|_\infty\le Ce^{-\gamma t}, 
\ee
where $\beta_\kappa(\cdot)$ is the beta function of the time independent solution $w_\kappa(\cdot)$ of (\ref{G1}).
\end{proposition}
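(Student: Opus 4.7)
The plan is to leverage the two-dimensional reduction established in this section, analyze the autonomous ODE for $\xi = v/u$, and then unwind the identifications to recover exponential convergence of $\kappa(t)$ and $\beta(\cdot,t)$.

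First, I would convert the Hölder hypothesis on $\beta(\cdot,0)$ at $x=1$ into a quantitative expansion of the initial data. Setting $p=\beta_0/(1-\beta_0)$, the assumption $|\beta(x,0)-\beta_0|\le C(1-x)^\alpha$ for some $\alpha>0$ (combined with arguments analogous to Lemma~1 of \cite{c}) yields $w_0(z)=c_0(1-z)^p[1+O((1-z)^\alpha)]$ together with the corresponding derivative estimate. Substituting into the definition (\ref{G3}) of $G$ and using that $F(x,t)\to 1$ uniformly in $x$ (by Lemma~2.1), this upgrades the leading-order relation (\ref{I3}) to a quantitative expansion
\[
G(u,v)=u^{-p}G_0(v/u)+u^{-p-\alpha}R(u,v),
\]
with $R$ bounded, valid for $u\ge u_0$. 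Hence the system (\ref{D3})--(\ref{H3}) takes the form (\ref{K3})--(\ref{L3}) modulo a perturbation of size $u^{-\alpha}$.

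Next I would analyze the unperturbed $\xi$-equation. By Lemma~3.1 the denominator $pG_0(\xi)+\xi G_0'(\xi)$ is strictly positive (combine (\ref{M3}) and (\ref{N3})), while the numerator $(p-\xi)G_0(\xi)$ vanishes only at $\xi=p$; thus the equation is a scalar autonomous ODE with $\xi=p$ as its unique, globally attracting critical point. Linearizing at $\xi=p$ yields decay rate $G_0(p)/[p(G_0(p)+G_0'(p))]$, and since the denominator is positive, the rate is strictly positive. Together with monotone convergence away from $\xi=p$, this produces the bound $|\xi(t)-p|\le C e^{-\gamma t}$ for the unperturbed flow. To incorporate the $u^{-\alpha}$ perturbation I would bootstrap: Theorem~1.1 (already proved in~\S2 for subcritical data) guarantees $\kappa(t)\ge c>0$, which via (\ref{C3}) forces $u(t)$ to grow at some fixed exponential rate, so the perturbation $u^{-\alpha}$ is itself exponentially small and the exponential-decay conclusion for $\xi$ persists.

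From $\xi(t)\to p$ exponentially, equation (\ref{L3}) gives $(\log u)'(t)=[G_0(\xi)+G_0'(\xi)]/[pG_0(\xi)+\xi G_0'(\xi)]+O(e^{-\gamma t})=1/p+O(e^{-\gamma t})$, using that at $\xi=p$ the right-hand side simplifies to $1/p$ (denominator positivity also ensures $G_0(p)+G_0'(p)>0$). But $(\log u)'(t)=\phi'(1)-\psi'(1)\kappa(t)$ directly from (\ref{C3}), so $\kappa(t)=[\phi'(1)-(\log u)'(t)]/\psi'(1)=\kappa+O(e^{-\gamma t})$, which is the first estimate in (\ref{Q3}). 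It also gives $u(t)=A_\ast e^{t/p}(1+O(e^{-\gamma t}))$ and consequently $a(t)/u(t)\to a_1+a_2 p=\alpha$ exponentially.

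For the uniform $\beta$-estimate I would use the explicit representation $w(x,t)=e^t w_0(F(x,t))$ with $F$ given by (\ref{E3}). Writing $\sigma=1-F(x,t)=(1-x)/[u+a(1-x)]$ and changing variables $z=F(y,t)$ in $h(x,t)=\int_x^1 w(y,t)\,dy$ expresses $\beta(x,t)$ as a ratio whose pieces depend on $\sigma$, on $a/u$, and on the rescaled initial profile $w_0(1-\sigma\tau)/(c_0\sigma^p\tau^p)=1+O(\sigma^\alpha)$. Since $a/u\to\alpha$ exponentially, $\sigma\to 0$ uniformly on $[0,1]$, and the computation (as carried out above for the limit) reproduces exactly the explicit integral formula for $\beta_\kappa(x)$, the difference $\beta(x,t)-\beta_\kappa(x)$ is $O(e^{-\gamma' t})$ uniformly in $x$. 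The main obstacle is making the uniformity hold up to $x=1$: near $x=1$ both $\beta(x,t)$ and $\beta_\kappa(x)$ tend to $\beta_0$ and the factor $w_0(F(x,t))$ degenerates, so one must use the Hölder remainder $O((1-F)^\alpha)$ from step one to show that the singular contributions cancel cleanly in the ratio, leaving a genuinely uniform exponential bound.
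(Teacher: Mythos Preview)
Your approach is essentially the same as the paper's: reduce to the two-dimensional $(\xi,\eta)$ system (with $\eta=1/u$), show the $\eta$-dependent terms are exponentially small, and deduce exponential convergence of $\xi$ to $p$, then of $\kappa$, then of $\beta$. The paper parametrizes the perturbation via $G_0(\xi,\eta)$ in (\ref{R3})--(\ref{T3}) and expresses the error terms directly through $\beta(\cdot,0)$ (see (\ref{X3}), (\ref{Z3})), which is slightly cleaner than your route through the expansion $w_0(z)=c_0(1-z)^p[1+O((1-z)^\alpha)]$ since the H\"older hypothesis is on $\beta$ rather than on $w_0$; but the content is the same.

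There is one genuine slip. You write that ``$\kappa(t)\ge c>0$, which via (\ref{C3}) forces $u(t)$ to grow at some fixed exponential rate.'' This inference is false as stated: from (\ref{C3}) one has $(\log u)'(t)=\phi'(1)+|\psi'(1)|\kappa(t)$, and since $\phi'(1)<0$ the lower bound $\kappa(t)\ge c$ only yields exponential growth of $u$ if $c>\kappa_0=|\phi'(1)|/|\psi'(1)|$, which Lemma~2.2 does not guarantee. The exponential growth of $u$ is nevertheless true, but it comes from a different part of Theorem~1.1, namely (\ref{J1}) (or directly from the argument around (\ref{AF2})--(\ref{AH2}) in Lemma~2.5), which gives $\frac{1}{t}\log u(t)\to 1/\beta_0-1>0$. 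This is exactly how the paper proceeds (see (\ref{U3})). Once you replace your justification by this one, the bootstrap goes through and the rest of your outline matches the paper's proof.
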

\begin{proof}   We write the function $G(u,v)$ of (\ref{G3})  as $G(u,v)=u^{-p}G_0(\xi,\eta)$, where $p=\beta_0/(1-\beta_0), \ \xi=v/u, \ \eta=1/u$. Thus $G_0(\xi,\eta)$ is given by the formula
\be \label{R3}
G_0(\xi,\eta) \ = \  \int_0^1 \eta^{-p}w_0\left(1- \frac{\eta z}{ 1+z\{a_1(1-\eta)+a_2\xi\} }\right) \ dz .
\ee
 With the extra dependence of $G_0(\cdot,\cdot)$, the system (\ref{K3}), (\ref{L3}) needs  to be modified to 
\begin{eqnarray} \label{S3}
\frac{d\xi(t)}{dt} \ &=& \  \frac{(p-\xi)G_0(\xi,\eta)+\eta \pa G_0(\xi,\eta)/\pa \eta}{pG_0(\xi,\eta)+\xi \pa G_0(\xi,\eta)/\pa \xi+\eta \pa G_0(\xi,\eta)/\pa \eta} \ , \\
\frac{d}{dt} \log u(t) \ &=& \  \frac{G_0(\xi,\eta)+ \pa G_0(\xi,\eta)/\pa \xi}{pG_0(\xi,\eta)+\xi \pa G_0(\xi,\eta)/\pa \xi+\eta \pa G_0(\xi,\eta)/\pa \eta} \ . \label{T3}
\end{eqnarray} 
Observe that the denominator on the RHS of (\ref{S3}), (\ref{T3}) is the same as $-u^{p+1}G_u(u,v)$ and hence by (\ref{G3}) is strictly positive.
From the proof of Lemma 2.5 it follows that for any $\delta>0$ there exists $T_\del>0$ such that
\be \label{U3}
u(t)\ge C_\del\exp[(1/\beta_0-1-\del)t], \quad t\ge T_\del \ ,
\ee
for a constant $C_\del$ depending on $\del$ and the initial data.  Choosing $\del<1/\beta_0-1$ in (\ref{U3}) we see that
the system (\ref{S3}), (\ref{T3}) converges  to the simpler system (\ref{K3}), (\ref{L3}) as $t\ra\infty$.

We first show that $\sup_{t\ge 0}\xi(t)<\infty$.  In the case $\beta(\cdot,0)\equiv\beta_0$ this follows from the inequality  $ |\pa G_0(\xi,\eta)/\pa \eta| \le pa_1G_0(\xi,\eta), \ \xi\ge 0, 0\le\eta\le 1$ and (\ref{U3}). More generally let $g_1(z), \  g_2(z,\xi,\eta)$ be defined by
\begin{eqnarray} \label{V3}
g_1(z) \ &=& \ \int_0^z [1-\beta(1-z',0)] \ dz',  \quad 0\le z <1,\\
g_2(z,\xi,\eta) \  &=& \  \frac{ z}{ 1+z\{a_1(1-\eta)+a_2\xi\} }, \quad 0\le z<1. \nonumber \ 
\end{eqnarray}
Then  $ \pa G_0(\xi,\eta)/\pa \eta$ is given by the formula
\be \label{W3}
\frac{\pa }{\pa \eta} G_0(\xi,\eta) \ = \  \int_0^1  g_3(z,\xi,\eta) \ \eta^{-p}w_0\big(1- \eta g_2(z,\xi,\eta)\big) \ dz ,
\ee
where 
\begin{multline} \label{X3}
g_3(z,\xi,\eta) \ = \ \left[   \frac{a_1\beta\big(1- \eta g_2(z,\xi,\eta), 0\big)\eta g_2(z,\xi,\eta)^2}{g_1\big(\eta g_2(z,\xi,\eta)\big)}  \right] + \\
\left[   \frac{\beta\big(1- \eta g_2(z,\xi,\eta), 0\big) g_2(z,\xi,\eta)}{g_1\big(\eta g_2(z,\xi,\eta)\big)}  -\frac{p}{\eta}\right]  \ .
\end{multline}
Observe that there exists $\eta_0>0$ such that the first term on the RHS of (\ref{X3}) and $\eta$ times the second term are bounded by a constant  for all $(z,\xi,\eta)$ with $0\le z\le1,\xi\ge 0, 0\le\eta\le \eta_0$. 
We conclude that $ |\eta \pa G_0(\xi,\eta)/\pa \eta| \le CG_0(\xi,\eta), \ \xi\ge 0, 0\le\eta\le \eta_0$,  for some constant $C$. Hence (\ref{S3}) implies that $\sup_{t\ge 0}\xi(t)<\infty$. 

Next we obtain bounds on the denominator of the RHS of (\ref{S3}), (\ref{T3}). The denominator is  $-u^{p+1}G_u(u,v)$, which is given in terms of the $(\xi,\eta)$ variables by
\be \label{Y3}
-u^{p+1}G_u(u,v) \ = \ \int_0^1  g_4(z,\xi,\eta) \ \eta^{-p}w_0\big(1- \eta g_2(z,\xi,\eta)\big) \ dz ,
\ee
where
\be \label{Z3}
 g_4(z,\xi,\eta)  \ = \   \frac{(1+a_1 z)\beta\big(1- \eta g_2(z,\xi,\eta), 0\big)\eta z^{-1}g_2(z,\xi,\eta)^2}{g_1\big(\eta g_2(z,\xi,\eta)\big)}   \ .
\ee
It is evident from (\ref{Z3}) that there exists $\eta_0>0$ such that for any $\xi_0\ge 0$, there are positive constants $C_1,C_2$ with the property
\be \label{AA3}
C_1 \ \le \ g_4(z,\xi,\eta)  \  \le C_2, \quad 0\le \eta\le \eta_0, \ 0\le \xi\le \xi_0, \ 0\le z\le 1.
\ee
It follows from (\ref{AA3}) that
\be \label{AB3}
C_1G_0(\xi,\eta) \ \le \  -u^{p+1}G_u(u,v)  \  \le C_2G_0(\xi,\eta), \quad 0\le \eta\le \eta_0, \ 0\le \xi\le \xi_0.
\ee

To see that $[\xi(t),\kappa(t)]$ converges exponentially fast to $[p,\kappa]$, we need to use the H\"{o}lder continuity of $\beta(x,0)$ at  $x=1$.  Observe that the H\"{o}lder continuity  implies that $\eta$ times the second term of (\ref{X3}) is bounded by $\eta^{\alpha}$ for some $\alpha>0$ when $\eta<<1$. The exponential convergence of $\xi(t)$ to $p$ as $t\ra\infty$ follows now from (\ref{S3}) and (\ref{AB3}). To see exponential convergence of $\kappa(t)$ we use the fact that  $ |\pa G_0(\xi,\eta)/\pa \xi| \le CG_0(\xi,\eta), \ \xi\ge 0, 0\le\eta\le 1$, for some constant $C$. The convergence follows then from the fact that 
$\lim_{\eta\ra 0} G_0(\xi,\eta)=G_0(\xi), \ \xi\ge 0$, Lemma 3.1,  the exponential convergence of $\xi(t)$ and (\ref{T3}).

To see that $\beta(\cdot,t)$  converges as $t\ra\infty$ first note that the invariant solution $w_\kappa(\cdot)$ of (\ref{G1}) with $\kappa>\phi'(1)/\psi'(1)$ is given by the formula
\be \label{WW3}
w_\kappa(x) \ = \  C\left[\frac{1-x}{1+(1-x)\{a_1+pa_2\}}\right]^p \ ,
\ee
for some positive constant $C$. It follows that $w(x,t)=w_\kappa(x)g(x,t)$ where  $g(x,t)$ is a positive function defined by
\be \label{AC3}
\frac{\pa}{\pa x} \log g(x,t) \ = \  \frac{p}{[1+z\{a_1+pa_2\}]z}- \frac{g_4(z,\xi(t),\eta(t))}{(1+a_1z)z}  \ , \quad z=1-x.
\ee
The H\"{o}lder continuity of $\beta(x,0)$ at  $x=1$ and (\ref{AC3}) implies that
\be \label{AD3}
|(1-x)\frac{\pa}{\pa x} \log g(x,t)| \ \le \ Ce^{-\gamma t}, \quad 0\le x\le 1, \ t\ge 0,
\ee
for some positive constants $C,\gamma$.  The exponential convergence of $\beta(\cdot, t)$ follows from (\ref{AD3}). To see this we note that
\begin{multline} \label{AE3}
|h(x,t)-h_\kappa(x) g(x,t)| \  \le \ \int_x^1 h_\kappa(x')|\pa g(x',t)/\pa x'|  \ dx' \\
\le \  Ce^{-\gamma t}\int_x^1 w_\kappa(x')g(x',t) \ dx' \ = \  Ce^{-\gamma t} h(x,t),
\end{multline}
where $h_\kappa(\cdot)$ is the $h$ function associated with $w_\kappa(\cdot)$. Similarly we have that
\be \label{AF3}
|c(x,t)-c_\kappa(x) g(x,t)| \  \le \  p^{-1}Ce^{-\gamma t} c_\kappa(x) g(x,t) \ ,
\ee
where $c_\kappa(\cdot)$ is the $c$ function associated with $w_\kappa(\cdot)$. 
\end{proof}
\begin{proof}[Proof of Theorem 1.2-subcritical case] The fact that $\lim_{t\ra\infty}\xi(t)=p$ follows from the argument of Proposition 3.1 on observing that continuity of $\beta(x,0)$ at $x=1$ implies \\
 $\lim_{\eta\ra 0} \eta \sup_{0\le \xi\le \xi_0}[|\pa G_0(\xi,\eta)/\pa \eta|/G_0(\xi,\eta)]=0$ for any $\xi_0\ge 0$. Now $\lim_{t\ra\infty}\kappa(t)=\kappa$ follows from  $\lim_{t\ra\infty}\xi(t)=p,\ \lim_{t\ra\infty}\eta(t)=0$ and (\ref{T3}). The convergence of $\beta(\cdot,t)$ to $\beta_\kappa(\cdot)$ in the $L^\infty$ norm as $t\ra \infty$ follows just as in Proposition 3.1 by noting that continuity of $\beta(x,0)$ at $x=1$ implies the inequality (\ref{AD3}) holds with a constant $C(t)$ on the RHS which has the property $\lim_{t\ra\infty}C(t)=0$.
\end{proof}

\section{The Critical Case}
Here we begin the proof of  Theorem 1.3 using only the beta function methodology.  First we consider a necessary condition obtained by Niethammer and Velasquez \cite{nv2} on the initial data $w(x,0), \ 0\le x<1$, of (\ref{A1}), (\ref{B1}) for convergence in the critical case to the self-similar solution  at large time.  We show that this condition, which was proven in Theorem 3.1 of \cite{nv2}, is implied by the condition $\lim_{x\ra 1}\beta(x,0)=1$.  The condition for convergence of \cite{nv2} is given in terms of a new variable $y$ determined by the requirement that $w_{\kappa_0}(x)/w_{\kappa_0}(0)=e^{-y}, \ 0\le x<1$. Writing  $w(x,0)=\tilde{w}_0(y), \ 0\le x<1$,  the necessary condition for convergence is that
\be \label{A4}
\lim_{y\ra\infty} \frac{\tilde{w}_0(y+\la(y)z)}{\tilde{w}_0(y)}          \ = \ e^{-z}
\ee
locally uniformly in $z\ge 0$ for some positive function $\la(y), \ y\ge 0$.
\begin{proposition} Suppose the initial data $w(x,0), \ 0\le x<1$, of (\ref{A1}), (\ref{B1}) satisfies  $\lim_{x\ra 1}\beta(x,0)=1$. Then (\ref{A4}) holds for the function $\la(y) \ = 2g(x)/[\kappa_0\psi''(1)-\phi''(1)](1-x)^2$,  where
\be \label{B4}
g(x) \ = \ \int_x^1[1-\beta(x',0)] \  dx' \ , \quad 0\le x<1.
\ee
\end{proposition}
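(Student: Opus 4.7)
The plan is to compute $\tfrac{d}{dy}\log\tilde{w}_0(y)$ in closed form from the initial data, identify its leading asymptotic as $y\ra\infty$, and integrate. First, from $\beta = ch/w^2$ with $c=-\pa w/\pa x$, a direct calculation gives $\tfrac{d}{dx}(h(x,0)/w(x,0)) = (ch-w^2)/w^2 = -(1-\beta(x,0))$. Since $w(\cdot,0)$ is decreasing, the crude bound $h(x,0)\le (1-x)w(x,0)$ yields $h(x,0)/w(x,0)\ra 0$ as $x\ra 1$. Integrating from $x$ to $1$ then gives the clean identity $g(x) = h(x,0)/w(x,0)$, and consequently $c(x,0)/w(x,0) = \beta(x,0)/g(x)$.

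Next, since $w_{\kappa_0}$ solves the stationary form $(\phi-\kappa_0\psi)w_{\kappa_0}' = w_{\kappa_0}$ of (\ref{A1}), we have $c_{\kappa_0}/w_{\kappa_0} = 1/[\kappa_0\psi(x)-\phi(x)]$, so $dy/dx = 1/[\kappa_0\psi(x)-\phi(x)]$. The chain rule applied to $w_0(x)=\tilde{w}_0(y(x))$ yields
$$\frac{d}{dy}\log\tilde{w}_0(y) = -\frac{c(x,0)/w(x,0)}{c_{\kappa_0}(x)/w_{\kappa_0}(x)} = -\frac{\beta(x,0)\,[\kappa_0\psi(x)-\phi(x)]}{g(x)}.$$
Because $\kappa_0\psi'(1)=\phi'(1)$, a Taylor expansion at $x=1$ gives $\kappa_0\psi(x)-\phi(x) = \tfrac{\gamma}{2}(1-x)^2 + O((1-x)^3)$ with $\gamma = \kappa_0\psi''(1)-\phi''(1)$. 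Substituting, one obtains
$$\lambda(y)\frac{d}{dy}\log\tilde{w}_0(y) = -\beta(x,0)\cdot\frac{2[\kappa_0\psi(x)-\phi(x)]}{\gamma(1-x)^2} \longrightarrow -1$$
as $y\ra\infty$, since $\beta(x,0)\ra 1$.

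Finally, I would integrate this relation. After the change of variable $y' = y+\lambda(y)s$,
$$\log\frac{\tilde{w}_0(y+\lambda(y)z)}{\tilde{w}_0(y)} = \int_0^z \frac{\lambda(y)}{\lambda(y+\lambda(y)s)}\,[-1+o(1)]\,ds,$$
so (\ref{A4}) reduces to verifying that $\lambda$ is slowly varying in the sense that $\lambda(y+\lambda(y)s)/\lambda(y)\ra 1$ uniformly for $s$ in a compact set. This is the principal technical step. The $x$-interval corresponding to $[y, y+\lambda(y)z]$ has length $x'-x \approx \lambda(y)z\cdot\gamma(1-x)^2/2 = g(x)z$. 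Since $g'(x) = -(1-\beta(x,0))$ and $\sup_{x''\ge x}(1-\beta(x'',0))\ra 0$, we get $|g(x'')-g(x)|/g(x) \le z\cdot o(1)$ on this interval, while $g(x)=o(1-x)$ forces $(1-x'')/(1-x)\ra 1$. Together these give $\lambda(y+\lambda(y)s)/\lambda(y)\ra 1$ uniformly, completing the argument.
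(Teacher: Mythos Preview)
Your proof is correct and follows essentially the same approach as the paper's: the key identity $\frac{d}{dx}\log w_0 = -\beta/g$ (the paper's (\ref{D4})), the bound $|g(x')-g(x)|\le zg(x)\sup_{x''\ge x}|1-\beta(x'',0)|$ (the paper's (\ref{E4})), and the Taylor expansion of $\kappa_0\psi-\phi$ at $x=1$ (the paper's (\ref{F4})) are identical. The only difference is organizational---the paper first proves the $x$-variable analogue $w_0(x+zg(x))/w_0(x)\to e^{-z}$ and then translates to $y$, whereas you work directly in the $y$-variable and phrase the core estimate as slow variation of $\lambda$; one minor point is that under (\ref{D1}), (\ref{E1}) alone the Taylor remainder is only $o((1-x)^2)$ rather than $O((1-x)^3)$, but this does not affect your argument.
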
 
\begin{proof} We first observe that
\be \label{C4}
\lim_{x\ra 1}\frac{w(x+zg(x),0)}{w(x,0)} \ = \ e^{-z}
\ee
locally uniformly in $z\ge 0$. To see this note that the logarithm of the fraction on the LHS of (\ref{C4}) is given by $zg(x)$ times
\be \label{D4}
\frac{d}{dx'} \log w(x',0) \ = \ -\frac{\beta(x',0)}{g(x')} \ ,
\ee
for some $x'$ satisfying $x<x'<x+zg(x)$, and that
\be \label{E4}
|g(x)-g(x')| \  \le \ z g(x)\sup_{x\le x''<1}|1-\beta(x'',0)| \ .
\ee

Next we show that (\ref{C4}) implies (\ref{A4}). To do this we note that the transformation $x\ra y $ is explicitly given by
\be \label{F4}
y \ = \ \int_0^x\frac{dx'}{\left[\kappa_0\psi(x')-\phi(x')\right]} \ = \ \frac{2[1+o(1-x)]}{[\kappa_0\psi''(1)-\phi''(1)](1-x)} \  , 
\ee
assuming the continuity of $\phi''(x),\psi''(x)$ at $x=1$.  Suppose now that $x_z\ra y+\la(y)z$. Since the function $\kappa_0\psi(\cdot)-\phi(\cdot)$ is positive decreasing we conclude from (\ref{F4}) that
\be \label{G4}
\la(y)z \  \ge \  \frac{2[1+o(1-x)](x_z-x)}{[\kappa_0\psi''(1)-\phi''(1)](1-x)^2} \ .
\ee
Now (\ref{E4}) and the fact that $\lim_{x\ra 1}\beta(x,0)=1$ implies that $x_z-x\le (1-x)o(1-x)$, whence we obtain from (\ref{F4}) the upper bound
\be \label{H4}
\la(y)z \  \le \  \frac{2[1+o(1-x)](x_z-x)}{[\kappa_0\psi''(1)-\phi''(1)](1-x)^2} \ .
\ee
The result follows from (\ref{C4}), (\ref{G4}), (\ref{H4}).
\end{proof}
Next we wish to obtain a uniform upper  bound on $\kappa(t), \ t\ge 0$, in the critical case.  In view of (\ref{F2}) and Lemma 2.3, this is a consequence of the following: 
\begin{lem} Let $w(x,t), \ x,t\ge 0$, be the solution to (\ref{A1}), (\ref{B1}) with coefficients satisfying (\ref{D1}), (\ref{E1}), (\ref{O1}). Assume the initial data $w(\cdot,0)$ has beta function $\beta(\cdot,0)$ satisfying 
(\ref{U1}) with $0<\beta_0\le 1$.  Then  there are constants $\beta_\infty>0$ and $T_0\ge 0$  depending only on  the initial data, such that $\inf\beta(\cdot,t)\ge \beta_\infty$ for all $t\ge T_0$.
\end{lem}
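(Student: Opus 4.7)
\emph{Proof plan.} The plan is to bound $\beta(x,t)$ from below along backward characteristics, adapting the approach of Lemmas 2.2 and 2.4 to the regime $\beta_0=1$. The starting point is the sub-solution estimate $\beta(x,t)\le\beta(F(x,t),0)$ from \cite{c}, combined with Lemma 2.1: for every $\eta>0$ there is a time $T_\eta$ depending only on the initial data such that $\sup_{x\in[0,1]}\beta(x,t)\le\beta_0+\eta$ and $\beta(F(x,t),0)\ge\beta_0-\eta$ uniformly in $x\in[0,1]$ for all $t\ge T_\eta$. Fix $\eta$ small and set $T_0:=T_\eta$. The characteristic formula (\ref{U2}) applied to (\ref{J2}) then reduces the lemma to establishing the uniform bound
\[
\int_0^t g(x(s),s)\,ds\le C,\qquad x\in[0,1],\ t\ge T_0,
\]
for a constant $C$ depending only on the initial data, where $x(s)$ solves (\ref{A2}) with $x(t)=x$.

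For this estimate I would combine the pointwise bound (\ref{Z2}), namely $g(z,s)\le[\kappa(s)\psi''(x_1)-\phi''(x_1)](1-z)$ for $z\ge x_1$, with a careful analysis of the backward trajectory. Using the near-boundary linearization $\phi(x)-\kappa\psi(x)\sim(x-1)[\phi'(1)-\kappa\psi'(1)]$, in the subcritical regime $\kappa(s)\ge\kappa_0+\delta$ the quantity $1-x(s)$ decays exponentially backward in time as in (\ref{W2}), giving a uniform bound on $\int g\,ds$ exactly as in the proof of Lemma 2.4. In the critical regime where $\kappa(s)$ is close to $\kappa_0$, the decay is only polynomial, governed by the quadratic vanishing $\phi-\kappa_0\psi\sim -\gamma(1-x)^2/2$ with $\gamma=\kappa_0\psi''(1)-\phi''(1)$. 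Exploiting the log-concavity of $h(\cdot,s)$, which follows from $\sup\beta(\cdot,s)\le 1+\eta$, in the integration-by-parts representation
\[
g(x,s)=-\,\frac{1}{h(x,s)}\int_x^1[\phi''(z)-\kappa(s)\psi''(z)]\,h(z,s)\,dz,
\]
one upgrades (\ref{Z2}) to a quadratic bound $g(z,s)\le C(1-z)^2$ near $z=1$ in this regime. Since $\int_0^t(1-x(s))^2\,ds$ is uniformly bounded even under the slow decay $1-x(s)\sim 2/(\gamma(t-s))$, the integral $\int g\,ds$ stays finite, and this yields $\beta(x,t)\ge(\beta_0-\eta)e^{-C}=:\beta_\infty>0$.

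The main obstacle is that at this stage we do not have an a priori upper bound on $\kappa(s)$; the upper bound on $\kappa$ will only follow afterward from this very lemma via Lemma 2.3. Consequently the estimate on $\int g$ has to absorb $\kappa(s)$ through the coefficient $\kappa(s)\psi''(x_1)-\phi''(x_1)$ in (\ref{Z2}), and the quantitative cancellation against the decay of $1-x(s)$ along the backward trajectory---exponential in the subcritical regime, only polynomial in the critical one---is the heart of the proof. The monotonicity hypotheses (\ref{O1}) are used crucially to sharpen (\ref{Z2}) to the quadratic estimate near $x=1$ under the log-concavity of $h$, which is what closes the argument in the critical regime.
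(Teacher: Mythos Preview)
Your overall strategy---reduce to a uniform bound on $\int_{T_0}^t g(x(s),s)\,ds$ along backward characteristics---is natural, but the key step where you ``upgrade (\ref{Z2}) to a quadratic bound $g(z,s)\le C(1-z)^2$'' from log-concavity of $h(\cdot,s)$ is not justified, and in fact false. What log-concavity actually gives, via your integration-by-parts representation, is
\[
g(x,s)\ \le\ C\,\frac{1}{h(x,s)}\int_x^1 h(z,s)\,dz\ \le\ C\,\frac{h(x,s)}{w(x,s)},
\]
and the ratio $h/w$ can behave like $C(1-z)$ rather than $(1-z)^2$: for instance if $h(\cdot,s)$ is a power of $(1-x)$, which is perfectly log-concave, one has $h/w=(1-x)/(p+1)$. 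Nothing in the hypotheses forces $h(\cdot,s)$ to decay fast enough near $x=1$ to make $h/w\sim(1-z)^2$; that would essentially amount to assuming the critical profile you are trying to establish. Since in the regime $\kappa(s)\approx\kappa_0$ the backward characteristic only satisfies $1-x(s)\sim 2/(\gamma(t-s))$, a merely linear bound $g\lesssim(1-z)$ yields $\int g(x(s),s)\,ds\sim\int ds/(t-s)$, which diverges logarithmically, so the argument does not close.

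The paper's proof takes a genuinely different route. It introduces a dyadic decomposition $x_N(0)$ with $w_0(x_N(0))=w_0(0)/2^N$, tracks the intervals $I_N(t)$ along characteristics, and defines auxiliary quantities $\beta_N(t)$ by (\ref{O4}). The monotonicity hypotheses (\ref{O1}) then produce the structural inequalities $|I_N(t)|/|I_{N+1}(t)|\ge C/\beta_N(t)^\alpha$ and $\beta_N(t)\le\beta_{N+1}(t)^{1/2}$, from which one deduces $\beta(x,t)\ge C\beta(0,t)^\alpha$ for all $x$ (inequality (\ref{W4})). The lower bound on $\beta(0,t)$ is then obtained not by integrating $g$ over the whole history, but by a feedback argument through the evolution of $\langle X_t\rangle$ via (\ref{Z4}): if $\beta(0,t)$ drops below a small threshold $\beta_1$, one partitions the subsequent time interval according to whether $\langle X_t\rangle$ is near $1$, intermediate, or small, and in each regime shows $\beta(0,t)$ cannot collapse. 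The point is that the paper never attempts a uniform bound on $\int g$; it instead exploits the self-referential link between dyadic interval ratios and the $\beta_N$, together with the dynamics of $\langle X_t\rangle$, to trap $\beta(0,\cdot)$ from below.
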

\begin{proof}   We follow the argument of Proposition 10 of \cite{c}. Thus for $N=0,1,2,...,$ define points $x_N(0)$ by 
\be \label{L4}
x_0(0)=0, \quad w(x_N(0),0)=w(x_{N-1}(0),0)/2 \ {\rm for} \ N\ge 1.
\ee
Let $x_N(s), \ s\ge 0,$ be the solution of the differential equation (\ref{A2}) with initial condition $x_N(0)$.  Then there is an increasing function $\mathcal{N}:(0,\infty)\ra\Z^{+}$ such that $x_N(t)\ge 0$ for $N\ge\mathcal{N}(t),$ and $x_N(s)=0$ for some $s<t,$ if $N<\mathcal{N}(t)$. From Lemma 2.1 we see that $\lim_{t\ra\infty}\mathcal{N}(t)=\infty$.   For $t>0, \ N\ge\mathcal{N}(t),$ let  $I_N(t)$ be the interval $I_N(t)=\{x \  : \ x_N(t)\le x\le x_{N+1}(t)\}$ with length $|I_N(t)|$.  It follows from (\ref{A2}) that
\begin{multline} \label{M4}
|I_N(t)|\big/|I_N(0)| \ = \\
 \exp\left[ \int_0^t ds\int_0^1d\la \   \ \left\{ \phi'(\la x_N(s)+(1-\la)x_{N+1}(s))-\kappa(s) \psi'(\la x_N(s)+(1-\la)x_{N+1}(s))\right\}\right] \ .
\end{multline}
Hence from (\ref{D1}), (\ref{E1})  and (\ref{M4}) we conclude  that the ratio $|I_N(t)|/|I_{N+1}(t)|$ is an increasing function of $t$, and from \cite{c} that
\be \label{N4}
\lim_{N\ra\infty} |I_N(0)|\big/ |I_{N+1}(0)| \ = \ 2^{1/\beta_0-1} \ \ge \ 1 \ .
\ee

We define a function $\beta_N(t)$ for $t>0, \ N\ge\mathcal{N}(t)$ by
\be \label{O4}
\beta_N(t) \ = \ \exp\left[ \int_0^t ds \   \ |I_N(s)|\left\{ \phi''(x_{N+1}(s))-\kappa(s) \psi''(x_{N+1}(s))\right\}\right] \ ,
\ee
whence $\beta_N(t)$ is a positive decreasing function  of $t$ provided $\mathcal{N}(t)\le N$. From 
(\ref{D1}), (\ref{E1}), (\ref{O1}) it follows that there exists constants $C,\al$ satisfying $0<C,\al<1$ such that
\be \label{P4}
|I_N(t)| \big/|I_{N+1}(t)| \ \ge \ C\big/ \beta_N(t)^\al \quad {\rm for \ } t\ge 0. 
\ee
In view of (\ref{N4}) there exists $N_0\ge 0$ such that for $ N\ge \max\{N_0,\mathcal{N}(t)\},$
\be \label{Q4}
\beta_N(t) \ \le \ \exp\left[ \frac{1}{2}\int_0^t ds \   \ |I_{N+1}(s)|\left\{ \phi''(x_{N+2}(s))-\kappa(s) \psi''(x_{N+2}(s))\right\}\right] \ = \ \beta_{N+1}(t)^{1/2} \ .
\ee
We conclude from (\ref{P4}), (\ref{Q4}) that there exists $T_0\ge 0$ and a function $\mathcal{N}_1:[T_0,\infty)\ra\Z^+\cup\{\infty\}$ with the property   that  $\mathcal{N}_1(t)\ge\mathcal{N}(t)$ and $\beta_N(t)$ satisfies
\begin{multline} \label{R4}
\beta_N(t) \ \ge \ (C/2)^{3/\al} \ {\rm if \ } N\ge \mathcal{N}_1(t), \\
\beta_N(t) \quad {\rm is \ an \ increasing \  function \ of \ }N \quad  {\rm if \ } \mathcal{N}(t)\le N<\mathcal{N}_1(t) \ .
\end{multline}

As in \cite{c} we can compare the function $\beta(\cdot,t)$ to the functions $\beta_N(t), \ N\ge \mathcal{N}(t)$. For $0\le x<1$ let $I_x(t)=\{x': w(x,t)/2\le w(x',t)\le w(x,t)\}$, so that the left endpoint of the interval $I_x(t)$ is $x$ and $I_N(t)=I_{x_N(t)}(t)$. In view of (\ref{D1}), (\ref{E1}), (\ref{O1}) and the fact that $\beta_0\le 1$,  it follows that there are positive constants $\ga_1,\ga_2$ such that the function $g(\cdot,\cdot)$ defined by (\ref{K2}) satisfies the inequalities
\be \label{S4}
\ga_1|I_x(t)|[\kappa(t)\psi''(x+|I_x(t)|/2)-\phi''(x+|I_x(t)|/2)] \ \le \ g(x,t) \ \le \ 
\ga_2 |I_x(t)|[\kappa(t)\psi''(x)-\phi''(x)] \ .
\ee
It follows from (\ref{N4}), (\ref{O4}), (\ref{S4}) that there exists $\al, C>0$ and $T_0\ge 0,$ such that
\be \label{T4}
\beta(x,t) \ \ge \ C\beta_N(t)^\al \quad {\rm for \ }  x\in I_{N+1}(t), \ N\ge \mathcal{N}(t), \ t\ge T_0 \ .
\ee
We also conclude from (\ref{K2}), (\ref{S4}) that there exist positive constants $C,T_0,\al$ and 
\be \label{V4}
\beta(x',t) \ \ge \ C\beta(x,t)^{\al} \quad {\rm for \ } x'\in I_x(t), \  t\ge T_0. 
\ee
To see this we note that for $x\le x'\le x+|I_x(t)|/2$ the inequality (\ref{V4}) is a consequence of the fact that there exists a constant  $\ga>0$ such that
\be \label{U4}
\int_x^1 w(z,t) dz \ \le \  (1+\ga)\int_{x'}^1 w(z,t) dz \quad {\rm for} \ x\le x'\le x+|I_x(t)|/2 \ .
\ee
 For $x+|I_x(t)|/2\le x'\le x+|I_x(t)|$  the inequality follows from (\ref{S4}) since $|I_{x'}(t)|\le 2|I_x(t)|$ if $T_0$ is sufficiently large.
It follows  from (\ref{R4}), (\ref{T4}), (\ref{V4}) that there exist positive constants $\al, C,T_0$
such that
\be \label{W4}
\beta(x,t) \ \ge \ C\beta(0,t)^\al \quad {\rm for \ } 0\le x<1, \ t\ge T_0 \ .
\ee

We proceed now in a manner similar to that  followed in the proof of Lemma 2.4.
We choose $\del_0$ with $0<\del_0<1$ such that  $C[1-\del_0]/\del_0>\kappa_0=\phi'(1)/\psi'(1)$, where $C$ is the constant in (\ref{RR2}). We also choose $\del_1$ satisfying $\del_0<\del_1<1$ such that the constant $C_2(\del_1)$ of Lemma 2.3 satisfies the inequality $C_2(\del_1)<\kappa_0$.  Finally we choose $\beta_1$ with $0<\beta_1<1$ such that
\be \label{X4}
\beta_1\psi(0)\sup_{\del_0\le \del\le\del_1} C_2(\del)/[1-\del] \ \le \ 1/2 \ . 
\ee
With $T_0$ as in (\ref{W4}) and assuming $\beta_1>0$ sufficiently small,  we may suppose that $T_0\le T_1<T_2$  are such that $\beta(0,T_1)=\beta_1$ and $\beta(0,t)<\beta_1$ for $T_1<t<T_2$. Let $T_3$ satisfy $T_1\le T_3\le T_2$ and have the property that  $\langle X_{t}\rangle \ \ge \ 1-\del_0$ for $T_1<t\le T_3$ and either $T_3=T_2$ or  $\langle X_{T_3}\rangle=1-\del_0$. It follows from (\ref{RR2}),  (\ref{AB2}), and (\ref{W4})  that there is a constant $C_1$ such that
\be \label{Y4}
\beta(0,t) \ \ge \ C_1\beta_1 \ , \quad T_1\le t\le T_3 \ .
\ee
To obtain a lower bound for $\beta(0,t)$ in the region $T_3\le t\le T_2$ we use the equation
\be \label{Z4}
\frac{d}{dt}\log   \ \langle X_t\rangle \ = \  \frac{\beta(0,t)\psi(0)\kappa(t)}{\langle X_t\rangle}-1 \ .
\ee
Since $\langle X_{T_3}\rangle=1-\del_0$, it follows from (\ref{X4})  that
\be \label{AA4}
\frac{d}{dt}\log \langle X_t\rangle \ \le \  -\frac{1}{2} \ , \quad T_3\le t \le \  T_4,
\ee
where $T_4$ has the property that $\langle X_t\rangle \ \ge \  1-\del_1$ for $T_3\le t\le T_4$ and $\langle X_{T_4}\rangle=1-\del_1$ or $T_4=T_2$. Evidently (\ref{AA4}) implies that
\be \label{AB4}
T_4-T_3 \ \le \ 2\log[(1-\del_0)/(1-\del_1)]; \quad \langle X_t\rangle \ \le \ 1- \del_0 \ {\rm for \ } T_3\le t\le T_2 \ .
\ee
From Lemma 2.3 and (\ref{AA4}) we have that $C_2(\del_1)\le \kappa(s)\le C_1(\del_0)$ for $T_3\le s\le T_4$. Hence there is a constant $C_1(\del_0,\del_1)$ such that
\be \label{AC4}
0 \ \le \ \int_{T_3}^t g(x(s),s) \ ds  \ \le  C_1(\del_0,\del_1),
\ee
on any solution of (\ref{A2}) with $x(t)=0$, where $T_3\le t\le T_4$. We conclude from (\ref{Y4}), (\ref{AC4}) that $\beta(0,t)\ge  C_2(\del_0,\del_1)\beta_1$ for $T_3\le t\le T_4$. 

Finally we consider the interval $T_4\le t\le T_2$. From (\ref{Z4}) and the assumption $\beta(0,t)<\beta_1$ it follows that $\langle X_t\rangle \ \le \ 1-\del_1$ for $T_4\le t\le T_2$. Assuming that $\del_1>1/2$, we see from (\ref{F2}) and the fact that $\beta_0\le 1$ that there exists $x_1$ such that $0<x_1<3(1-\del_1)$ and  $\beta(x_1, t)\ge 1/2$. Let  $x_0>0$ be the unique maximum of the function $\phi(x)$ in the interval $0<x<1$.  In addition to choosing $\del_1>1/2$ such that $C_2(\del_1)<\kappa_0$, we choose it sufficiently close to $1$ so that  $3(1-\del_1)<x_0$. Observe now that since $\beta(x_1, t)\ge 1/2$ it follows that
\be \label{AD4}
0 \ \le \ \int_{T_4}^t g(x_1(s),s) \ ds  \ \le  \log 2 \ ,
\ee
on any solution of (\ref{A2}) with $x_1(t)=x_1$, where $T_4\le t\le T_2$. Letting $x_2(\cdot)$ be the solution of (\ref{A2}) with $x_2(t)=0$, it follows from the fact that $3(1-\del_1)<x_0$, that
\be \label{AE4}
0 \ \le \ \int_{T_4}^t [g(x_2(s),s)-g(x_1(s),s)] \ ds  \ \le   C(\del_1) \ , 
\ee
for a constant $C(\del_1)$ depending only on $\del_1$. We conclude from (\ref{AD4}), (\ref{AE4}) that
$\beta(0,t)\ge  C_2(\del_0,\del_1)\beta_1$ for $T_4\le t\le T_2$. 

We have therefore proven  that there is a constant $C$ such that  $\beta(0,t)\ge C\beta_1$ for $T_1\le t\le T_2$.  We conclude that $\inf_{t\ge T_0} \beta(0,t)>0$, whence the result follows from (\ref{W4}).
\end{proof}
\begin{corollary}
Suppose that the function $\phi(\cdot)$, in addition to satisfying the assumptions of Lemma 4.1, also satisfies the condition $\lim_{x\ra 0} \phi(x)/x \ = \ \infty$ . 
Then there is a positive constant $C$ depending only on the initial data $w(x,0), \ 0\le x< 1,$ for (\ref{A1}), (\ref{B1}) such that $\kappa(t)\ge C$ for all $t\ge 0$. 
\end{corollary}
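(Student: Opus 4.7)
The plan is to prove the uniform bound $w(0,t)\le M^*$ for $t\ge 0$; once this is secured, (F1) together with the elementary estimates $\int_0^1[1+\phi'(x)]w(x,t)\,dx\ge 1$ (which follows from $\int w=1$ and $\int\phi'w=\int\phi c\ge 0$) and $\int_0^1\psi(x)c(x,t)\,dx\le \psi(0)w(0,t)$ immediately yields $\kappa(t)\ge 1/[\psi(0)w(0,t)]\ge 1/[\psi(0)M^*]$.

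First I would secure two-sided bounds on $\beta(\cdot,t)$. Lemma 4.1 furnishes $\beta_\infty>0$ and $T_0\ge 0$ with $\beta(x,t)\ge \beta_\infty$ for $t\ge T_0$, and evaluation at $x=0$ gives $c(0,t)\ge \beta_\infty w(0,t)^2$. For the other side, the characteristic inequality $\beta(x,t)\le\beta(F(x,t),0)$ used in the proof of Lemma 2.2 (a consequence of convexity of $F(\cdot,t)$), combined with $F(0,t)\to 1$ from Lemma 2.1 and the critical-data limit $\beta(y,0)\to\beta_0\le 1$, furnishes a constant $K\ge 1$ and $T_1\ge T_0$ with $\sup_x\beta(x,t)\le K$ for $t\ge T_1$. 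Integrating $\partial_x\log w=-\beta w/h$ with this upper bound gives $w(x,t)/w(0,t)\ge h(x,t)^K$, and combined with $h(x,t)\ge 1-xw(0,t)$ from monotonicity of $w$, one obtains $w(x,t)\ge \lambda w(0,t)$ on $[0,1/(4w(0,t))]$ with $\lambda=(3/4)^K$; then $c\ge \beta_\infty w^2/h\ge \beta_\infty w^2$ yields $c(x,t)\ge C_0 w(0,t)^2$ on the same interval.

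Next the singularity of $\phi$ enters. Since $\phi$ is concave with $\phi(0)=0$, the chord inequality gives $\phi(x)\ge (\phi(\eta)/\eta)x$ on $[0,\eta]$, hence $\int_0^\eta\phi\,dx\ge \phi(\eta)\eta/2$. Taking $\eta=1/(4w(0,t))$ and integrating against the $c$-lower bound yields a constant $C_1>0$ with $\int\phi c\,dx\ge C_1 w(0,t)\phi(1/(4w(0,t)))$ for $t\ge T_1$; (F1) then implies, writing $\rho(\eta):=\phi(\eta)/\eta$,
\[
\kappa(t)\,\psi(0)\,\beta_\infty\, w(0,t)\;\ge\; \frac{C_1\beta_\infty}{4}\,\rho(1/(4w(0,t))),\qquad t\ge T_1.
\]
Differentiating $\frac{d}{dt}w(0,t)=w(0,t)-\kappa(t)\psi(0)c(0,t)$ (from (A1)) and using $c(0,t)\ge \beta_\infty w(0,t)^2$, $w(0,\cdot)$ is strictly decreasing at $t$ whenever $\kappa(t)\psi(0)\beta_\infty w(0,t)>1$. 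The hypothesis $\phi(x)/x\to\infty$ means $\rho(\eta)\to\infty$ as $\eta\to 0$, so I may fix $M_0<\infty$ with $\rho(1/(4M_0))>4/(C_1\beta_\infty)$; for $t\ge T_1$ and $w(0,t)>M_0$, the displayed lower bound forces $\frac{d}{dt}w(0,t)<0$. Hence $w(0,t)\le \max\{w(0,T_1),M_0\}$ for $t\ge T_1$, and the trivial estimate on $[0,T_1]$ yields a uniform $M^*<\infty$.

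The principal obstacle is the upper bound $\beta(\cdot,t)\le K$. This hinges on propagating the boundary behavior $\beta(y,0)\to\beta_0\le 1$ at $y=1$ forward via the characteristic monotonicity $\beta(x,t)\le\beta(F(x,t),0)$ and quantifying this uniformly in $x$ using Lemma 2.1 to confine $F(x,t)$ to a small neighborhood of $1$. Once these two-sided $\beta$ bounds are in place, the rest is an algebraic combination of (F1), the resulting lower bounds on $w$ and $c$, and the chord estimate for concave $\phi$, with the divergence $\rho(\eta)\to\infty$ at the origin supplying the finite choice of $M_0$.
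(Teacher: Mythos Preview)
Your argument is correct and structurally parallel to the paper's: both proofs reduce to showing that $w(0,t)$ (equivalently $1/\langle X_t\rangle$) stays bounded, by writing down the evolution $\frac{d}{dt}w(0,t)=w(0,t)-\kappa(t)\psi(0)c(0,t)$, invoking Lemma~4.1 for $c(0,t)\ge\beta_\infty w(0,t)^2$, and then exploiting the blow-up of $\phi(x)/x$ near $0$ to force the right-hand side negative whenever $w(0,t)$ is large. The difference lies in how the estimate $\int\phi c\,dx\gtrsim w(0,t)\phi(1/w(0,t))$ is obtained. The paper works with the distribution function: it cites Lemma~1 of \cite{c} to assert $P(X_t>\gamma\langle X_t\rangle)\ge 1/2$ and then uses the integral identity $\langle\phi(X_t)\rangle-\phi'(1)\langle X_t\rangle=\int_0^1[\phi'(x)-\phi'(1)]P(X_t>x)\,dx$ to reach (4.15). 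You instead derive pointwise lower bounds on $w$ and then $c$ over the interval $[0,1/(4w(0,t))]$, for which you need the additional ingredient $\sup_x\beta(x,t)\le K$; you obtain this correctly from the characteristic inequality $\beta(x,t)\le\beta(F(x,t),0)$ together with Lemma~2.1 and $F(x,t)\ge F(0,t)$. Your route is more self-contained (it does not invoke the external Lemma~1 of \cite{c}) at the cost of an extra, though easy, step; the paper's is shorter but relies on that cited result.
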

\begin{proof}
From (\ref{N2}), (\ref{Z4}) we have that
\be \label{AF4}
\frac{d}{dt}\log   \ \langle X_t\rangle \ = \  \frac{\beta(0,t)\psi(0)}{\langle\psi(X_t)\rangle}\left[\frac{\langle\phi(X_t)\rangle}{\langle X_t\rangle}+1\right]-1 \ \ge \frac{ \beta_\infty \langle\phi(X_t)\rangle}{\langle X_t\rangle}-1 \ ,
\ee
for some $\beta_\infty>0$.  From Lemma 1 of \cite{c}  there exists a constant $\ga>0$  such that for $t\ge 0$ one has the inequality  $P(X_t>\ga\langle X_t\rangle)\ge 1/2$.  Hence we have that
\begin{multline} \label{AG4}
\langle\phi(X_t)\rangle-\phi'(1)\langle X_t\rangle \ =   \ \int_0^1 [\phi'(x)-\phi'(1)] P(X_t>x) \  dx \\
 \ge \ \frac{1}{2}\left[\phi(\ga \langle X_t\rangle)-\phi'(1)\ga\langle X_t\rangle\right] \ .
\end{multline}
It follows from (\ref{AF4}), (\ref{AG4}) that
\be \label{AH4}
\frac{d}{dt}\log   \ \langle X_t\rangle \ \ge \  \frac{ \beta_\infty \phi(\ga\langle X_t\rangle)}{2\langle X_t\rangle}+\phi'(1)[1-\ga/2]-1 \ ,
\ee
whence we conclude that there exists a positive constant $C$ such that $\langle X_t\rangle \ge C$ for $t\ge 0$. The result follows from (\ref{E2}). 
\end{proof}
The following proposition shows that if we assume  $\lim_{x\ra 0}\phi(x)/x<\infty$ then the lower bound of Corollary 4.1 may not hold for all initial data satisfying (\ref{U1}) with $\beta_0=1$. 
\begin{proposition}
Assume $\beta_0$ satisfies the inequality $0<\beta_0<1/[1+\phi'(0)]$, and  $w(x,t)$ is as in Lemma 4.1 with initial condition $w(x,0)=C(x_0-x)^{\beta_0/(1-\beta_0)}, \ 0\le x\le x_0; \quad w(x,0)=0$ for $x_0\le x\le 1$. There exists $\del(\beta_0)>0$ such that if $0<x_0<\del(\beta_0)$ then 
$\lim_{t\ra\infty} \kappa(t) \ = \ 0$ .
\end{proposition}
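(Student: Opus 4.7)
My plan is to show that when $x_0$ is small enough the support $[0,x_+(t)]$ of $w(\cdot,t)$ shrinks exponentially, which forces $\kappa(t)\to 0$. The idea is to exploit the fact that the initial data is exactly self-similar (a power law with constant beta function $\beta_0$), and that this structure is approximately preserved by the dynamics as long as the support remains in a small neighborhood of $0$; this reduces the PDE effectively to a scalar ODE for the right edge. Set $p=\beta_0/(1-\beta_0)$. Since $w_0(x)=C(x_0-x)^p$ on $[0,x_0]$, one has $\beta(\cdot,0)\equiv\beta_0$ on $[0,x_0]$; the right edge $x_+(t)$ is the characteristic starting at $x_0$, so $\dot x_+(t)=\phi(x_+(t))-\kappa(t)\psi(x_+(t))$. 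Integration by parts on the density $c(\cdot,t)/w(0,t)$ of $X_t$ produces the clean identity $\langle X_t\rangle=1/w(0,t)$.

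\textbf{Approximate linearized dynamics.} On any time interval where $x_+(t)\le\epsilon$, Taylor expansion near $0$ gives $\phi(x)=\phi'(0)x+O(\epsilon x)$ and $\psi(x)=\psi(0)+O(\epsilon)$ on $[0,x_+(t)]$, and (\ref{O1}) bounds the oscillation of $\phi'-\kappa\psi'$ on the support. It then follows from (\ref{K2}) that
\[
|g(x,t)|\le C(1+\kappa(t))\,x_+(t)
\]
uniformly in $x\in[0,x_+(t)]$. Integrating (\ref{J2}) backward along characteristics and using $\beta(\cdot,0)\equiv\beta_0$ yields
\[
\bigl|\log\beta(x,t)-\log\beta_0\bigr|\le C\int_0^t (1+\kappa(s))\,x_+(s)\,ds.
\]
When this right-hand side is of order $x_0$, (\ref{F2}) gives $\langle X_t\rangle=x_+(t)(1-\beta_0)(1+O(x_0))$, and (\ref{N2}) then yields
\[
\kappa(t)=\frac{\mu_*}{\psi(0)}\,x_+(t)+O\bigl(x_+(t)^2\bigr),\qquad \mu_*:=(1+\phi'(0))(1-\beta_0),
\]
so the characteristic ODE becomes
\[
\dot x_+(t)=\phi(x_+(t))-\kappa(t)\psi(x_+(t))=-\lambda\,x_+(t)+O\bigl(x_+(t)^2\bigr),
\]
with $\lambda:=1-\beta_0(1+\phi'(0))>0$ by the hypothesis $\beta_0<1/(1+\phi'(0))$.

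\textbf{Bootstrap and conclusion.} Define $T^*:=\sup\{T\ge 0:x_+(t)\le x_0 e^{-\lambda t/2}\text{ for every }t\in[0,T]\}$. On $[0,T^*]$ one has $\int_0^t x_+(s)\,ds\le 2x_0/\lambda$ and $\kappa(s)=O(x_0)$, so the bound above gives $|\log\beta(\cdot,t)-\log\beta_0|=O(x_0)$, and the $O(x_+(t)^2)$ error in the ODE is at most $(\lambda/4)x_+(t)$ once $x_0<\delta(\beta_0)$ is chosen small enough. Therefore $\dot x_+(t)\le -(3\lambda/4)x_+(t)$ on $[0,T^*]$, which strictly beats the bootstrap threshold; by continuity this forces $T^*=\infty$. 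Hence $x_+(t)\le x_0 e^{-\lambda t/2}\to 0$, and $\kappa(t)=O(x_+(t))\to 0$.

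\textbf{Main obstacle.} The technical heart of the argument is the uniform-in-time control $|\log\beta(\cdot,t)-\log\beta_0|=O(x_0)$, which relies on bounding the oscillation of $\phi'-\kappa\psi'$ on the shrinking support by $C(1+\kappa)\,x_+$; the regularity hypothesis (\ref{O1}) is what makes this estimate clean. This estimate closes a feedback loop---exponential decay of $x_+$ makes $\int_0^\infty x_+(s)\,ds$ finite and proportional to $x_0$, which keeps $\beta$ near $\beta_0$, which in turn justifies the approximate ODE that produced the exponential decay in the first place.
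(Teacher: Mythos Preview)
Your argument is essentially correct but takes a genuinely different route from the paper. You run a perturbative bootstrap: control $\beta(\cdot,t)$ close to $\beta_0$ by integrating the source $g$ along characteristics, use this to approximate $\kappa(t)$ and hence close an ODE $\dot x_+=-\lambda x_++o(x_+)$ for the right edge. The paper instead exploits a single monotonicity: since $F(\cdot,t)$ is convex and $\beta(\cdot,0)\equiv\beta_0$, one has $\beta(0,t)\le\beta(F(0,t),0)=\beta_0$ for \emph{all} $t$ without any bootstrap. Plugging this one-sided bound into the Jensen-based differential inequality (\ref{AK4}) for $\langle X_t\rangle$ gives immediately
\[
\frac{d}{dt}\log\langle X_t\rangle\ \le\ \frac{\beta_0\psi(0)\,[\langle X_t\rangle+\phi(\langle X_t\rangle)]}{\langle X_t\rangle\,\psi(\langle X_t\rangle)}-1\ \longrightarrow\ \beta_0(1+\phi'(0))-1<0
\]
as $\langle X_t\rangle\to0$, so $\langle X_t\rangle$ decays exponentially once $x_0$ is small, and Lemma~2.3 finishes. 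This is two lines instead of a bootstrap.

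One technical caveat in your version: the bound $|g(x,t)|\le C(1+\kappa(t))\,x_+(t)$ implicitly uses that the oscillation of $\phi'-\kappa\psi'$ on $[0,x_+]$ is $O((1+\kappa)x_+)$, i.e.\ that $\phi'',\psi''$ are bounded near $0$. The hypotheses (\ref{D1}), (\ref{E1}), (\ref{O1}) only give $C^2$ on $(0,1]$, and finiteness of $\phi'(0)$ does not by itself bound $\phi''$ near $0$ (e.g.\ $\phi(x)=x-x^{3/2}$). The paper's argument via convexity of $F$ sidesteps this entirely and needs only continuity of $\phi,\psi$ at $0$. Your approach does buy an explicit exponential rate for $x_+(t)$ and two-sided control of $\beta$, which the paper does not extract; but for the stated conclusion $\kappa(t)\to0$ the monotonicity shortcut is both shorter and more robust.
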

\begin{proof}
First observe that the linear approximation at $0$ to $\phi(x)-\kappa(t)\psi(x)$ is $\phi'(0)x-\kappa(t)\psi(0)$. The function $w(x,t)$ defined by
\begin{eqnarray} \label{AI4}
w(x,t)&=&Ce^{\la t}[x_0-xe^{\la t}]^{\beta_0/(1-\beta_0)} \quad {\rm for \ }  0\le x\le x_0 e^{-\la t} \ , \\ 
w(x,t) &=& 0 \qquad \qquad \qquad \qquad \qquad  \ \  {\rm for \ }  x_0 e^{-\la t}\le x\le 1  \ ,
\end{eqnarray}
is a solution to (\ref{A1}), (\ref{B1}) in this linear approximation provided $\la=1-\beta_0[1+\phi'(0)]>0$. In that case $\kappa(t)$ is given by the formula
\be \label{AJ4}
\kappa(t) \ = \ e^{-\la t}[1+\phi'(0)] (1-\beta_0)x_0/\psi(0) \ .
\ee

To prove that $\lim_{t\ra\infty} \kappa(t)=0$ more generally, one uses the equation (\ref{AF4}). From the argument  of Lemma 2.3 we see that
\be \label{AK4}
\frac{d}{dt}\log \langle X_t\rangle \ \le \  \frac{\beta(0,t)\psi(0)[\langle X_t\rangle+\phi(\langle X_t\rangle)]}{\langle X_t\rangle\psi(\langle X_t\rangle)}-1 \ .
\ee
The result follows from (\ref{AK4}) and Lemma 2.3 since $\beta(0,t)\le \beta_0$ for all $t\ge 0$. 
\end{proof} 
\begin{rem}
It is easy to construct initial data $w(x,0), \ 0\le x\le 1,$ for (\ref{A1}), (\ref{B1}) with support equal to the full interval $[0,1]$, the property
$\lim_{x\ra 1}\beta(x,0)=1$, and such that $w(\cdot,0)$ is arbitrarily close to the initial data of Proposition  4.2. In fact we can define $\beta(x,0)$ by
\be \label{AL4}
\beta(x,0)=\beta_0 \  \ {\rm for \ } 0\le x\le x_0, \quad \beta(x,0)=1-\ve(1-x) \  \ {\rm for \ } x_0<x\le 1, 
\ee
where $\ve<<1$. Note in this case the discontinuity in $\beta(x,0)$ at $x=x_0$.  In $\S6$ we are able to obtain a positive lower bound on $\inf\kappa(\cdot)$ for such initial data since $\beta(x,0)\le 1$ for $x$ close to $1$. We are not however able to obtain a lower bound if $\beta(x,0)$ oscillates above and below $1$ as $x\ra 1$. 
\end{rem}
\begin{lem}
Let $w(x,t), \ x,t\ge 0$, be the solution to (\ref{A1}), (\ref{B1})  with coefficients satisfying (\ref{D1}), (\ref{E1}), (\ref{O1}). Assume the initial data $w(\cdot,0)$ has beta function $\beta(\cdot,0)$ satisfying  (\ref{U1}) with $\beta_0=1$.  Then the limit (\ref{J1}) holds provided $\inf\kappa(\cdot)>0$ and
\be \label{AM4}
\inf_{t\ge 0} w(x,t)>0 \quad  {\rm  for \  all \ } x {\rm \ satisfying \ } 0\le x<1.
\ee
\end{lem}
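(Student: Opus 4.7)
The plan is to adapt the two-sided estimates from the proof of Lemma 2.5 to the critical case, using the hypotheses $\inf\kappa(\cdot)>0$ and $\inf_{t\ge 0} w(x,t)>0$ for $x<1$ as substitutes for the automatic bounds available when $\beta_0<1$. The first preliminary step is a uniform upper bound on $w(0,t)$. Evaluating (\ref{A1}) at $x=0$ and using $c=-\partial w/\partial x$ together with $h(0,t)=1$ gives
\[
\frac{d}{dt}\log w(0,t) \ = \ 1 - \kappa(t)\psi(0)\beta(0,t) w(0,t).
\]
The hypothesis $\inf\kappa>0$ combined with Lemma 4.1, which gives $\inf\beta(\cdot,t)\ge\beta_\infty>0$ for $t\ge T_0$, implies $\kappa(t)\beta(0,t)$ is bounded below by a positive constant, and a Riccati-type analysis of the ODE yields $\sup_{t\ge 0} w(0,t)\le C_{\max}$. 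The lower bound $w(0,t)\ge 1$ is automatic from mass conservation and monotonicity.

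For the upper bound $\limsup_T T^{-1}\int_0^T \kappa\,ds \le \kappa_0$, I note that $\beta_0=1$ combined with Lemma 1 of \cite{c} yields, for every $p>0$, a constant $C_p$ with $w(x,T_0)\le C_p(1-x)^p$ on $[0,1)$. From $w(0,t)\ge 1$ and the characteristic identity $w(0,t)=e^{t-T_0} w(F(0,t-T_0;T_0), T_0)$, one deduces $1-F(0,t-T_0;T_0)\ge c_p e^{-(t-T_0)/p}$. Combining with the time-shifted version of (\ref{AH2}) gives $T^{-1}\int_0^T \kappa \le \kappa_0+1/(p|\psi'(1)|)+O(1/T)$, and letting first $T\to\infty$ and then $p\to\infty$ produces the claim.

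For the matching lower bound, convexity of $F(\cdot,t)$ gives
\[
1-F(0,t)\ \ge\ \partial F(0,t)/\partial x \ = \ \exp\bigl[-\textstyle\int_0^t\{\phi'(x(s))-\kappa(s)\psi'(x(s))\}\,ds\bigr],
\]
where $x(s)$ is the backward characteristic with $x(t)=0$. Using the Taylor expansions $\phi'(x)-\phi'(1)=O(1-x)$ and $\psi'(x)-\psi'(1)=O(1-x)$, the error from replacing the integrand by its $x=1$ value is bounded by $C\int_0^t(1-x(s))(1+\kappa(s))\,ds$. Rewriting this error integral in the variable $y=1-x(s)$ via $ds=dy/[\kappa\psi(1-y)-\phi(1-y)]$ and using the expansion $\kappa\psi(1-y)-\phi(1-y)\ge\gamma y^2/2$ near $y=0$ yields a bound of order $|\log(1-F(0,t))|$, which by the preceding step is $o(t)$. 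Hence $\log(1-F(0,t))\ge -\phi'(1)t-|\psi'(1)|\int_0^t \kappa\,ds + o(t)$, and since $\log(1-F(0,t))\le 0$, this gives $\liminf T^{-1}\int_0^T\kappa\ge\kappa_0$.

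The main obstacle is the error estimate in the final step. In the subcritical argument of Lemma 2.5 the backward characteristic converges to $x=1$ exponentially, so $\int_0^t(1-x(s))\,ds=O(1)$; in the critical case the convergence is only algebraic, of order $1/(t-s)$ when $\kappa\approx\kappa_0$, so the error integral is genuinely unbounded in $t$. The key observation that rescues the argument is that the $\limsup$ bound already forces $|\log(1-F(0,t))|=o(t)$, keeping the error subleading even though it is no longer $O(1)$. The hypotheses $\inf\kappa>0$ and $\inf_t w(x,t)>0$ enter both to establish the $w(0,t)$ bound of the first step (via Lemma 4.1) and to guarantee that the $\kappa$-weighted characteristic integrals remain under control, preventing pathological behavior where the second-order term $\gamma y^2/2$ fails to dominate.
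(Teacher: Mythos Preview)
Your overall strategy matches the paper's, and the upper-bound half is essentially correct.  The paper packages the same faster-than-polynomial decay by introducing $z(t)$ with $e^tw_0(z(t))=1$, showing $\log[1-z(t)]/t\to 0$, and sandwiching $z(t-t_0)\le F(0,t)\le z(t)$; the first inequality comes from $w(0,t)\ge 1$, the second from $\inf\kappa>0$ via Lemma 2.3, which already yields your bound on $w(0,t)$ without the Riccati argument.

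The genuine gap is in your lower-bound error estimate.  The change of variables $ds=dy/[\kappa(s)\psi(1-y)-\phi(1-y)]$ and the inequality $\kappa\psi(1-y)-\phi(1-y)\ge\gamma y^2/2$ both tacitly require $\kappa(s)\ge\kappa_0$.  When $\kappa(s)<\kappa_0$ the linear term of the Taylor expansion at $y=0$ is $-(\kappa_0-\kappa)|\psi'(1)|y<0$, so the expression is negative for small $y$, the characteristic $x(s)$ fails to be monotone, and the change of variables collapses.  The hypothesis $\inf\kappa(\cdot)>0$ does not force $\kappa\ge\kappa_0$, so this is not a technicality, and your closing sentence does not supply a repair.

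The paper sidesteps this by not changing variables.  It combines the inequality $h(x(t-\tau),t-\tau)\le e^{-\tau(1+\phi'(1))}$ (which holds for \emph{any} positive $\kappa$, exactly as in (\ref{AJ2})) with the statement (\ref{AQ4}): for every $\varepsilon>0$ there is $\delta>0$, uniform in $t$, such that $h(x,t)<\delta$ implies $1-x<\varepsilon$.  This is precisely what (\ref{AM4}) provides, since $1-x\ge\varepsilon$ forces $h(x,t)\ge (\varepsilon/2)\inf_{s}w(1-\varepsilon/2,s)>0$.  Together these give a $\tau_\varepsilon$ independent of $t$ with $1-x(s)<\varepsilon$ for all $s\le t-\tau_\varepsilon$; the error integral then splits into an $O(\varepsilon t)$ piece on $[0,t-\tau_\varepsilon]$ (using $\sup\kappa<\infty$) and a bounded piece on $[t-\tau_\varepsilon,t]$, giving $o(t)$ as needed.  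This is where (\ref{AM4}) is actually used, not merely to control $w(0,t)$.
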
 
\begin{proof}  We define a function $z(t), \ t\ge 0$, by $e^t w_0(z(t))=1$. Since the conservation law (\ref{B1}) implies that $w(0,t)\ge 1$ we conclude that $z(t)\ge F(0,t), \ t\ge 0$. Observe also from (\ref{D4}) that $z(t)$ satisfies the differential equation
\be \label{AN4}
\frac{d z(t)}{dt} \ = \ \frac{g(z(t))}{\beta(z(t),0)}, \quad t\ge 0,
\ee
where $g(\cdot)$ is the function (\ref{B4}). Next we have from (\ref{AH2}) that
\be \label{AO4}
1-z(t) \ \le \ \exp\left[ -\phi'(1)t+\psi'(1)\int_0^t\kappa(s) ds\right] \ , \quad t\ge 0.
\ee
Since $\lim_{x\ra 1} \beta(x,0)=1$ it follows from (\ref{B4})  that
\be \label{AP4}
\lim_{t\ra\infty} \log[1-z(t)]/t \ = \ 0.
\ee
Hence we obtain the  upper bound (\ref{AI2}) in the case $\beta_0=1$.

To prove the lower bound (\ref{AM2}) for $\beta_0=1$ we first note that Lemma 2.3 implies that there is a positive constant $t_0$ depending only on the initial data such that $ \langle X_t\rangle \ \ge e^{-t_0}$ for all $t\ge 0$, whence $e^{t-t_0}w_0(F(0,t))\le 1$. We conclude that  $z(t-t_0)\le F(0,t)$ for all $t\ge 0$. The final fact we need in analogy to (\ref{AL2}) is that for any $\ve>0$ there exists $\del>0$ depending only on the initial data $w_0(\cdot)$ such that for any $t\ge 0$,
\be \label{AQ4}
\int_x^1 w(z,t) \ dz \ < \ \del \quad {\rm implies \ } 1-x<\ve \ .
\ee
It is easy now to conclude (\ref{AM2}) for $\beta_0=1$. Finally we note that (\ref{AM4}) implies (\ref{AQ4}).
\end{proof}

\vspace{.1in}
\section{The Quadratic Model-Critical Case}
We return to the quadratic model studied in $\S3$.
\begin{lem}
Assume the initial data $w_0(\cdot)$ for (\ref{A1}), (\ref{B1}) satisfies $\lim_{x\ra 1}\beta(x,0)=1$ and $w(x,t)=e^tw_0(F(x,t))$, where $F(x,t)$ is given by the formula (\ref{E3}). Then $\lim_{t\ra\infty} u(t)/v(t)=0$ if and only if there are constants $C_1,C_2>0$ such that $C_1\le\kappa(t)\le C_2$ for all $t\ge 0$.  
\end{lem}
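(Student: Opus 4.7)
The argument rests on two identities. Differentiating (\ref{FF3}) yields
\[
\kappa(t) \ = \ \kappa_0 + \frac{u'(t)}{|\psi'(1)|\,u(t)},
\]
so that $\kappa(t)$ being bounded in an interval $[C_1,C_2]$ with $C_1>0$ is equivalent to $u'/u$ being uniformly bounded above and below. The conservation law $e^t G(u(t),v(t))=1$ provides the second, nonlinear constraint on the trajectory $(u(t),v(t))$.

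For the direction $(\Leftarrow)$, I start from $C_1\le\kappa\le C_2$, which gives $|u'/u|$ bounded. The hypothesis $\lim_{x\to 1}\beta(x,0)=1$ implies, via $(h_0/w_0)'=\beta(\cdot,0)-1$ together with $h_0/w_0\to 0$ as $x\to 1$ (which follows from $h_0(x)\le w_0(x)(1-x)$), that $h_0(x)/w_0(x)=g(x)$ for the function $g$ of (\ref{B4}). Since $g(x)\to 0$ as $x\to 1$, the relation $w_0=h_0/g$ and the first order ODE $(\log h_0)'=-1/g$ give that $w_0(x)$ decays faster than any power of $1-x$ as $x\to 1$. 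Consequently $G(u,v)$ decays faster than any negative power of $u+a$ as $u+a\to\infty$, and the identity $G(u,v)=e^{-t}$ forces $u(t)+a(t)=O(t)$. Writing $a=a_1(u-1)+a_2'v$ with $a_2'>0$ and using $a\ge 0$ (from (\ref{FF3})), this linear bound precludes exponential growth of $u$, since otherwise $v$ would be driven negative. Combined with boundedness of $|u'/u|$, which forbids $u$ from having narrow spikes, one obtains $u(t)=O(t)$ and $v(t)\ge ct$ for some positive constant $c$ and all sufficiently large $t$, hence $u/v\to 0$.

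For the direction $(\Rightarrow)$, I assume $u/v\to 0$. The mass conservation $w(0,t)\ge 1$ from (\ref{B1}) translates via $w(0,t)=e^t w_0(F(0,t))$ and (\ref{E3}) into a lower bound on $u(t)$ sufficient to guarantee $v(t)\to\infty$. Dividing (\ref{FF3}) by $v(t)$ and passing to the limit gives the weighted time-average $v(t)^{-1}\int_0^t\kappa(s)u(s)\,ds\to\kappa_0$. To upgrade this to uniform pointwise control of $\kappa$, I argue by contradiction: if $\kappa(t_n)\to\infty$ along a sequence, then $u'(t_n)/u(t_n)\to\infty$ by the identity above; differentiating the conservation law $e^t G(u,v)=1$ yields $u'=-(G+uG_v)/G_u$, and substituting the asymptotic forms of $G,G_u,G_v$ derived in the first direction would force $u(t_n)/v(t_n)$ to stay bounded away from zero, a contradiction. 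The case $\kappa(t_n)\to 0$ is treated symmetrically using the corresponding lower asymptotics.

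The main obstacle is making the asymptotic analysis of $G(u,v)$ and its partial derivatives as $u+a\to\infty$ sufficiently quantitative to close both directions. The hypothesis $\beta_0=1$ provides only the soft statement that $w_0$ decays super-polynomially, so the required precise decay estimates on $G$, $uG_v$, $G_u$ must be extracted carefully from the explicit formula $G(u,v)=\int_0^1 w_0(1-y/(u+ay))\,dy$ using the relation $h_0/w_0=g$ to express $w_0$ asymptotically in terms of the measurable function $g$.
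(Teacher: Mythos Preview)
Your proposal has a genuine gap in the $(\Leftarrow)$ direction. From super-polynomial decay of $w_0$ near $1$ you correctly deduce that $G(u,v)$ decays faster than any negative power of $u+a$, but this does \emph{not} force $u(t)+a(t)=O(t)$: for each $N$ you only get $(u+a)^N \le C_N e^{t}$, i.e.\ $u+a = o(e^{\epsilon t})$ for every $\epsilon>0$, which is far weaker than linear growth. Without $u+a=O(t)$ the rest of your chain collapses; in particular you have no way to conclude $u/v\to 0$ from boundedness of $u'/u$ alone (that only gives $u/v$ bounded, as a one-line integration shows). In the $(\Rightarrow)$ direction you yourself flag the obstacle: the contradiction arguments require precise two-sided asymptotics of $G$, $G_u$, $uG_v$ in the critical regime, and since $\beta_0=1$ gives no quantitative decay rate for $w_0$, it is not clear these asymptotics can be extracted at the level of generality needed.

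The paper sidesteps all of this by never attempting global asymptotics of $G$. For $(\Leftarrow)$ it uses Lemma 2.3 to get $\langle X_t\rangle\ge C_3>0$, hence $w(\gamma,t)/w(0,t)\ge 1/e$ for some fixed $\gamma>0$; writing this ratio as $w_0(F(0,t)+[F(\gamma,t)-F(0,t)])/w_0(F(0,t))$ and invoking the slowly-varying property (\ref{C4}) gives $F(\gamma,t)-F(0,t)\le 2g(F(0,t))=o(1-F(0,t))$. But from (\ref{E3}) one computes this ratio \emph{exactly} as $\gamma u/[u+(1-\gamma)a]$, so $u/a\to 0$, and $a\le Cv$ from $\sup\kappa<\infty$ finishes it. For $(\Rightarrow)$ the paper introduces the tracking variable $y(t)$ with $1-y(t)=1/a_1(t)$, computes $F(x,t)-y(t)$ explicitly, and compares $y(t)$ to the function $z(t)$ defined by $e^t w_0(z(t))=1$; the offset $\tau(t)$ in $y(t)=z(t-\tau(t))$ is then shown to lie in a compact interval, which via (\ref{C4}) and Lemma 2.3 yields the two-sided bound on $\kappa$. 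The key idea you are missing is to exploit the explicit rational structure of $F(\cdot,t)$ together with (\ref{C4}) rather than trying to invert the integral $G$.
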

\begin{proof}
We first assume $C_1\le\kappa(\cdot)\le C_2$, whence Lemma 2.3 implies that there exists  $C_3>0$ such that $\langle X_t\rangle\ge C_3$ for all $t\ge 0$.  We conclude then from Lemma 1 of \cite{c} that there exists $\gamma>0$ such that
\be \label{A5}
w(\gamma,t)/w(0,t) \ \ge \ 1/e,  \quad 0\le t<\infty .
\ee
Next we write
\be \label{B5}
w(\gamma,t)/w(0,t) \ = \ w_0(F(0,t)+[F(\gamma,t)-F(0,t)])/ w_0(F(0,t)) \ .
\ee
Since $\lim_{t\ra\infty} F(0,t)=1$, it follows from (\ref{C4}), (\ref{A5}) that there exists $T_0\ge 0$ such that
\be \label{C5}
F(\gamma,t)-F(0,t) \ \le \ 2g(F(0,t)) \ , \quad t\ge T_0 \ .
\ee
Using the fact that $\lim_{x\ra 1}\beta(x,0)=1$, we conclude that
\be \label{D5}
\lim_{t\ra\infty} \frac{F(\gamma,t)-F(0,t)}{1-F(0,t)} \ = \ 0 \ .
\ee
We see from the identity
\be \label{E5}
\frac{F(\gamma,t)-F(0,t)}{1-F(0,t)} \ = \  \frac{\gamma u(t)}{u(t)+(1-\gamma)a(t)} \ ,
\ee
and (\ref{D5}) that $\lim_{t\ra\infty}u(t)/a(t)=0$. Since (\ref{FF3}) implies that
\be \label{F5}
a(t) \  \le \{\psi''(1)\sup\kappa(\cdot)+2|\phi'(1)|\}v(t)/2 \ ,
\ee
we conclude that $\lim_{t\ra\infty} u(t)/v(t)=0$. 

Conversely let us assume that $\lim_{t\ra\infty} u(t)/v(t)=0$. Since $\lim_{t\ra\infty} F(0,t)=1$ we also have that $\lim_{t\ra\infty}[u(t)+a(t)]=\infty$, and hence we conclude that $\lim_{t\ra\infty} v(t)=\infty$. We define now $y(t)$ by
\be \label{G5}
y(t)= 1-1/a_1(t)=1- 2|\psi'(1)|/\{ \ |\phi'(1)| [\psi''(1)-2\psi'(1)]v(t)-\psi''(1) \ \} \ ,
\ee
and observe that $y(t)$ is an increasing function of $t$ which satisfies
\be \label{H5}
 \lim_{t\ra\infty} y(t)=1, \quad \frac{dy(t)}{dt} \ = \ \frac{\{ \ |\phi'(1)| \psi''(1)/2|\psi'(1)|+1\}u(t)}{a_1(t)^2} \ .
\ee
One can further  see that
\be \label{I5}
F(x,t)-y(t) \ = \ \frac{\{1+(1-x)\psi''(1)/2|\psi'(1)|\}u(t)}{a_1(t)[a_1(t)(1-x)+\{1+(1-x)\psi''(1)/2|\psi'(1)|\}u(t)]} \ ,
\ee
and hence we conclude that there are positive constants $C,T_0$ such that
\be \label{J5}
 F(x,t)-y(t) \ \le \ C\frac{dy(t)}{dt} \ , \quad {\rm for \ } t\ge T_0, \ 0\le x\le 1/2.
\ee
Let $z(t), \ t\ge0,$ be as in Lemma 4.2, whence $F(0,t)\le z(t), \ t\ge 0$.  Suppose now that at some $t\ge T_0$ one has $y(t)=z(t-\tau_0)$ where $\tau_0>0$. Then for $0\le x\le 1/2$ we have that
\be \label{K5}
e^{t}w_0(F(x,t)) \ \ge \ \e^{t} w_0\left(y(t)+C\frac{dy(t)}{dt}\right) \ 
=  \ e^{\tau_0} \frac{w_0(y(t)+Cdy(t)/dt)}{w_0(y(t))} \ .
\ee
Since (\ref{B1}) implies that the LHS of (\ref{K5}) is bounded above by $2$ when $x=1/2$, we conclude from (\ref{C4}) that if $\tau_0\ge 1+2C+\log 2$ and $T_0$ is sufficiently large then $dy(t)/dt \ge 2g(y(t))$. Hence if $y(T_0)\ge z(T_0-\tau_0)$ then $y(t)\ge z(t-\tau_0)$ for all $t\ge T_0$. Since (\ref{I5}) implies that $y(t)<F(0,t)$ we further have that $z(t-\tau_0)\le y(t)\le z(t)$ for $t\ge T_0$. 
We conclude therefore that
\be \label{L5}
\langle X_t\rangle \ = \ \frac{1}{e^tw_0(F(0,t))} \ \ge \ \frac{1}{e^t w_0(y(t))} \ \ge \ e^{-\tau_0} \ ,  \quad t\ge T_0.
\ee
Now Lemma 2.3 and (\ref{L5}) imply that $\inf \kappa(\cdot)>0$. 

To see that $\sup \kappa(\cdot)<\infty$,  we observe from (\ref{H5}), (\ref{I5}) that there are positive constants $\al,\beta$ with the property
\be \label{M5}
F(x,t) \ = \ y(t)+  \frac{\al+\beta(1-x)}{(1-x)+o(t)}\frac{dy(t)}{dt}, \quad 0\le x<1,
\ee
where $\lim_{t\ra\infty} o(t)=0$. It is easy to see that there exists $T_0>0$ such that $y(t)<F(0,t)<z(t)$  for $t\ge T_0$, whence  $y(t)=z(t-\tau(t))$ for some unique $\tau(t)> 0$. We show there are constants $\tau_1,\tau_2>0$ such that
\be \label{N5}
 \tau_1\le\tau(t) \ \le  \tau_2, \qquad t\ge T_0  \ .
\ee
To obtain the upper bound in (\ref{N5}) note that from (\ref{B1}),  (\ref{C4}), (\ref{M5}) there exists $\tau_2>0$ and  $T_1\ge T_0$ with the property that
\be \label{O5}
\tau(t)\ge \tau_2 \quad {\rm implies \ } \frac{dy(t)}{dt}\ge 2g(y(t)) \quad {\rm for \ } t\ge T_1.
\ee
Hence if $t_2\ge T_1$ and $\tau(t_2)>\tau_2$ then from (\ref{AN4}) and (\ref{O5}) we see that for sufficiently large $T_1$ and  $t\ge t_2$ satisfying  $\inf_{t_2\le s\le t} \tau(s)\ge \tau_2$, then
\be \label{P5}
y(t) \ \ge \ z( 3(t-t_2)/2 +t_2-\tau(t_2)) \quad {\rm which \  implies \ } \tau(t)\le \tau(t_2)-(t-t_2)/2 \ .
\ee
The upper bound in (\ref{N5}) follows. To obtain the lower bound observe again from (\ref{B1}), (\ref{C4}), (\ref{M5}) that there exists $\tau_1>0$ and  $T_1\ge T_0$ with the property that
\be \label{Q5}
\tau(t)\le \tau_1 \quad {\rm implies \ } \frac{dy(t)}{dt}\le g(y(t))/2 \quad {\rm for \ } t\ge T_1.
\ee
The lower bound in (\ref{N5}) follows from (\ref{Q5})  by analogous argument  for the upper bound.  

Assuming (\ref{N5}) holds, we show there exists $T_2\ge T_0$ and $\del>0$  such that
\be \label{R5}
e^tw_0(F(0,t)) \ \ge \ 1+\del, \quad t\ge T_2.
\ee
Thus from (\ref{C4}), (\ref{M5}) we see that for any $\eta$ with $0<\eta<1$ there exists $T_\eta\ge T_0$ such  that
\be \label{S5}
\int_0^{1-\eta} \exp\left[ -\frac{\al+\beta(1-x)}{(1-x)g(y(t))}\frac{dy(t)}{dt} \right] \ dx \ \le  \ e^{-\tau_1/2} \quad {\rm for \ } t\ge T_\eta \ .
\ee
Choosing $\eta<[1-e^{-\tau_1/2}]/2$ in (\ref{S5}) and putting $T_2=T_\eta$, we see that there is a constant $C(\tau_1)>0$ depending only on $\tau_1$ such that
\be \label{T5}
\frac{dy(t)}{dt} \ \ge \ C(\tau_1)g(y(t)) \quad {\rm for \ } t\ge T_2 \ .
\ee
Now (\ref{C4}), (\ref{M5}) and (\ref{T5}) imply that there exists $\del>0$ such that
\be \label{U5}
w_0(F(1/2,t)) \ \le \ \frac{1-\del}{1+\del} \ w_0(F(0,t)) \quad {\rm for \ } t\ge T_2 \ .
\ee
The inequality (\ref{U5}) and (\ref{B1}) imply (\ref{R5}). Since (\ref{R5}) implies that $\langle X_t\rangle \le  1/(1+\del)<1$ for $ t\ge T_2$, we see from Lemma 2.3 that $\sup\kappa(\cdot)<\infty$. 
\end{proof}
\begin{lem}
Assume the initial data $w_0(\cdot)$ for (\ref{A1}), (\ref{B1}) satisfies $\lim_{x\ra 1}\beta(x,0)=1$ and $w(x,t)=e^tw_0(F(x,t))$, where $F(x,t)$ is given by the formula (\ref{E3}). Then $\lim_{t\ra\infty} u(t)/v(t)=0$.
\end{lem}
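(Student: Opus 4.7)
My approach combines mass conservation, the critical hypothesis $\lim_{x \ra 1}\beta(x,0)=1$, and an algebraic identity specific to the quadratic model. Setting $q(x) = -\log h_0(x)$ where $h_0(x) = \int_x^1 w_0$, and making the successive changes of variables $r = 1-x$, $s = r/(u+ar)$ and $y = q(1-s)$ in $\int_0^1 e^t w_0(F(x,t))\,dx = 1$, I obtain
\begin{equation*}
e^t u(t)\int_{Y(t)}^\infty \frac{e^{-y}}{\bigl(1 - a(t)(1-q^{-1}(y))\bigr)^2}\,dy \ = \ 1, \qquad Y(t) := q\bigl(1 - 1/(u+a)(t)\bigr).
\end{equation*}
Since the weight factor lies in $[1, (u+a)^2/u^2]$ on $y \ge Y$, integrating $e^{-y}$ from $Y$ to $\infty$ gives the two-sided bound
\begin{equation*}
\log u(t) \ \le \ Y(t) - t \ \le \ 2\log(u(t)+a(t)) - \log u(t).
\end{equation*}

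The critical assumption $\beta(x,0) = 1 - q''(x)/q'(x)^2 \ra 1$ is equivalent to $(1/q')'(x) \ra 0$ as $x \ra 1$; integrating, $1/q'(x) = o(1-x)$, hence $q(1-\ve)/\log(1/\ve) \ra \infty$ as $\ve \ra 0^+$. Since quadratic $\phi,\psi$ satisfy (D1), (E1), (O1) trivially (with $\phi''' \equiv \psi''' \equiv 0$), Lemma 4.1 applies and yields $\kappa(t) \le C_2$, so $u(t), v(t) \le Ce^{K_1 t}$ for some $K_1$. A contradiction argument then shows $\log Z(t) = o(t)$ for $Z := u+a$: if instead $Z(t_k) \ge e^{\al t_k}$ along a subsequence, then $Y(t_k) \ge q(1-e^{-\al t_k})$ would grow faster than any linear function of $t_k$ by the super-logarithmic growth of $q$, contradicting the linear upper bound $Y(t) = O(t)$ from the preceding display.

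The final ingredient is the algebraic identity, special to the quadratic case,
\begin{equation*}
\frac{dZ}{dt} \ = \ (1+a_1)|\psi'(1)|\,u(t)\kappa(t),
\end{equation*}
which follows from the coincidence $a_2 = (1+a_1)|\phi'(1)|$ visible in (JJ3). Since $\kappa \ge 0$, $Z$ is monotone non-decreasing, and since $1-F(0,t) = 1/Z(t) \ra 0$ by Lemma 2.1, $Z \ra \infty$, which in turn forces $v(t) \ra \infty$ (else $\int_0^\infty u\kappa$ would be finite and $Z$ bounded). Since $a_2 v \le Z + a_1$, we also get $\log v(t) = o(t)$. To conclude $u/v \ra 0$, I analyze the logistic equation $\xi' = 1 - K_\kappa(t)\xi$ for $\xi = v/u$, where $K_\kappa = \phi'(1) + |\psi'(1)|\kappa$: any subsequence along which $\xi$ stayed bounded would, via the identity above together with the Lipschitz control $|\xi'| \le 1 + K_1\xi$ coming from Lemma 4.1, force $v' = u$ to grow $v$ exponentially, contradicting $\log v = o(t)$. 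The main obstacle is this last step, where the integral-averaged statement $\int_1^t (u/v)\,ds = \log v(t) + O(1) = o(t)$ must be upgraded to pointwise convergence $u/v \ra 0$; the rigidity afforded by the quadratic identity for $dZ/dt$ is essential for ruling out residual oscillations of $u/v$.
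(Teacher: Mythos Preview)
Your first several steps are correct: the two-sided bound $\log u \le Y(t)-t \le 2\log(u+a)-\log u$, the super-logarithmic growth of $q$ from $\beta_0=1$, the appeal to Lemma~4.1 for $\kappa\le C_2$, and the contradiction giving $\log Z(t)=o(t)$ are all sound. The identity $dZ/dt=(1+a_1)|\psi'(1)|u\kappa$ is also correct and shows $Z$ is monotone.

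The gap is precisely where you flag it: the passage from the integral statement $\int_1^t(u/v)\,ds=\log v(t)+O(1)=o(t)$ to the pointwise conclusion $u/v\to 0$. Your proposed mechanism---that a subsequence with $\xi=v/u$ bounded, plus the Lipschitz control $|\xi'|\le 1+K_1\xi$, forces exponential growth of $v$---only works if $\xi$ stays bounded for \emph{all} large $t$. If $\xi$ oscillates, dipping to values $\le M$ along a sparse sequence $t_k$ while spending most of its time very large, then $(\log v)'=1/\xi$ can have $o(t)$ integral even though $\limsup u/v>0$; the Lipschitz bound only pins $\xi$ down on intervals of fixed length near each $t_k$. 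Your ``quadratic identity'' for $dZ/dt$ is equivalent to the defining relation~(\ref{C3}) and does not by itself rule out such oscillations of $\kappa$ (hence of $K_\kappa$, hence of $\rho=u/v$ via $\rho'=K_\kappa\rho-\rho^2$).

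The paper closes this gap by a different route. After getting $u\le C_0v$ eventually from the ODE~(\ref{V5}) and~(\ref{W5}), it works with $y(t)$ of~(\ref{G5}) and compares it to $z(t)$ defined by $w_0(z(t))=e^{-t}$. Using the conservation law through the pointwise formula~(\ref{M5}) together with~(\ref{C4}), it obtains the \emph{pointwise} inequality $dy/dt\le C_2\,g(y(t))$ (see~(\ref{X5})). Since $dy/dt\sim Cu/v^2$ from~(\ref{H5}) while $g(y)=o(1-y)=o(1/v)$ by $\beta_0=1$, this yields $u/v\to 0$ directly, with no need to upgrade an averaged statement. The point is that~(\ref{M5}) lets the conservation law constrain $dy/dt$ at each $t$, which your integral bounds on $Y(t)$ cannot do.
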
  
\begin{proof}
Observe that since $v(t)$ is an increasing function one has $\lim_{t\ra\infty} v(t)=v_\infty$ where $0<v_\infty\le \infty$.  If $v_\infty<\infty$ then it follows from (\ref{D3})  that there is an increasing sequence $t_m$ with $\lim_{m\ra\infty} t_m=\infty$ and  $u(t_m)\le 1$. In that case (\ref{E3}) implies that $\liminf_{t\ra\infty}  F(0,t)<1$, which is a contradiction to Lemma 2.1. We conclude that $\lim_{t\ra\infty} v(t)=\infty$. 

Next we show that there exist constants  $C_0,  \ T_0>0$ such that $u(t)\le C_0v(t)$ for all $t\ge T_0$. To see this we set $\xi(t)=v(t)/u(t)$ and note from (\ref{D3}), (\ref{H3}) that
\be \label{V5}
\frac{d\xi(t)}{dt} \ = \ \frac{u(t)G_u(u(t),v(t))+v(t)G_v(u(t),v(t))+\xi(t)G(u(t),v(t))}{u(t)G_u(u(t),v(t))} \ .
\ee 
Arguing as in Proposition 3.1, we see that there exists $v_0>0$ such that
\be \label{W5}
uG_u(u,v)+vG_v(u,v)+G(u,v)<0 \quad {\rm for \ } u>0,  \ v\ge v_0.
\ee
Hence there exists $T_0>0$ such that for any $t\ge T_0$ the function $v(t)/u(t)$ is increasing if $u(t)>v(t)$, whence there is a constant $C_0>1$ such that $u(t)\le C_0v(t)$ for $t\ge T_0$. 

It follows now from (\ref{I5}), (\ref{M5}) that there exists $T_1>0$ and a constant $C_1>0$ such that $o(t)$ in (\ref{M5}) satisfies the inequality $0\le o(t)\le C_1$ for $t\ge T_1$. 
Using the fact that $o(t)\ge 0$ we see from the argument to prove (\ref{N5}) that we can choose $T_1\ge T_0$ such that  $\tau(t)\le \tau_2$ for $t\ge T_1$. From (\ref{B1}) and the inequality $o(t)\le C_1$   we can further choose $T_2\ge T_1$ and $C_2>0$ such that for any $t\ge T_2$,  
\be \label{X5}
\frac{dy(t)}{dt} \ \le \ C_2g(y(t)) \ . 
\ee
The result follows from (\ref{B4}) and (\ref{X5}) since $\lim_{x\ra 1}\beta(x,0)=1$. 
\end{proof}
\begin{proof}[Proof of Theorem 1.2-critical case]
Using the notation of Lemma 5.1,  we shall show that there exists $\tau_0>0$ such that $\lim_{t\ra\infty} \tau(t)=\tau_0$. To obtain a formula for $\tau_0$ we assume $y(t)\sim z(t-\tau_0)$ and conclude from (\ref{AN4}) and (\ref{M5})  that for large $t$  
\be \label{Y5}
F(x,t) \ \sim \ z(t-\tau_0)+ \left[\frac{\al}{1-x}+\beta\right] g(z(t-\tau_0))\  , \quad 0\le x<1.
\ee
Now (\ref{B1}) and (\ref{C4}) imply that
\be \label{Z5}
e^{\tau_0-\beta}\int_0^1\exp\left[-\frac{\al}{1-x}\right]  \ dx \ = \ 1,
\ee
which uniquely determines $\tau_0>0$. 

We first prove that  $\liminf_{t\ra\infty} \tau(t)\le \tau_0$. To see this observe from (\ref{B1}), (\ref{C4}) and  (\ref{M5})  that if 
$\liminf_{t\ra\infty} \tau(t)\ge \tau_0+\ve$ for some $\ve>0$, then there exists $T_\ve$ sufficiently large and $\del(\ve)>0$ depending on $\ve$
with the property
\be \label{AA5}
\frac{d y(t)}{dt} \ \ge \ [1+\del(\ve)]g(y(t)) \ , \quad t\ge T_\ve .
\ee
Since $\lim_{x\ra 1}\beta(x,0)=1$ it follows from (\ref{AN4}) and (\ref{AA5})  that if $T_\ve$ is sufficiently large   depending only on $\ve$, then $y(t)\ge z([1+\del(\ve)/2](t-T_\ve)+T_\ve-\tau(T_\ve))$ for $t\ge T_\ve$.  Evidently this inequality implies that $\tau(t)\le 0$ for large $t$, which is a contradiction, whence  $\liminf_{t\ra\infty} \tau(t)\le \tau_0$.
We can further see that $\limsup_{t\ra\infty} \tau(t)\le \tau_0$ by observing that for any $\ve>0$ there exists $T_\ve$ with the property
\be \label{AB5}
\tau(t)\le\tau_0+\ve  \ {\rm for \ some \ }t\ge T_\ve \quad {\rm implies}  \ \tau(s)\le \tau_0+\ve \ {\rm for \ all \ } s\ge t. 
\ee
To see this note that if $\tau(s)=\tau_0+\ve$ then
\be \label{AC5}
\frac{d y(s)}{ds} \ > \ \frac{g(y(s))}{\beta(y(s),0)} \ ,
\ee
which implies $\tau(s')<\tau_0+\ve$ for $s'>s$ close to $s$. The inequality  (\ref{AB5}) follows from (\ref{B1}), (\ref{C4}) and  (\ref{M5})  on  choosing $T_\ve$ sufficiently large. Since we can see by a similar argument that  $\liminf_{t\ra\infty} \tau(t)\ge \tau_0$, we conclude that  $\lim_{t\ra\infty} \tau(t)= \tau_0$. It immediately follows from (\ref{B1}),  (\ref{C4}) and (\ref{M5})   that
\be \label{AD5}
\lim_{t\ra\infty} \tau(t)= \tau_0, \quad \lim_{t\ra\infty} \frac{1}{g(y(t))}\frac{dy(t)}{dt} \ = \ 1.
\ee
Hence we have from (\ref{C4}), (\ref{M5}) and (\ref{AD5}) that $\lim_{t\ra\infty}\langle X_t\rangle= e^{\al+\beta-\tau_0}<1$. 

To see that $\lim_{t\ra\infty} \kappa(t)=\kappa_0=\phi'(1)/\psi'(1)$,  we use the identity
\be \label{AE5}
\frac{d}{dt} \log u(t) \ = \ -\frac{G(u(t),v(t))+u(t)G_v(u(t),v(t))}{u(t)G_u(u(t),v(t))} \ ,
\ee
where $G(u,v)$ is the function (\ref{G3}). From (\ref{AD5}) we see that for any $\ve>0$ there exists $T_\ve>0$ such that if $t\ge T_\ve$ then 
\begin{eqnarray} \label{AF5}
- u(t)G_u(u(t),v(t)) \ &\ge& \ CG(u(t),v(t)), \\
 |G(u(t),v(t))+u(t)G_v(u(t),v(t))| \ &\le& \ \ve G(u(t),v(t)) \ , \nonumber
\end{eqnarray} 
where $C>0$ is independent of $\ve$.
The limit of the RHS of (\ref{AE5}) as $t\ra\infty$ is therefore  $0$, whence (\ref{C3}) implies $\lim_{t\ra\infty} \kappa(t)=\kappa_0$.

Finally we show that $\beta(\cdot,t)$ converges as $t\ra\infty$. The invariant solution $w_{\kappa_0}(\cdot)$ of (\ref{G1}) when $\kappa=\kappa_0$ is given by the formula
\be \label{AG5}
w_{\kappa_0}(x) \ = \ \exp\left[\tau_0-\beta-\frac{\al}{1-x}\right] \ , \quad  0\le x<1,
\ee
with $\tau_0,\al,\beta$ as in (\ref{Z5}).  Following the argument of Proposition 3.1 again, we define the function $g(x,t)$ by $w(x,t)=w_{\kappa_0}(x)g(x,t)$. From (\ref{M5}) and (\ref{AD5})  we see that  for any $\del$ with $0<\del<1$ there exists $T_\del>0$ such that
\be \label{AH5}
|(1-x)^2\frac{\pa}{\pa x} \log g(x,t)| \ \le \ \del \quad {\rm for \ } 0\le x\le 1-\del, \ t\ge T_\del \ .
\ee
Now (\ref{AH5}) implies that there is a constant $C$ independent of $\del$ such that
\be \label{AI5}
|c(x,t)-c_{\kappa_0}(x)g(x,t)| \ \le \ C \del  \ c_{\kappa_0}(x)g(x,t) \quad {\rm for \ } 0\le x\le 1-\del, \ t\ge T_\del \ .
\ee
We also have similarly to (\ref{AE3}) that for $0\le x\le 1-\del$ and $t\ge T_\del$, 
\begin{multline} \label{AJ5}
|h(x,t)-h_{\kappa_0}(x)g(x,t)| \ \le \ |h(1-\del,t)-h_{\kappa_0}(1-\del)g(1-\del,t)| \\
 +\int_x^{1-\del} h_{\kappa_0}(x')|\pa g(x',t)/\pa x'| \ dx' \ .
\end{multline}
From (\ref{AH5}) it follows that there is a constant $C$ independent of $\del$ such that
\be \label{AK5}
\int_x^{1-\del} h_{\kappa_0}(x')|\pa g(x',t)/\pa x'| \ dx' \  \le \ C\del h(x,t) \quad t\ge T_\del \ .
\ee
Consider any $\ve$ with $0<\ve<1$. It is clear that we may choose $\del<\ve$ and $T_\ve>0$ depending on $\ve$ such that 
\be \label{AL5}
w(1-\del,t) \ \ \le \ \ve w(1-\ve,t), \quad h(1-\del,t) \ \ \le \ \ve h(1-\ve,t) \  \quad  \ {\rm for \ } t\ge T_\ve . 
\ee
It follows from (\ref{AL5}) that there are constant $C,C'$ independent of $\ve$ such that
\begin{multline} \label{AU5}
 h_{\kappa_0}(1-\del)g(1-\del,t)\le C\del^2w_{\kappa_0}(1-\del)g(1-\del,t) \\
=C\del^2 w(1-\del,t) 
 \le \ C\ve\del^2 w(1-\ve,t)\le C\ve(1-x)^2 w(x,t)\\= C\ve (1-x)^2w_{\kappa_0}(x)g(x,t)
 \le C'\ve h_{\kappa_0}(x)g(x,t) \quad  \ {\rm for \ } 0\le x\le 1-\ve, \ t\ge T_\ve . 
\end{multline}
We conclude from (\ref{AI5})-(\ref{AU5}) that there is a constant $C$ independent of $\ve$ such that
\be \label{AM5}
|\beta(x,t)-\beta_{\kappa_0}(x)| \ \le \ C\ve \quad {\rm for \ } 0\le x\le 1-\ve, \ t\ge T_\ve \ .
\ee
\end{proof}
If we assume that (\ref{X1}) holds, then (\ref{AM5}) and the almost monotonicity of the function $\beta(\cdot,t)$ at large $t$ implies that 
\be \label{AN5}
\lim_{t\ra\infty}\|\beta(\cdot,t)-\beta_{\kappa_0}(\cdot)\|_\infty \ = \ 0 \ .
\ee
We give a direct proof of (\ref{AN5}) since it shows the key implication of the assumption (\ref{X1}) is that  it implies the function $g(\cdot)$ of (\ref{B4}) is monotonic decreasing. If $\log g(z)$ has large oscillations as $z\ra 1$ then (\ref{AN5}) may not hold.
\begin{proposition}
Suppose $\beta(\cdot,0)$ satisfies  (\ref{U1}) with $\beta_0=1$ and also (\ref{X1}).  Then (\ref{AN5}) holds.
\end{proposition}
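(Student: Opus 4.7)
The plan is to combine (AM5), which gives uniform convergence of $\beta(\cdot,t)$ to $\beta_{\kappa_0}(\cdot)$ on $[0,1-\varepsilon']$ for every $\varepsilon'>0$, with a uniform lower bound on $\beta(z,t)$ for $z\in[1-\varepsilon',1)$ that is furnished by the monotonicity of the function $g$ of (B4) afforded by (X1). Since $g'(x)=-(1-\beta(x,0))\le 0$ on $[1-\delta,1)$ under (X1), the bound $\beta(x,t)\le\beta(F(x,t),0)$ (from convexity of $F(\cdot,t)$, as used in Lemma~2.2) together with $F(x,t)\ge F(0,t)\to 1$ (Lemma~2.1) gives $\sup_{[0,1)}\beta(\cdot,t)\le 1$ for all $t\ge T_0$ sufficiently large. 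As observed in the proof of Lemma~2.4, once this holds the function $g(\cdot,s)$ of (K2) is non-negative and monotonically decreasing on $[0,1)$ for every $s\ge T_0$.

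Given $\varepsilon>0$, pick $\varepsilon'>0$ so small that $\beta_{\kappa_0}(z)\ge 1-\varepsilon/4$ on $[1-\varepsilon',1)$; this is possible because the explicit form (AG5) yields the asymptotic $\beta_{\kappa_0}(z)=1-2(1-z)/\alpha+O((1-z)^2)$. Set $x_0=1-\varepsilon'$, and let $\tilde x_0(\cdot),\tilde z(\cdot)$ denote the back-characteristics reaching $(x_0,t),(z,t)$. Applying the characteristic identity (U2) at $x_0$ and using (AM5),
\[
\int_0^t g(\tilde x_0(s),s)\,ds=-\log\beta(x_0,t)+\log\beta(F(x_0,t),0)\;\longrightarrow\;-\log\beta_{\kappa_0}(x_0)=O(\varepsilon').
\]
For $z\in[1-\varepsilon',1)$, characteristics do not cross, so $\tilde z(s)\ge\tilde x_0(s)$ for $s\le t$, and monotonicity of $g(\cdot,s)$ for $s\ge T_0$ yields $\int_{T_0}^t g(\tilde z,s)\,ds\le\int_{T_0}^t g(\tilde x_0,s)\,ds$.

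On the short interval $[0,T_0]$, linearizing the characteristic ODE near $x=1$ gives $d\log(1-\tilde z(s))/ds=[\kappa(s)-\kappa_0]|\psi'(1)|+O(1-\tilde z(s))$, and the upper boundedness of $\kappa(\cdot)$ (from Theorem~1.2, since $\kappa(t)\to\kappa_0$ in the critical case) produces the Gronwall bound $(1-\tilde z(s))\le C_{T_0}(1-F(z,t))$ on $[0,T_0]$. Since $F(z,t)\ge F(0,t)\to 1$, $(1-\tilde z(s))\to 0$ uniformly in $s\in[0,T_0]$ and $z\in[1-\varepsilon',1)$. Combined with the Taylor estimate $g(y,s)\le C(1-y)$ (immediate from (K2) with $\kappa(\cdot)$ bounded), this gives $\int_0^{T_0}g(\tilde z,s)\,ds=o_t(1)$ uniformly, and similarly for $\tilde x_0$. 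Plugging back into (U2) and using $\beta(F(z,t),0)\to 1$ uniformly in $z$, I obtain $\beta(z,t)\ge 1-O(\varepsilon')-o_t(1)\ge 1-\varepsilon$ on $[1-\varepsilon',1)$ for $t$ large. Combined with $\beta(z,t)\le 1$ and $\beta_{\kappa_0}(z)\in[1-\varepsilon/4,1]$, this yields $|\beta(z,t)-\beta_{\kappa_0}(z)|\le\varepsilon$ on $[1-\varepsilon',1)$; together with (AM5) on $[0,1-\varepsilon']$ this proves (AN5).

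The main obstacle is the uniform Gronwall bound $(1-\tilde z(s))\le C_{T_0}(1-F(z,t))$ on the fixed compact interval $[0,T_0]$, where all the structural ingredients come together: (X1) guarantees $\sup\beta\le 1$ eventually and hence monotonicity of $g(\cdot,s)$; Lemma~2.1 drives $F(0,t)$ to $1$; and the boundedness of $\kappa(\cdot)$ (available from the already-proved critical case of Theorem~1.2) controls the back-characteristics. The rest is a routine splitting of the integral at $s=T_0$ together with the characteristic formula (U2).
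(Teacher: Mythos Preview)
Your argument is correct and is in fact precisely the route the paper alludes to in the sentence immediately preceding the proposition: ``(\ref{AM5}) and the almost monotonicity of the function $\beta(\cdot,t)$ at large $t$ implies (\ref{AN5}).'' You have spelled out that remark in detail. The mechanism is: (\ref{X1}) plus $\beta(x,t)\le\beta(F(x,t),0)$ forces $\sup\beta(\cdot,t)\le 1$ for $t\ge T_0$, whence by Lemma~2.4 the function $g(\cdot,s)$ of (\ref{K2}) is monotone decreasing; then the characteristic formula (\ref{U2}) transfers the known bound $\int_0^t g(\tilde x_0(s),s)\,ds\to-\log\beta_{\kappa_0}(x_0)=O(\varepsilon')$ at the fixed point $x_0=1-\varepsilon'$ to all $z\in[x_0,1)$, modulo a tail on $[0,T_0]$ that vanishes because the back-characteristics are uniformly pinned near $x=1$ there. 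One small point worth making explicit: your estimate $g(y,s)\le C(1-y)$ on $[0,T_0]$ does not require $\sup\beta(\cdot,s)\le 1$; in the quadratic case it follows directly from (\ref{K2}) via $g(x,s)\le[\phi'(x)-\kappa(s)\psi'(x)]-[\phi'(1)-\kappa(s)\psi'(1)]=(\kappa(s)\psi''(1)-\phi''(1))(1-x)$, since $\phi',\psi'$ are affine.

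The paper's own proof takes a different, more hands-on route: it works with the explicit identity (\ref{AO5}) for $\beta(x,t)$ and the closed formula (\ref{M5}) for $F(x,t)$ in the quadratic model, reducing matters to the decay estimate (\ref{AQ5}) for $h_0(F(x+\delta(1-x),t))/h_0(F(x,t))$. The key structural input from (\ref{X1}) there is that the function $g(\cdot)$ of (\ref{B4}) is decreasing, which allows one to replace $h_0$ by $w_0$ in (\ref{AQ5}). Your argument is more conceptual and would transfer verbatim to non-quadratic $\phi,\psi$ once the analogues of (\ref{AM5}) and the bound $\sup\kappa(\cdot)<\infty$ are available; the paper's proof, by contrast, leans on the explicit quadratic formulas but isolates more transparently \emph{which} consequence of (\ref{X1}) (monotonicity of $g$ from (\ref{B4}), as opposed to monotonicity of $g(\cdot,s)$ from (\ref{K2})) is the essential one.
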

\begin{proof}
We use the identity
\be \label{AO5}
\beta(x,t) \ = \ \beta(F(x,t),0) \int_{F(x,t)}^1 \frac{\pa F(x,t)/\pa x}{\pa F(x',t)/\pa x'} \ w_0(z) \ dz \Big/ \int_{F(x,t)}^1 w_0(z) \ dz \ ,
\ee
where $z=F(x',t), \ x\le x'<1$. Observe that for any $\del$ with $0<\del<1$ there is the inequality
\be \label{AP5}
\frac{\pa F(x,t)/\pa x}{\pa F(x',t)/\pa x'} \ \ge  \ (1-\del)^2 \ , \quad x\le x'\le x+\del(1-x) \ .
\ee
Hence it will be sufficient for us to show that there exists $\del_0,  \ve_0$ with $0<\del_0,\ve_0<1$ such that if $0<\del\le \del_0,  \ 0<\ve<\ve_0,$  then
\be \label{AQ5}
\limsup_{t\ra\infty}\sup_{1-x\le \ve}\frac{ h_0(F(x+\del(1-x),t))}{ h_0(F(x,t))} \ \le  \exp[-\al\del/2\ve] \ .
\ee
To prove (\ref{AQ5}) we use the identity $h_0(z) \ = \  g(z)w_0(z), \ 0\le z<1,$ where $g(\cdot)$ is the function (\ref{B4}). Since $g(\cdot)$ is decreasing, (\ref{AQ5}) follows from the same inequality with $h_0(\cdot)$ replaced by $w_0(\cdot)$.  We also have from (\ref{D4}) that
\be \label{AR5}
\frac{w_0(x+zg(x))}{w_0(x)}  \ \le \exp\left[ -z\inf_{x\le x'<1}{\beta(x',0)}\right] \quad {\rm   for \ } 0\le x\le x+zg(x)<1.
\ee

Observe now that
\be \label{AS5}
F(x+\del(1-x),t)-F(x,t) \ \ge \   \frac{\del u(t)}{u(t)+a(t)(1-x)} [1-F(x,t)] \ , \quad 0<x<1.
\ee
It follows then from (\ref{B4}), (\ref{AR5}), (\ref{AS5})  that for any $M>0$, 
\be \label{AT5}
\lim_{t\ra\infty} \sup_{1-x\le Mu(t)/v(t)} \frac{ w_0(F(x+\del(1-x),t))}{w_0(F(x,t))} \ = \ 0.
\ee
Since $\lim_{t\ra\infty} v(t)=\infty$, we also see that there exists constants  $T_0,M_0,C_1,C_2>0$  such that if  $t\ge T_0$ and $1-x\ge Mu(t)/v(t)$ for some $M\ge M_0$, then 
\be \label{AV5}
\left(\frac{\al}{1-x}+\beta\right)\left[1- \frac{C_1}{M}\right] \frac{dy(t)}{dt}\le F(x,t)-y(t)\le\left(\frac{\al}{1-x}+\beta\right)\left[1+ \frac{C_2}{M}\right] \frac{dy(t)}{dt} \ .
\ee
We conclude from (\ref{AD5}), (\ref{AR5}), (\ref{AV5}) that there exists $\del_0,  \ve_0$ with $0<\del_0,\ve_0<1$ such that if $0<\del\le \del_0,  \ 0<\ve<\ve_0,$ and $M\ge 1/\del^2$, then  
\be \label{AW5}
\limsup_{t\ra\infty} \sup_{ Mu(t)/v(t)\le 1-x\le \ve} \frac{ w_0(F(x+\del(1-x),t))}{w_0(F(x,t))} \ \le \ \exp[-\al\del/2\ve] \ .
\ee
The inequality (\ref{AQ5}) follows from (\ref{AT5}), (\ref{AW5}). 
\end{proof}

\vspace{.1in}

\section{Completion of the Proof of Theorem 1.3}
We  wish to formulate (\ref{A1}), (\ref{B1}) for general functions $\phi(\cdot),  \ \psi(\cdot)$ satisfying  (\ref{D1}), (\ref{E1})  in such a way that it can be approximated by the quadratic model studied in $\S3$ and $\S5$. In order to do this recall that the function $F(x,t)$ defined by (\ref{A2}) is the solution to the initial value problem (\ref{A3}), where the linear first order PDE contains a free parameter $\kappa(t), \ t\ge 0$. The conservation law (\ref{B1}) determines the function $\kappa(\cdot)$ uniquely, and in particular one sees that it  is strictly positive.  In (\ref{C3}) we defined a new parameter $u(t),  \ t\ge 0,$ in terms of $\kappa(\cdot)$, and it turned out that the dynamics of the quadratic model had the simple form (\ref{E3}) in terms of the function $u(\cdot)$.   We therefore formulate the general case in such a way that the free parameter is the function $u(\cdot)$ of (\ref{C3}) rather than the function $\kappa(\cdot)$ which enters in (\ref{A3}). 

To carry this out we write the characteristic equation (\ref{A2}) in terms of $u(\cdot)$.  Thus (\ref{A2}) is equivalent to
\be \label{A6}
\frac{dx(s)}{ds} \ = \ \phi(x(s))+\frac{\psi(x(s))}{\psi'(1)}\left[\frac{1}{u(s)}\frac{du(s)}{ds}-\phi'(1)\right] \ ,
\ee
whence we obtain the equation
\be \label{B6}
u(s)\frac{dx(s)}{ds} -\frac{\psi(x(s))}{\psi'(1)}\frac{du(s)}{ds} \ = \ u(s)\left[\frac{\psi'(1)\phi(x(s))-\psi(x(s))\phi'(1)}{\psi'(1)}\right] \  .
\ee
Next let $f(x), \ 0\le x< 1$, be the function defined by
\be \label{C6}
\frac{d}{dx}\log f(x) \ = \ -\frac{\psi'(1)}{\psi(x)} \ ,  \ 0\le x<1, \qquad \lim_{x\ra 1} (1-x)f(x) \ = \ 1.
\ee
 If the function $\psi(\cdot)$ is quadratic, it is easy to see from (\ref{C6}) that $f(\cdot)$ is given by the formula 
\be \label{D6}
f(x) \ = \  \frac{1}{1-x}-\frac{\psi''(1)}{2\psi'(1)} \ .
\ee
More generally $f:[0,1)\ra\R$ is a strictly increasing function satisfying $f(0)>0$ and  $\lim_{x\ra 1} f(x)=\infty$. Multiplying (\ref{B6}) by $f'(x(s))$, we conclude from (\ref{C6})  that
\be \label{E6}
\frac{d}{ds}\left[f(x(s))u(s)\right] \ = \ u(s)f'(x(s))\left[\frac{\psi'(1)\phi(x(s))-\psi(x(s))\phi'(1)}{\psi'(1)}\right] \  .
\ee

We define now the domains $\mathcal{D}=\{ (x,u)\in\R^2: 0<x<1, \ u>0\}$ and  $\hat{\mathcal{D}}=\{ (z,u)\in\R^2: z>f(0)u, \ u>0\}$. Then the transformation $(z,u)=(f(x)u, u)$ maps $\mathcal{D}$ to $\hat{\mathcal{D}}$.  Furthermore from (\ref{D6}) trajectories $x(s), \ s\le t,$ of (\ref{A2}) with $u(\cdot)$ defined in terms of the function $\kappa(\cdot)$ by (\ref{C3})  have the property that $(x(s),u(s))\in\mathcal{D}$ map under the transformation to $(z(s),u(s))\in\hat{\mathcal{D}}$, where $z(s)$ is a solution to
\be \label{F6}
\frac{dz(s)}{ds} \ = \ g(z(s),u(s)) \ , \quad s\le t, \ z(t)=z.
\ee
and $g(z,u)$ is the function
\be \label{G6}
g(z,u) \ = \  uf'(x)\left[\frac{\psi'(1)\phi(x)-\psi(x)\phi'(1)}{\psi'(1)}\right] \  .
\ee
\begin{lem}
Assume $\phi(\cdot), \ \psi(\cdot)$ satisfy (\ref{D1}), (\ref{E1}). Then there are positive constants $C_1,C_2$ such that  $-C_2u\le g(z,u)\le -C_1 u$  for $(z,u)\in\hat{\mathcal{D}}$ and
\be \label{H6}
\lim_{z\ra\infty} g(z,u) \ = \  \frac{u[\psi'(1)\phi''(1)-\psi''(1)\phi'(1)]}{2\psi'(1)} \ = \ -\al_0 u \ ,
\ee
where $\al_0>0$.  The function $z\ra g(z,u)$ is  $C^2$ in the interval $z>f(0)u$ and $\pa g(z,u)/\pa z$ is given by the formula
\be \label{AA6}
\frac{\pa g(z,u)}{\pa z} \ = \  \Ga(x) \ = \ \phi'(x)+\phi'(1)-\phi(x)[\psi'(x)+\psi'(1)]/\psi(x) , \quad z=f(x)u,
\ee
where $\Ga(\cdot)$ is $C^1$ on the interval $(0,1]$ and satisfies $\Ga(1)=\Ga'(1)=0$. 

If in addition $\phi(\cdot), \ \psi(\cdot)$ satisfy (\ref{O1}) then $\Ga(\cdot)$ is $C^2$ on $(0,1]$ and  $g(z,u)$ is an increasing function of $z>f(0)u$. The function $z\ra g(z,u)$ is concave for $z>f(0)u$ 
provided $\phi(\cdot), \ \psi(\cdot)$ satisfy (\ref{Y1}). The condition (\ref{Y1}) holds if  $\phi(\cdot), \ \psi(\cdot)$ satisfy (\ref{D1}), (\ref{E1}), (\ref{O1})  and $\psi(\cdot)$ is quadratic. 
 \end{lem}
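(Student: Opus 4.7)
The plan is first to recast $g(z,u)$ in a more transparent form. With $\kappa_0 = \phi'(1)/\psi'(1)$, the definition (\ref{G6}) simplifies to $g(z,u) = uf'(x)[\phi(x)-\kappa_0\psi(x)]$ where $z=f(x)u$. The function $x\mapsto \phi(x)-\kappa_0\psi(x)$ is concave (since $\phi$ is concave, $\psi$ is convex and $\kappa_0>0$) and by the choice of $\kappa_0$ it satisfies $[\phi-\kappa_0\psi](1)=0$ and $[\phi-\kappa_0\psi]'(1)=0$; hence it is non-positive on $[0,1]$ and strictly negative on $[0,1)$. Since $f'(x)=-\psi'(1)f(x)/\psi(x)>0$ by (\ref{C6}), the product $\Psi(x):=f'(x)[\phi(x)-\kappa_0\psi(x)]$ is strictly negative on $[0,1)$, and Taylor expansions $\phi-\kappa_0\psi\sim \tfrac12[\phi''(1)-\kappa_0\psi''(1)](1-x)^2$ and $f'(x)\sim 1/(1-x)^2$ give $\Psi(x)\to [\phi''(1)-\kappa_0\psi''(1)]/2=-\al_0$ as $x\to 1$, proving (\ref{H6}). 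Since $\Psi$ extends continuously to $[0,1]$ with $\Psi(0)=-\kappa_0 f'(0)\psi(0)<0$, compactness yields the bounds $-C_2u\le g(z,u)\le -C_1 u$ on $\hat{\mathcal D}$.

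To obtain (\ref{AA6}), use $\pa g/\pa z=(\pa g/\pa x)/(uf'(x))$ and the identity $f''/f'=-[\psi'(x)+\psi'(1)]/\psi(x)$, which follows by logarithmically differentiating (\ref{C6}); a direct simplification gives the stated formula for $\Ga(x)$. The boundary values $\Ga(1)=\Ga'(1)=0$ are checked by writing $\Ga=R'\psi+\phi'(1)-R\psi'(1)$ with $R=\phi/\psi$ and using $\lim_{x\to 1}R(x)=\phi'(1)/\psi'(1)=\kappa_0$ together with $\Ga'=R''\psi+R'[\psi'(x)-\psi'(1)]$, whose two summands both vanish at $x=1$. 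Regularity of $\Ga$ follows by composition: under (\ref{D1}), (\ref{E1}), $\log f$ is $C^3$ by (\ref{C6}), hence $\Ga\in C^1((0,1])$; under the additional (\ref{O1}), $\psi\in C^3$ upgrades $f$ to $C^4$ and $\Ga$ to $C^2$.

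The monotonicity claim under (\ref{O1}) — equivalently $\Ga(x)\ge 0$ on $(0,1]$ — is the main technical obstacle. The plan is to work with $\Theta(x):=\Ga(x)\psi(x)$, which can be rewritten as $\Theta=S-|\psi'(1)|\pi$ where $S(x)=\phi'\psi-\phi\psi'$ and $\pi(x)=\kappa_0\psi(x)-\phi(x)$. Under (\ref{D1}), (\ref{E1}) both $S$ and $\pi$ are non-negative, decreasing on $[0,1]$, and vanish at $x=1$ together with their first derivatives, with the leading Taylor coefficients matching precisely so that $S/\pi\to|\psi'(1)|$ as $x\to 1$. The goal is therefore to show $S/\pi\ge|\psi'(1)|$ on $(0,1)$, which one expects to prove by exploiting (\ref{O1}) — namely, $\phi''$ non-decreasing and $\psi''$ non-increasing — through an integral identity such as $\Theta(x)=\int_x^1(y-x)\Theta''(y)\,dy$ combined with the decomposition
\[
\Theta''=\phi'''\psi-\phi\psi'''+\phi''[\psi'-\psi'(1)]+\psi''[\phi'(1)-\phi'],
\]
in which the first three terms are pointwise non-negative but the fourth is negative; the fourth term must be absorbed by integration by parts against the weight $(y-x)$, using $\phi'''\ge 0$ and the fact (verified via Chebyshev's inequality applied to the monotone $\phi''$) that $\phi'(0)\ge |\phi'(1)|$ under (\ref{O1}) and (\ref{D1}). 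This balance is where (\ref{O1}) is used in an essential way.

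For concavity, $\pa^2 g/\pa z^2=\Ga'(x)/[uf'(x)]$, so concavity of $g$ in $z$ is equivalent to $\Ga'\le 0$ on $[0,1)$, which is exactly (\ref{Y1}). Finally, when $\psi$ is quadratic, one has $\psi(x)=(x-1)[\psi'(1)+\tfrac12\psi''(1)(x-1)]$ and $\psi'(x)+\psi'(1)=2[\psi'(1)+\tfrac12\psi''(1)(x-1)]$, so $[\psi'(x)+\psi'(1)]/\psi(x)=2/(x-1)$, and (\ref{AA6}) collapses to $\Ga(x)=\phi'(x)+\phi'(1)+2\phi(x)/(1-x)$. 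A short computation then yields
\[
\tfrac12(1-x)^2\Ga'(x)=P(x):=\phi(x)+(1-x)\phi'(x)+\tfrac12(1-x)^2\phi''(x),
\]
and the identity $P'(x)=\tfrac12(1-x)^2\phi'''(x)$ (the middle terms telescope by direct differentiation) together with $\phi'''\ge 0$ from (\ref{O1}) and $P(1)=0$ forces $P\le 0$ on $[0,1]$, whence $\Ga'\le 0$ on $[0,1)$, giving (\ref{Y1}).
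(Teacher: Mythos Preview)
Your treatment of the bounds on $g$, the limit (\ref{H6}), the derivation of formula (\ref{AA6}), the concavity statement under (\ref{Y1}), and the quadratic-$\psi$ case are all correct and essentially coincide with the paper's arguments.

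The genuine gap is in the monotonicity step, where you need $\Ga(x)\ge 0$ on $(0,1)$ under (\ref{O1}). Your proposed route via $\Theta=\Ga\psi$, the decomposition of $\Theta''$, and ``absorbing'' the negative term $\psi''[\phi'(1)-\phi']$ by integration by parts against the weight $(y-x)$ is left as a sketch (``one expects to prove'', ``must be absorbed''), and it is not clear how to complete it: that negative term is not obviously dominated by the others after a single integration by parts, and the auxiliary observation $\phi'(0)\ge|\phi'(1)|$ does not enter in any evident way.

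The paper's argument for this step is both simpler and complete. Since $\Ga(x)\psi(x)=\psi(x)[\phi'(x)+\phi'(1)]-\phi(x)[\psi'(x)+\psi'(1)]$ and $\phi,\psi>0$ on $(0,1)$, it suffices to establish the pair of inequalities
\[
\phi'(x)+\phi'(1)\ \ge\ -\frac{2\phi(x)}{1-x}\,,\qquad \psi'(x)+\psi'(1)\ \le\ -\frac{2\psi(x)}{1-x}\,,\qquad 0\le x<1.
\]
For the first, set $k(x)=(1-x)[\phi'(x)+\phi'(1)]+2\phi(x)$; then $k(1)=k'(1)=0$ and a direct computation gives $k''(x)=(1-x)\phi'''(x)\ge 0$ by (\ref{O1}), so $k$ is convex with a double zero at $x=1$, hence $k\ge 0$ on $[0,1]$. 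The second inequality follows by the same computation applied to $\psi$, with the sign reversed since $\psi'''\le 0$. Note that this is precisely the telescoping mechanism you used successfully in the quadratic-$\psi$ case (your function $P$); the point is that it already works in full generality once applied separately to $\phi$ and to $\psi$, rather than to the combination $\Theta$.
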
 
\begin{proof}
From (\ref{D1}), (\ref{E1}) we see that the function $h(x)= \psi'(1)\phi(x)-\psi(x)\phi'(1)$ is convex and satisfies $h(0)>0,  \ h(1)=0, \ h'(1)=0,$ whence $h(\cdot)$ is decreasing and strictly positive for $0\le x<1$. It follows that if $0<\del\le 1$ there are positive constants $C_{1,\del}, \ C_{2,\del}$ such that  $-C_{2,\del}u\le g(z,u)<-C_{1,\del}u$ for $ f(0)u\le z\le f(1-\del)u$.  Observe further from  (\ref{G6}) that we  may write the function $g(z,u)$ as
\be \label{I6}
g(z,u) \ = \ \frac{uf'(x)(1-x)^2}{2\psi'(1)} \int_0^1[\psi'(1)\phi''(\la x+1-\la )-\psi''(\la x+1-\la)\phi'(1)] \ d\rho(\la) \ ,
\ee
where $\rho(\cdot)$ is a probability measure on the interval $[0,1]$. Since $\psi''(1)-\phi''(1)>0$  and $\lim_{x\ra 1} f'(x)(1-x)^2=1,$   it follows from (\ref{I6}) that we may choose $C_{1,\del}, \ C_{2,\del}$ independent of $\del$ as $\del\ra 0$.   Evidently (\ref{I6}) implies (\ref{H6})  on using the fact that $\lim_{x\ra 1} f'(x)(1-x)^2=1$. 

To see that $g(z,u)$ is an increasing function of $z> f(0)u$,  we show that $\pa g(z,u)/\pa x\ge 0$ for $0\le x<1$.  From (\ref{C6}), (\ref{G6}) we have that
\be \label{J6}
\frac{\pa g(z,u)}{\pa x} \ = \ u\left\{\psi(x)[\phi'(x)+\phi'(1)]-\phi(x)[\psi'(x)+\psi'(1)]\right\}|\psi'(1)|f(x)/\psi(x)^2 \ .
\ee
Consider now the function $k(x)=(1-x)[\phi'(x)+\phi'(1)]+2\phi(x)$, which has the property that $k(1)=k'(1)=0$ and $k''(x)=(1-x)\phi'''(x)$.   Assuming $\phi''(\cdot)$ is increasing, it follows that $k(\cdot)$ is convex and hence non-negative for $0\le x<1$.  Since we can make a similar argument for $\psi(\cdot)$ under the assumption that $ \psi''(\cdot)$ is decreasing, we obtain the inequalities
\be \label{K6}
\phi'(x)+\phi'(1) \ \ge \ -2\phi(x)/(1-x), \quad \psi'(x)+\psi'(1) \ \le \ -2\psi(x)/(1-x), \ \ 0\le x<1.
\ee
Now (\ref{J6}), (\ref{K6}) imply that  $\pa g(z,u)/\pa z\ge 0$  for $z\ge f(0)u$.  The formula (\ref{AA6}) follows from (\ref{C6}) and (\ref{J6}).  Hence the function $z\ra g(z,u)$ is concave if $\Ga(x)$ is a decreasing function of $x$. 

If $\psi(\cdot)$ is quadratic then (\ref{AA6}) implies that
\be \label{AB6}
\frac{\pa g(z,u)}{\pa z} \ = \ \phi'(x)+\phi'(1)+2\phi(x)/(1-x) \ ,
\ee
and so
\be \label{AC6}
\frac{\pa}{\pa x} \frac{\pa g(z,u)}{\pa z} \ = \ \phi''(x)+2\phi'(x)/(1-x)+2\phi(x)/(1-x)^2  \ .
\ee
Now just as before the condition $\phi'''(\cdot)\ge 0$ implies that the RHS of (\ref{AC6}) is not positive for $0< x<1$.
\end{proof}
\begin{rem}
Observe that we have in the case of quadratic $\phi(\cdot)$, for example $\phi(x)=x(1-x)$, the identity
\be \label{AH6}
\Ga'(0) \ = \ \phi''(0){\color{red}-}\phi'(0)[\psi'(0)+\psi'(1)]/\psi(0) \ = \  -2{\color{red}-}[\psi'(0)+\psi'(1)]/\psi(0)  \ .
\ee
We have already seen in (\ref{K6})  that if (\ref{O1}) holds then the RHS of (\ref{AH6}) is positive if $\psi(\cdot)$ is not quadratic. Hence the function $z\ra g(z,u)$ is concave when $\phi(\cdot)$ is quadratic only if $\psi(\cdot)$ is also quadratic. 
\end{rem}

Observe that the condition $\kappa(\cdot)$ a positive function, which ensures that trajectories $(x(s),u(s)),  \ s\le t, $ of (\ref{A2}) with $(x(t),u(t))\in\mathcal{D}$ remain in $\mathcal{D}$, becomes the condition 
\be \label{L6}
\frac{d}{dt} \log u(t) \ > \ \phi'(1) \ , \quad t\ge 0.
\ee
Hence if $u(\cdot)$ satisfies (\ref{L6}) then solutions $(z(s),u(s)), \ s\le t,$ of (\ref{F6}) with $(z(t),u(t))\in\hat{\mathcal{D}}$ remain in $\hat{\mathcal{D}}$. To see this directly  first observe from (\ref{C6}), (\ref{G6})  that $g(f(0)u,u)=\phi'(1)f(0)u<0$.  For the trajectory $(z(s),u(s)), \ s\le t,$ to remain in $\hat{\mathcal{D}}$ we must have
\be \label{M6}
\frac{dz}{du} \ > \ f(0) \quad {\rm if \ } (z(s),u(s))\in\pa\hat{\mathcal{D}} \quad {\rm and \ } \frac{du(s)}{ds}<0,
\ee
since $dz(s)/ds<0$. 
Now (\ref{G6}) implies in this case that
\be \label{N6}
\frac{dz}{du} \ = \ g(z(s),u(s))\Big/  \frac{du(s)}{ds} \ = \ \phi'(1)f(0)u(s)\Big/  \frac{du(s)}{ds} \ > \ f(0) \ .
\ee

The first order PDE with characteristic equation (\ref{F6})  is given by
\begin{eqnarray} \label{O6}
\frac{\pa \hat{F}(z,t)}{\pa t}+ g(z,u(t))\frac{\pa \hat{F}(z,t)}{\pa z}&=&0, \quad z>f(0)u(t), \ t\ge 0,
\\
\hat{F}(z,0) &=& z, \quad z>f(0). \nonumber
\end{eqnarray}
Comparing now (\ref{A3}) to (\ref{O6}) and using (\ref{E6}), we conclude that the solutions $F(x,t)$ of (\ref{A3}) and $\hat{F}(z,t)$ of (\ref{O6}) are related by the identity
\be \label{P6}
f(F(x,t)) \ = \ \hat{F}( f(x)u(t),t) \ , \quad 0<x<1,  \  t>0. 
\ee
In the case when $\psi(\cdot), \ \phi(\cdot)$ are quadratic functions, the solution to (\ref{O6}) is given by the formula
\be \label{Q6}
\hat{F}(z,t) \ = \ z+\al_0v(t) \ ,
\ee
where $v(t), \ t\ge 0,$ is the solution to  (\ref{D3}). We easily conclude from (\ref{D6}), (\ref{O6}), (\ref{P6}) that in the quadratic case  $F(x,t)$ is given by the formula (\ref{E3}). More generally we have as a consequence of Lemma 6.1  the following:
\begin{corollary}
Assume $\phi(\cdot), \ \psi(\cdot)$ satisfy (\ref{D1}), (\ref{E1}). Then for $t\ge 0$ the function $z\ra\hat{F}(z,t)$ with domain $\{z\ge f(0)u(t)\}$ is increasing, and there are positive constants $C_1,C_2$ such that  $z+C_1v(t)\le \hat{F}(z,t)\le z+ C_2v(t)$. If in addition (\ref{O1}) holds  then  $\pa\hat{F}(z,t)/\pa z\le 1$. If the function $z\ra g(z,u)$ is concave for all $u>0$ then $\hat{F}(z,t)$ is a convex function of $z>f(0)u(t)$. 
\end{corollary}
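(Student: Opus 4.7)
The proof rests entirely on the characteristic representation $\hat{F}(z,t) = z(0)$, where $z(\cdot)$ solves the terminal-value problem (\ref{F6}) with $z(t) = z$. Integrating the ODE from $0$ to $t$ gives
$$\hat{F}(z,t) - z \ = \ -\int_0^t g(z(s), u(s))\, ds,$$
and the two-sided bound $-C_2 u(s) \le g(z(s), u(s)) \le -C_1 u(s)$ from Lemma 6.1, combined with $\int_0^t u(s)\, ds = v(t)$ from (\ref{D3}), immediately yields the estimate $z + C_1 v(t) \le \hat{F}(z,t) \le z + C_2 v(t)$.

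For monotonicity and the derivative estimate I would differentiate the characteristic ODE with respect to $z$. Setting $\alpha(s) = \partial z(s)/\partial z$, the standard variational equation $\alpha'(s) = \partial_z g(z(s), u(s))\,\alpha(s)$ with $\alpha(t) = 1$ gives
$$\frac{\partial \hat{F}(z,t)}{\partial z} \ = \ \alpha(0) \ = \ \exp\left[-\int_0^t \partial_z g(z(s), u(s))\, ds\right].$$
This expression is strictly positive, hence $\hat{F}(\cdot, t)$ is increasing. When (\ref{O1}) holds, the second part of Lemma 6.1 tells us $\partial_z g = \Gamma(x) \ge 0$ on $\hat{\mathcal{D}}$, so the exponent is nonpositive and we obtain $\partial\hat{F}/\partial z \le 1$.

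For the convexity assertion I would take a second derivative in $z$. Letting $\beta(s) = \partial^2 z(s)/\partial z^2$, the second variational equation is $\beta'(s) = g_{zz}\,\alpha(s)^2 + g_z\,\beta(s)$ with $\beta(t) = 0$. The substitution $\gamma(s) = \beta(s)/\alpha(s)^2$ converts this to the simpler linear equation $\gamma'(s) = g_{zz}(z(s), u(s)) - g_z(z(s), u(s))\,\gamma(s)$ with $\gamma(t) = 0$, whose solution is a positive weighted integral of $-g_{zz}$ over $[s,t]$. If $z \mapsto g(z,u)$ is concave then $g_{zz} \le 0$ on $\hat{\mathcal{D}}$, so $\gamma(s) \ge 0$ for all $s \in [0,t]$; in particular $\partial^2\hat{F}(z,t)/\partial z^2 = \beta(0) = \alpha(0)^2\gamma(0) \ge 0$.

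None of the three assertions looks like a serious obstacle once Lemma 6.1 is in hand; the argument is a sequence of first- and second-order variational computations along the characteristic. The only technical point that needs to be checked is that the backward characteristic $\{(z(s), u(s)) : 0 \le s \le t\}$ stays inside the open domain $\hat{\mathcal{D}}$, so that $g$, $g_z$, and $g_{zz}$ are well defined along it. This is precisely the invariance established in (\ref{M6})--(\ref{N6}) preceding the statement, via the positivity condition (\ref{L6}) on $u(\cdot)$ that comes from $\kappa(\cdot) > 0$.
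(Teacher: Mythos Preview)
Your proof is correct and follows essentially the same route as the paper: the characteristic representation $\hat F(z,t)=z(0)$, integration of the ODE together with Lemma 6.1 for the two-sided bound, the exponential formula (\ref{S6}) for $\partial\hat F/\partial z$ to get monotonicity and the bound $\le 1$, and concavity of $g(\cdot,u)$ for convexity of $\hat F(\cdot,t)$. The only minor difference is that the paper handles convexity in one line by differentiating (\ref{S6}) directly, whereas you spell out the second variational equation for $\beta(s)=\partial^2 z(s)/\partial z^2$; the content is the same.
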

\begin{proof}
We have that $\hat{F}(z,t)=z(0)$, where $z(s),  \ s\le t,$ is the solution to (\ref{F6}) with $z(t)=z$, whence it follows that the function  $z\ra\hat{F}(z,t)$ is increasing. From Lemma 6.1 it follows that
\be \label{R6}
z(s) \ \le z+C_2\int^t_s u(s') \ ds' \ , \quad 0\le s\le t \ ,
\ee
and so $\hat{F}(z,t)\le z+ C_2v(t)$.  We conclude that  $\pa\hat{F}(z,t)/\pa z\le 1$ from the formula
\be \label{S6}
\frac{\pa\hat{F}(z,t)}{\pa z}\ = \  \exp\left[-\int_0^t \frac{\pa g(z(s),u(s))}{\pa z} \ ds \ \right] \ 
\ee
and Lemma 6.1. Evidently (\ref{S6}) implies the convexity of the function $z\ra\hat{F}(z,t)$ is a consequence of the concavity of the function $z\ra g(z,u)$.  
\end{proof}
Next we show that $\limsup_{t\ra\infty} v(t)/u(t)=\infty$ if $\lim_{x\ra 1}\beta(x,0)=1$.  We can already obtain from the results of $\S4$ a positive lower bound  $\liminf_{t\ra\infty}v(t)/u(t)>0$. To see this note that we have shown that  $\sup\kappa(\cdot)\le M<\infty$ and hence  (\ref{C3}) implies  that
\be \label{T6}
u(s) \ \ge \ \frac{1}{M|\psi'(1)|}\frac{du(s)}{ds} \ , \quad s\ge 0.
\ee
 We conclude that $v(t)\ge [u(t)-1]/M|\psi'(1)|$ for $t\ge 0$. Since (\ref{D3}) also implies that $v(t)\ge M_1>0$ for all $t\ge 1$, it follows that  there exists $M_2>0$ such that  $v(t)\ge M_2 u(t)$ for $t\ge 1$. 
\begin{corollary}
Assume $\phi(\cdot), \ \psi(\cdot)$ satisfy (\ref{D1}), (\ref{E1}) and that $\lim_{x\ra1}\beta(x,0)=1$. Then if $u(\cdot), \ v(\cdot),$ are given by (\ref{C3}), (\ref{D3})  one has  $\liminf_{t\ra\infty}v(t)/u(t)>0$ and $\limsup_{t\ra\infty} v(t)/u(t)=\infty$.  
\end{corollary}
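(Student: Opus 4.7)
The first assertion $\liminf_{t\to\infty} v(t)/u(t) > 0$ is essentially proved in the paragraph preceding the corollary: from $\sup_{t \ge 0}\kappa(t) \le M < \infty$ (obtained from the $\S 4$ beta-function bounds) and (\ref{C3}), $du/ds \le M|\psi'(1)| u$, so $u(t) - 1 \le M|\psi'(1)| v(t)$; combined with $v(t) \ge v(1) > 0$ for $t \ge 1$ this yields $v(t) \ge M_2 u(t)$.

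For $\limsup_{t\to\infty} v(t)/u(t) = \infty$ I argue by contradiction. Suppose $v(t)/u(t) \le C$ for all $t \ge 0$; then $v'(t) = u(t) \ge v(t)/C$ forces $u$ (and $v$) to grow at least exponentially. Following the opening of the proof of Lemma~5.1, the bound $\sup\kappa \le M$ together with Lemma~4.1 and Lemma~1 of \cite{c} produces $\gamma_0 \in (0,1)$ such that $w(\gamma_0,t)/w(0,t) \ge 1/e$ for all $t$. Rewriting this as $w_0(F(\gamma_0,t))/w_0(F(0,t)) \ge 1/e$ and invoking the criterion (\ref{C4}) of Proposition~4.1 (applicable since $\beta_0 = 1$), I obtain $F(\gamma_0,t) - F(0,t) \le 2g(F(0,t))$ for large $t$. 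Since $\beta_0 = 1$ forces $g(x)/(1-x) \to 0$ and $F(0,t) \to 1$ by Lemma~2.1, this yields $(1-F(\gamma_0,t))/(1-F(0,t)) \to 1$, which via the identity (\ref{P6}) and the asymptotic $(1-y)f(y) \to 1$ from (\ref{C6}) translates to
\begin{equation*}
\frac{\hat{F}(f(0)u(t),t)}{\hat{F}(f(\gamma_0)u(t),t)} \to 1 \quad \text{as } t \to \infty.
\end{equation*}

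The contradiction is drawn from a positive lower bound on $\hat{F}(f(\gamma_0)u(t),t) - \hat{F}(f(0)u(t),t)$. From $\partial_z \hat{F}(z,t) = \exp[-\int_0^t \Gamma(x(s;z))\,ds]$ (formula (\ref{S6}) combined with (\ref{AA6})) and integration in $z$, this difference equals $[f(\gamma_0) - f(0)]\,u(t)\,e^{-I(t)}$, where $I(t)$ is an integral of $\Gamma$ along a characteristic lying between those with endpoints $0$ and $\gamma_0$ at time $t$. The decisive claim is $I(t) = O(1)$ as $t \to \infty$. Since $\Gamma(1) = \Gamma'(1) = 0$ by Lemma~6.1, $\Gamma(x) = O((1-x)^2)$ near $x = 1$; on the bulk interval $[0,T_0]$ where the characteristic stays near $1$, the linearisation of (\ref{A2}) at $x=1$ gives $1 - x(s) \asymp u(s)/u(t)$, and the exponential growth of $u$ forced by the contradiction hypothesis makes $\int_0^{T_0}(u(s)/u(t))^2\,ds = O(1)$; on the complementary interval $[T_0,t]$, a change of variables $ds = dx/(\phi(x)-\kappa(s)\psi(x))$ together with $\sup\kappa < \infty$ controls the remaining contribution. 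Hence $e^{-I(t)} \ge c_0 > 0$, and
\begin{equation*}
\hat{F}(f(\gamma_0) u(t), t) - \hat{F}(f(0) u(t), t) \ge c_0 [f(\gamma_0) - f(0)]\, u(t).
\end{equation*}
Combined with $\hat{F}(f(0)u(t),t) \le (f(0) + C_2 C) u(t)$ from Lemma~6.1 under $v/u \le C$, the ratio is bounded strictly below $1$ uniformly in large $t$, contradicting the limit.

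The main obstacle is the uniform bound $I(t) = O(1)$. The bulk contribution rests on combining $\Gamma(x) = O((1-x)^2)$ with the linearisation $1 - x(s) \asymp u(s)/u(t)$ and the exponential growth of $u$ (itself a consequence of the contradiction hypothesis). The endpoint contribution is more delicate, because although $\sup\kappa < \infty$ bounds $\kappa$ from above, $\kappa(s)$ may approach $\phi(x(s))/\psi(x(s))$ and cause slow transit through $[0,1-\delta]$; quantifying this via the change-of-variables estimate $\int_0^{1-\delta} \Gamma(x)/|\phi(x) - \kappa\psi(x)|\,dx$ is the technically delicate step on which the whole argument hinges.
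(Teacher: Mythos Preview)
Your approach to $\limsup_{t\to\infty} v(t)/u(t)=\infty$ is genuinely different from the paper's, and the gap you flag at the endpoint can in fact be closed---but only at the cost of strengthening the hypotheses beyond what the corollary assumes.

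Here is why the endpoint works once you import extra structure. Using $\liminf v/u>0$ (the first part) you get $v(T_0)\gtrsim u(T_0)$; combined with $u(T_0)\asymp u(t)$ (which follows from $z(T_0)=f(1-\delta)u(T_0)\ge f(x^*)u(t)$ and $u(T_0)\le z(T_0)/f(0)\lesssim u(t)$ under $v\le Cu$) and $v(t)-v(T_0)=\int_{T_0}^t u\le Cu(t)$, one obtains $t-T_0\le C(v(t)-v(T_0))/v(T_0)=O(1)$. With $\sup\kappa<\infty$ this forces $u(s)\asymp u(t)$ on $[T_0,t]$, hence $f(x(s))=z(s)/u(s)\ge f(x^*)u(t)/u(s)$ is bounded below, so $x(s)$ stays bounded away from $0$ and $\int_{T_0}^t\Gamma(x(s))\,ds$ is finite. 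Your bulk estimate also works once you note that $u(s)\le C'u(t)$ (from $z(s)\ge f(0)u(s)$ and $z(s)\lesssim u(t)$) together with $\int_0^{T_0}u=v(T_0)\le Cu(T_0)$ gives $\int_0^{T_0}u(s)^2\,ds\le C'u(t)v(T_0)\lesssim u(t)^2$.

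The structural issue is that both your first-part argument and the endpoint repair above use $\sup\kappa<\infty$, which comes from Lemma~4.1 and hence requires (\ref{O1}). The corollary as stated assumes only (\ref{D1}), (\ref{E1}). The paper's proof sidesteps this entirely: for the $\liminf$ it works directly with the sandwich $f(x)u(t)+C_1v(t)\le f(F(x,t))\le f(x)u(t)+C_2v(t)$ from Corollary~6.1 and the conservation law, never invoking a bound on $\kappa$. For the $\limsup$, assuming $v\le Ku$ the paper introduces comparison functions $G_1(u)=\int_0^1 w_0(f^{-1}(f(x)u))\,dx$ and $G_2(u)=\int_0^1 w_0(f^{-1}((f(x)+C_2K)u))\,dx$, sandwiches $u(t)$ between their inverse images $u_2(t)\le u(t)\le u_1(t)$ of $e^{-t}$, and then uses the criticality hypothesis $g(x)/(1-x)\to 0$ to show $G_2(2u)/G_2(u)\to 0$, which via a doubling argument forces $\int_0^t u_2/u_1(t)\to\infty$, contradicting $v/u\le K$. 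This avoids any characteristic analysis, any appeal to $\Gamma$, and any dependence on (\ref{O1}) or $\sup\kappa<\infty$.

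So: your route is salvageable under (\ref{O1}) but does not prove the corollary in the generality stated, and the endpoint step you worried about is not where the real loss lies---the real loss is the hidden hypothesis imported through Lemma~4.1.
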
 
\begin{proof}
Now $w(x,t)=e^tw(F(x,t),0)$, whence it follows   from (\ref{B1})  that
\be \label{AD6}
w(F(0,t),0)\ge e^{-t} \ , \quad w(F(1/2,t),0)\le 2e^{-t} \ .
\ee
We also have from (\ref{P6}) and Corollary 6.1  that
\be \label{U6}
f(x)u(t)+C_1 v(t) \ \le \ f(F(x,t)) \le \ f(x)u(t)+C_2 v(t) \ .
\ee
Since $\lim_{x\ra 1} f(x)(1-x)=1$, we conclude from (\ref{AD6}), (\ref{U6}) and Lemma 2.1 that there are positive constants $T_0,C_3,C_4$ such that 
\be \label{AE6}
w\left(1-\frac{C_3}{u(t)+v(t)},0\right)\ge e^{-t} \  {\rm and \ } w\left(1-\frac{C_4}{u(t)+v(t)},0\right)\le 2e^{-t} \ {\rm for \ } t\ge T_0\ .
\ee
Hence if $z(t)$ is defined as in Lemma 4.2 by $w(z(t),0)=e^{-t}$,   then (\ref{AE6}) implies that 
\be \label{AF6}
\frac{C_4}{1-z(t-\log2)} \ \le \ u(t)+v(t) \ \le \  \frac{C_3}{1-z(t)} \quad  {\rm for \ } t\ge T_0 \ .
\ee
Observe from (\ref{D3}) that
\be \label{CA6}
v(t) \  = \  \int_0^t e^{s-t}[u(s)+v(s)] \ ds \ ,
\ee
 and so we conclude from  (\ref{AF6}), (\ref{CA6})  that
\be \label{AG6}
v(t) \ \ge \  C_4\int_{t-1}^t e^{s-t}\frac{ds}{1-z(s-\log2)}  \quad  {\rm for \ } t\ge T_0+1 \ .
\ee 
Observe next from (\ref{C4})  and the fact that $\lim_{x\ra 1}g(x)/(1-x)=0$, that we can choose $T_0$ sufficiently large so that $1-z(t-1-\log2)\le 2[1-z(t)]$ for $t\ge T_0+1$. We conclude from (\ref{AF6}), (\ref{AG6}) that $v(t)/u(t)\ge C_4(e-1)/2C_3 e$ provided $t\ge T_0+1$. 

To prove that $\limsup_{t\ra\infty} v(t)/u(t)=\infty$ we assume for contradiction that there is a constant $K$ such that $v(t)\le Ku(t)$ for $t\ge 0$. 
By Lemma 2.1 and (\ref{U6}) we see that $\lim_{t\ra\infty}[u(t)+v(t)]=\infty$, and so we conclude that $\lim_{t\ra\infty} u(t)=\infty$.  We also see from (\ref{U6}) that
\be \label{V6}
f(x)u(t) \ \le \ f(F(x,t)) \le \ [f(x)+C_2K] u(t)  \ , \quad t\ge 0.
\ee
We define functions $G_1(u), \ G_2(u)$ with domain $u\ge 1$ by
\be \label{W6}
G_1(u)  =  \int_0^1 w(f^{-1}[f(x)u],0) \ dx  , \quad  G_2(u)  =  \int_0^1 w(f^{-1}[\{f(x)+C_2K\}u],0) \ dx.
\ee
 Evidently $G_1(\cdot), \ G_2(\cdot)$ are strictly decreasing functions satisfying $\lim_{u\ra\infty} G_j(u)=0, \ j=1,2$ and $G_1(u)\ge G_2(u)$ for all $u\ge 1$. Hence there exists $T_0\ge 0$ such that there are  strictly increasing functions $u_j(t), \ j=1,2$ with domain $t\ge T_0$ such that $G_j(u_j(t))=e^{-t}, \ j=1,2$.   
 It follows from (\ref{V6}) that $u_2(t)\le u(t)\le u_1(t)$ for $t\ge T_0$, and hence
 \be \label{X6}
 \frac{v(t)}{u(t)} \ \ge \ \frac{1}{u_1(t)}\int_{T_0}^t u_2(s) \ ds, \quad t\ge T_0\  .
 \ee
 We obtain a contradiction to the assumption $\sup[v(\cdot)/u(\cdot)]\le K$  by showing that the RHS of (\ref{X6}) converges to $\infty$ as $t\ra\infty$. 
 
To see this let $\eta=\inf_{0\le x<1}[f(x)/\{f(x)+C_2K\}]$ so $0<\eta<1$ and $u_2(t)\ge \eta u_1(t)$ for $t\ge T_0$. Observe next that there is a positive constant $C_3$ such that
\begin{multline} \label{Y6}
f^{-1}[\{f(x)+C_2K\}2u]-f^{-1}[\{f(x)+C_2K\}u] \ \ge \\
 C_3\{1- f^{-1}[\{f(x)+C_2K\}u] \ \} \ , \quad 0\le x<1, \ u\ge 1.
\end{multline}
Since the function $g(\cdot)$ of (\ref{B4}) satisfies $\lim_{x\ra 1} g(x)/(1-x)=0$, it follows from (\ref{C4}), (\ref{Y6}), that $\lim_{u\ra\infty} G_2(2u)/G_2(u)=0$.  Hence for any $\del>0$ there exists $u_\del\ge 1$ such that $G_2(2u)/G_2(u)\le\del$ for $u\ge u_\del$. Since $\lim_{t\ra\infty} u_1(t)=\infty$ and $\liminf_{t\ra\infty}u_2(t)/u_1(t)>0$,  it also follow that $\lim_{t\ra\infty} u_2(t)=\infty$. Hence there exists $T_\del$ such that $u_2(t)\ge u_\del$ for all $t\ge T_\del$.  It follows that if $t_0\ge T_\del$ then 
\be \label{Z6}
 t_0\le t\le t_0+\log(1/\del) \quad {\rm implies \ } u_2(t)\ge u_2(t_0+\log(1/\del))/2 \ .
\ee
We conclude that the RHS of (\ref{X6}) is bounded below by $\eta\log(1/\del)/2$ provided $t\ge T_\del+\log(1/\del)$. 
\end{proof}
In order to prove the inequality (\ref{AM4}) and obtain a lower bound on $\kappa(\cdot)$ in the case when $\lim_{x\ra 0} \phi(x)/x<\infty$,  we need to consider the dependence of the function $u(t), \ t\ge 0,$ on $v(t), \  t\ge 0$. Since $v(t)$ is a strictly increasing function of $t$ we may write $u(t)=U(v(t)), \ t\ge 0$. It follows from (\ref{C3}), (\ref{D3}), Lemma 4.1 and Corollary 6.2 that
\be \label{AI6}
\phi'(1) \ \le \ U'(v) \ \le \  C \ {\rm for \ } v\ge 0, \quad U(v) \ \le \ Cv  \ {\rm for \ } v\ge 1,
\ee
where $C$ is a positive constant. 
\begin{lem}
Assume $\phi(\cdot), \ \psi(\cdot)$ satisfy (\ref{D1}), (\ref{E1}), (\ref{O1}), (\ref{Y1}) and either that $\lim_{x\ra 0}\phi(x)/x=\infty$ or the functions $\phi(\cdot),  \psi(\cdot)$ are $C^2$ on the closed interval $[0,1]$.  Assume also that the solution $w(x,t)$ of (\ref{A1}), (\ref{B1}) satisfies (\ref{U1}) with $0<\beta_0\le 1$. Then for any $k\ge 1$ there is a constant $C_k$, independent of $t\ge 0$, which is an  increasing function of $k$ and satisfying $\lim_{k\ra 1} C_k=1,$ such that
\be \label{AJ6}
\frac{\pa\hat{F}(f(0)u(t),t)}{\pa z} \ \le \ \frac{\pa\hat{F}(kf(0)u(t),t)}{\pa z} \ \le \ C_k\frac{\pa\hat{F}(f(0)u(t),t)}{\pa z} \ , \quad t\ge 0.
\ee
\end{lem}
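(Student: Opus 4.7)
The first inequality in (\ref{AJ6}) is immediate: under (\ref{Y1}), Lemma 6.1 gives that $z\mapsto g(z,u)$ is concave for every $u>0$, so by Corollary 6.1 the function $z\mapsto\hat F(z,t)$ is convex, and hence $\pa\hat F(z,t)/\pa z$ is nondecreasing in $z$.

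For the second inequality the plan is to use the characteristic formula (\ref{S6}) to write
\[
\frac{\pa\hat F(kf(0)u(t),t)/\pa z}{\pa\hat F(f(0)u(t),t)/\pa z} \ = \ \exp\Big[\int_0^t \{\Ga(x_1(s))-\Ga(x_k(s))\}\,ds\Big] \ ,
\]
where $x_1(s), x_k(s)$ are the solutions of (\ref{A2}) with terminal data $x_1(t)=0$ and $x_k(t)=\bar x_k:=f^{-1}(kf(0))$. Note $\bar x_k\ra 0$ as $k\ra 1$. Since characteristics of (\ref{A2}) do not cross, one has $x_1(s)<x_k(s)$ for $s<t$, and by (\ref{Y1}) $\Ga$ is decreasing, so the integrand is non-negative (whence $C_k$ is automatically increasing in $k$). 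The task is to bound this integral uniformly in $t\ge 0$ by a quantity that tends to $0$ as $k\ra 1$.

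Writing $\tau=t-s$, I would split the integral at some $\tau_0>0$. For the large-$\tau$ tail, Lemma 4.1 together with Lemma 2.3 gives a uniform-in-$t$ upper bound on $\kappa(\cdot)$, whence Lemma 2.1 forces both $x_1(t-\tau)$ and $x_k(t-\tau)$ to lie in an arbitrarily small neighbourhood of $1$ uniformly in $t$, once $\tau_0$ is taken large enough. Since Lemma 6.1 yields $\Ga(1)=\Ga'(1)=0$ and $\Ga\in C^2((0,1])$ under (\ref{O1}), one has $|\Ga(x_1)-\Ga(x_k)|\le M(1-x_1(s))d(\tau)$ with $d(\tau):=x_k(s)-x_1(s)$; linearizing (\ref{A2}) about $x=1$ and applying Gronwall gives uniform control on both $d(\tau)$ and $1-x_1(s)$ for $\tau\ge \tau_0$, so the tail contribution is $o(1)$ as $\tau_0\ra\infty$, uniformly in $k$ and $t$.

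The near-terminal part $\tau\in[0,\tau_0]$ requires separate treatment in the two sub-cases. If $\phi(\cdot),\psi(\cdot)\in C^2([0,1])$, then $\Ga$ is $C^1$ on all of $[0,1]$, and a Gronwall argument for $d(\tau)$ gives $d(\tau)\le\bar x_k e^{L\tau_0}$ on $[0,\tau_0]$ with $L$ depending on $\|\phi'\|_\infty,\|\psi'\|_\infty$ and $\sup\kappa$; therefore $\int_0^{\tau_0}[\Ga(x_1)-\Ga(x_k)]\,d\tau\le\|\Ga'\|_\infty\bar x_k \tau_0 e^{L\tau_0}\ra 0$ as $k\ra 1$. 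If instead $\lim_{x\ra 0}\phi(x)/x=\infty$, Corollary 4.1 supplies a positive lower bound $\kappa(t)\ge C_1>0$ uniformly in $t$, so $dx_1/d\tau\big|_{\tau=0}=\kappa(t)\psi(0)\ge C_1\psi(0)>0$ and hence $x_1(t-\tau)\ge c\tau$ on $[0,\tau_0]$ for small enough $\tau_0$. Since $\Ga(x)\sim\phi'(x)$ as $x\ra 0$ and $\phi'$ is integrable at $0$ (as $\phi$ is bounded), $\Ga(c\tau)$ provides a $k$-independent integrable majorant, and dominated convergence in $\tau$ produces the required $o(1)$ estimate uniformly in $t$. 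The main obstacle is this singular sub-case, where the integrand blows up pointwise as $\tau\downarrow 0$ and must be controlled by combining the lower bound on $\kappa$ from Corollary 4.1 with the integrability of $\phi'$ near $0$; the tail estimate, while technically delicate, hinges fundamentally on the double vanishing $\Ga(1)=\Ga'(1)=0$ furnished by Lemma 6.1.
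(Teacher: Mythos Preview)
Your overall architecture is sound and represents a genuinely different route from the paper. The paper works entirely in the $(z,v)$ coordinates: it rewrites the integral via the change of variables $s\leftrightarrow v'$ as in (\ref{AL6}), splits the $v'$-interval into three pieces at $v\min\{\gamma,1/2\}$ and $v-\delta U(v)$, and bounds each piece using the algebraic estimates (\ref{AM6})--(\ref{AZ6}) together with $U(v)\le Cv$ from (\ref{AI6}). You instead stay in the $(x,\tau)$ coordinates and split at a single $\tau_0$. Your near-terminal analysis is essentially correct in both sub-cases, and corresponds to the paper's piece $[v-\delta U(v),v]$; the paper's three-part decomposition buys a cleaner separation of the $(k-1)$-dependence from the $\delta$-dependence, while your two-part split is conceptually simpler.

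There is, however, a real gap in your tail argument. You invoke Lemma 2.1 to conclude that $x_1(t-\tau)$ and $x_k(t-\tau)$ lie in a small neighbourhood of $1$ uniformly in $t$ once $\tau_0$ is large. Lemma 2.1 only asserts $\lim_{t\to\infty}F(0,t)=1$; it says nothing about the characteristic at intermediate times $s=t-\tau$ uniformly in $t$. Your subsequent ``linearise (\ref{A2}) about $x=1$ and apply Gronwall'' does not rescue this: the linearisation $d(1-x)/d\tau\approx[\phi'(1)+\kappa(t-\tau)|\psi'(1)|](1-x)$ has a coefficient whose sign is not controlled without a \emph{lower} bound on $\kappa$, which in the $C^2([0,1])$ sub-case you do not have. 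What actually gives the uniform decay $1-x_1(t-\tau)\le Ce^{-\gamma\tau}$ is the conservation-law estimate (\ref{AJ2}) combined with a lower bound on the tail of $w(\cdot,t-\tau)$, and that lower bound comes from Lemma 4.1 (the uniform positivity $\inf\beta(\cdot,t)\ge\beta_\infty>0$) via an inequality of the type (\ref{AK2}). Once you have exponential decay of $1-x_1(t-\tau)$ in $\tau$, the quadratic vanishing $\Gamma(x)=O((1-x)^2)$ from Lemma 6.1 indeed makes the tail $\int_{\tau_0}^t\Gamma(x_1)\,d\tau\le C\int_{\tau_0}^\infty e^{-2\gamma\tau}\,d\tau$ small uniformly in $t$ and $k$. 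So the structure you propose does work, but the mechanism driving the tail estimate is Lemma 4.1 and (\ref{AJ2}), not Lemma 2.1 and linearisation.
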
 
\begin{proof}
Let $z_k(s), \  s\le t,$ be the solution to (\ref{F6}) with terminal condition $z_k(t)=kf(0)u(t)$. From the convexity of $\hat{F}(\cdot,t)$  and (\ref{S6}) it will be sufficient for us to show that
\be \label{AK6}
\int_0^t \frac{\pa g(z_k(s),u(s))}{\pa z} \ ds \ \ge \   \int_0^t \frac{\pa g(z_1(s),u(s))}{\pa z} \ ds -D_k, \quad k\ge 1,
\ee
for a constant $D_k$ depending only on $k$ which satisfies $\lim_{k\ra 1} D_k=0$.  Making the change of variable $t\leftrightarrow v, \ s\leftrightarrow v'$, we have that the integral in (\ref{AK6}) can be written as
\be \label{AL6}
\int_0^t \frac{\pa g(z_k(s),u(s))}{\pa z} \ ds \ = \ \int_0^v \frac{\pa g(\tilde{z}_k(v'),U(v'))}{\pa z} \ \frac{dv'}{U(v')} \ ,
\ee
where $z_k(s)=\tilde{z}_k(v')$. 
Observe now  that
\be \label{AM6}
\tilde{z}_k(v') \ \ge  \ kf(0)U(v)+C_1\{v-v'\} \quad {\rm for \ } 0\le v'\le v,
\ee
where $C_1$ is the constant in Lemma 6.1.  Upon using the properties of the function $\Ga(\cdot)$ stated in Lemma 6.1, it also follows  from (\ref{AM6}) and the  second inequality of (\ref{AI6}) that  there are positive constants $C,\ga$ with $0<\ga\le 1$ such that 
\be \label{AN6}
 \frac{\pa g(\tilde{z}_k(v'),U(v'))}{\pa z}  \ \le \ \frac{CU(v')^2}{[f(0)U(v)+C_1\{v-v'\}]^2}  \quad {\rm for \ } 0\le v'\le \ga v,  \ v\ge 1, \ k\ge 1.
\ee
In  the case when the functions $\phi(\cdot),\psi(\cdot)$ are $C^1$ on the closed interval $[0,1]$ we can take $\ga=1$ in (\ref{AN6}). Otherwise we need to take $\ga<1$. 
We conclude that  there is a constant $C$ such that
\be \label{AO6}
\int_0^{v\min\{\ga,1/2\}} \frac{\pa g(\tilde{z}_k(v'),U(v'))}{\pa z} \ \frac{dv'}{U(v')} 
 \ \le  \ C \quad {\rm  for \ } v\ge 1, \ k\ge 1.
\ee
We also have from the properties of the function $\Ga(\cdot)$ that  if $0<\del<1$ then there is a constant $C_\del$ such that
\be \label{AP6}
0 \ \le \ \frac{\pa g(z,u)}{\pa z}-\frac{\pa g(z',u)}{\pa z'} \ \le \ \frac{C_\del u^2(z'-z)}{z^3} \quad {\rm for \ } 
f(\del)u\le z\le z' \ .
\ee
Observe now from Corollary 6.1 that
\be \label{AQ6}
\tilde{z}_1(v') \ \le \ \tilde{z}_k(v') \ \le \ \tilde{z}_1(v')+(k-1)f(0)U(v) \quad {\rm for \ } 0\le v'\le v.
\ee
It follows then from (\ref{AP6}), (\ref{AQ6})  that there is a constant $C$ such that
\be \label{AR6}
\int_0^{v\min\{\ga,1/2\}} \left[\frac{\pa g(\tilde{z}_1(v'),U(v'))}{\pa z} -\frac{\pa g(\tilde{z}_k(v'),U(v'))}{\pa z}\right]\ \frac{dv'}{U(v')} 
 \ \le  \ C(k-1) \quad {\rm  for \ } v\ge 1, \ k\ge 1.
\ee

Next we note that there exists $\del_0>0$ such that if $0<\del\le \del_0$ then there is a constant $\eta(\del)$ with the property  $\lim_{\del\ra0}\eta(\del)=0$ such that
\be \label{AS6}
\int_{v-\del U(v)}^v \frac{\pa g(\tilde{z}_1(v'),U(v'))}{\pa z} \ \frac{dv'}{U(v')}   \ \le \  \eta(\del) \quad {\rm if \ } v\ge 1.
\ee
 The inequality (\ref{AS6}) follows from (\ref{AI6}) in the case when the functions $\phi(\cdot),\psi(\cdot)$ are $C^1$ on the closed interval $[0,1]$ since then we can take $\ga=1$ in the inequality (\ref{AN6}).  In the case when $\lim_{x\ra\ 0}\phi(x)/x=\infty$ we need to use  Corollary 4.1 that $\inf\kappa(\cdot)>0$.   Defining $T$ by $v(T)=v$ we have from (\ref{C3}), (\ref{D3})  that
 \be \label{AT6}
 v(T-t_1) \ \le \ v(T)-t_1 e^{C_2\psi'(1)t_1} u(T) \quad {\rm for \ } 0\le t_1\le T, 
 \ee
where $C_2=\sup\kappa(\cdot)$.  Hence there exists $\del_1>0$ such that  for $0<\del\le \del_1$ one has
\be \label{AU6}
\int_{v-\del U(v)}^v \frac{\pa g(\tilde{z}_1(v'),U(v'))}{\pa z} \ \frac{dv'}{U(v')}   \ \le \ 
\int_{T-2\del}^T \Ga(x(s)) \ ds \ .
\ee
We conclude  from (\ref{A2}), (\ref{AA6}) upon using the inequality $\inf\kappa(\cdot)>0$ that
\be \label{AV6}
\int_{T-2\del}^T \Ga(x(s)) \ ds \ \le \ \eta(\del) \quad {\rm where \ } \lim_{\del\ra 0}\eta(\del)=0.
\ee

It follows from (\ref{AQ6}) and Lemma 6.1 that
\be \label{AW6}
\tilde{z}_k(v') \   \le  \ \tilde{z}_1(v'-(k-1)f(0)U(v)/C_1) \quad {\rm if \ } v'\ge(k-1)f(0)U(v)/C_1 ,
\ee
where $C_1$ is the constant of Lemma 6.1. Hence there exists $k_0>1, \ \del_2>0$ such that for $v\ge 1, \ 1\le k\le k_0, \ 0\le\del\le\del_2$, one has
\be \label{AX6}
\int_{v\min\{\ga,1/2\}}^{v-\del U(v)} \frac{\pa g(\tilde{z}_k(v'),U(v'))}{\pa z} \ \frac{dv'}{U(v')} 
\ge \ \int_{v\min\{\ga,1/2\}}^{v-\del U(v)-\rho} \frac{\pa g(\tilde{z}_1(v'),U(v'+\rho))}{\pa z} \ \frac{dv'}{U(v'+\rho)} \ ,
\ee
where $\rho=(k-1)f(0)U(v)/C_1$. Next observe that
\be \label{AY6}
 \frac{\pa g(z,U_1)}{\pa z}\frac{1}{U_1}-\frac{\pa g(z,U_2)}{\pa z}\frac{1}{U_2} \ = \ [f(x_1)\Ga(x_1)-f(x_2)\Ga(x_2)]/z \ , \quad {\rm where \ } f(x_1)U_1=z,  \ f(x_2)U_2=z.
\ee
We see now from (\ref{C6}), (\ref{AY6}) and Lemma 6.1 that for any $\ve$ satisfying $0<\ve\le 1$ there is a constant $C_\ve$ depending on $\ve$ such that 
\be \label{AZ6}
\left|  \frac{\pa g(z,U_1)}{\pa z}\frac{1}{U_1}-\frac{\pa g(z,U_2)}{\pa z}\frac{1}{U_2}\right| \ \le \ 
\frac{C_\ve|U_1-U_2|}{z^2} \quad {\rm for \ } \ve\le x_1,x_2\le 1 \ .
\ee
If the functions $\phi(\cdot),\psi(\cdot)$ are $C^2$ on the closed interval $[0,1]$ then $\lim_{\ve\ra 0}C_\ve=C_0<\infty$, but in the case $\lim_{x\ra 0}\phi(x)/x=\infty$  it is possible that $C_\ve$ becomes unbounded as $\ve\ra 0$.  
To estimate from below the integral on the RHS of (\ref{AX6}) we take $z=\tilde{z}_1(v'), \ U_1=U(v'+\rho), \ U_2=U(v')$ in (\ref{AZ6}) with $v\min\{\ga,1/2\}\le v'\le v-\del U(v)-\rho$. Since $\inf\kappa(\cdot)>0$ if $\lim_{x\ra 0}\phi(x)/x=\infty$ we may take $\ve=\ve(\del)>0$ in (\ref{AZ6}) in that case. 
We conclude then from (\ref{AI6}), (\ref{AM6}), (\ref{AZ6}) that
\be \label{BA6}
\int_{v\min\{\ga,1/2\}}^{v-\del U(v)-\rho} \left|\frac{\pa g(\tilde{z}_1(v'),U(v'+\rho))}{\pa z} \ \frac{1}{U(v'+\rho)}-\frac{\pa g(\tilde{z}_1(v'),U(v'))}{\pa z} \ \frac{1}{U(v')}\right| \ dv' \ \le \ C_\del (k-1) \ ,
\ee
where $C_\del$ depends only on $\del$ and can diverge as $\del\ra 0$ in the case when $\lim_{x\ra 0}\phi(x)/x=\infty$. It follows now from (\ref{AR6}), (\ref{AS6}), (\ref{BA6}) that there exists $k_0>1$ such that  (\ref{AK6}) holds for $1\le k\le k_0$. To prove the result for $k\ge k_0$ we repeat the argument but in this case we do not need to be concerned with the case $\lim_{x\ra 0}\phi(x)/x=\infty$. 
\end{proof}
\begin{corollary}
Assume that $\phi(\cdot), \ \psi(\cdot)$  and the solution $w(x,t)$ of (\ref{A1}), (\ref{B1}) satisfy the assumptions of Lemma 6.2. Then for any $\ve$ with $0<\ve<1$ there is a constant $\ga_\ve$ such that the function $F(x,t)$ defined by (\ref{A2}) has the property
\be \label{BB6}
\frac{\pa F(0,t)}{\pa x} \ \le \ \frac{\pa F(x,t)}{\pa x} \ \le [1+\ga_\ve]\frac{\pa F(0,t)}{\pa x} \quad {\rm for \ } 0\le x\le \ve, \ t\ge 0, 
\ee
and $\lim_{\ve\ra0} \ga_\ve=0$.
\end{corollary}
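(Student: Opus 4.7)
The plan is to differentiate the conjugation identity (\ref{P6}) in $x$ and decompose the ratio in (\ref{BB6}) as a product of three factors, each of which will be close to $1$ when $\ve$ is small, uniformly in $t\ge 0$. Differentiating $f(F(x,t))=\hat F(f(x)u(t),t)$ in $x$ gives $f'(F(x,t))\,\pa F(x,t)/\pa x=f'(x)u(t)\,\pa\hat F(f(x)u(t),t)/\pa z$, and hence
\[
\frac{\pa F(x,t)/\pa x}{\pa F(0,t)/\pa x}\;=\;\frac{f'(x)}{f'(0)}\cdot\frac{f'(F(0,t))}{f'(F(x,t))}\cdot\frac{\pa\hat F(f(x)u(t),t)/\pa z}{\pa\hat F(f(0)u(t),t)/\pa z}.
\]
The lower bound in (\ref{BB6}) is then immediate from the convexity of $F(\cdot,t)$ in $x$, noted after (\ref{B2}), which makes $\pa F(\cdot,t)/\pa x$ an increasing function of $x$.

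For the upper bound I would bound each of the three factors above. From (\ref{C6}), $(\log f)'(x)=-\psi'(1)/\psi(x)>0$, and since $\psi$ is positive and decreasing on $[0,1)$ (by the convexity of $\psi$ together with $\psi'(1)<0$), this quantity is increasing in $x$; hence $f$ is log-convex and $f'=f\cdot(\log f)'$ is monotone increasing on $[0,1)$. Consequently the first factor is bounded above by $f'(\ve)/f'(0)$, which tends to $1$ by continuity of $f'$ at $0$, and the second factor is bounded by $1$ since $F(x,t)\ge F(0,t)$. Setting $k_\ve = f(\ve)/f(0)\ge 1$, one has $f(0)u(t)\le f(x)u(t)\le k_\ve f(0)u(t)$ for $0\le x\le \ve$, and Lemma 6.2 bounds the third factor by $C_{k_\ve}$, with $C_{k_\ve}\to 1$ as $\ve\to 0$ since $k_\ve\to 1$. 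Taking $\ga_\ve = [f'(\ve)/f'(0)]\,C_{k_\ve}-1$ then yields (\ref{BB6}).

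Because Lemma 6.2 is already in hand, there is no real obstacle in this corollary: it is a short unwinding of (\ref{P6}). The delicate step was in fact Lemma 6.2 itself, which quantified how little $\pa\hat F(z,t)/\pa z$ varies as $z$ ranges over the interval $[f(0)u(t),\,kf(0)u(t)]$ with a constant $C_k\to 1$ as $k\to 1$, uniformly in $t\ge 0$; this is exactly the estimate that the third factor above needs.
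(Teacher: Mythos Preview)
Your proof is correct and follows essentially the same approach as the paper: differentiate the conjugation identity (\ref{P6}) to get $\pa F(x,t)/\pa x = [f'(x)u(t)/f'(F(x,t))]\,\pa\hat F(f(x)u(t),t)/\pa z$, use the monotonicity of $f'$ (together with that of $F(\cdot,t)$) to control the prefactor, and invoke Lemma 6.2 for the $\hat F$ factor. The paper's version is terser but the decomposition and the ingredients are the same.
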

\begin{proof}
From (\ref{P6}) we have the identity
\be \label{BC6}
\frac{\pa F(x,t)}{\pa x} \ = \ \frac{f'(x)u(t)}{f'(F(x,t))} \frac{\pa \hat{F}(f(x)u(t),t)}{\pa z} \  .
\ee
From (\ref{C6}) we see that $f'(\cdot)$ is an increasing function, and since $F(\cdot,t)$ is also increasing we conclude that $f'(F(0,t))\le f'(F(x,t))$ for $0\le x<1$.  The result follows from (\ref{BC6}) and Lemma 6.2.
\end{proof}
\begin{proposition}
Assume that $\phi(\cdot), \ \psi(\cdot)$  and the solution $w(x,t)$ of (\ref{A1}), (\ref{B1}) satisfy the assumptions of Lemma 6.2. If $\lim_{x\ra 0}\phi(x)/x=\infty$ then (\ref{AM4}) holds. If $\phi(\cdot), \ \psi(\cdot)$ are $C^2$ on the closed interval $[0,1]$ and the initial data additionally  satisfies (\ref{X1}), then $\inf\kappa(\cdot)>0$ and (\ref{AM4}) holds. 
\end{proposition}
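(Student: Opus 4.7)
I would prove Proposition 6.2 in two parts corresponding to the two cases; in each I must establish the stated bounds on $\kappa(\cdot)$ and then the positivity (\ref{AM4}).

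\emph{Bounds on $\kappa$.} In the first case, $\inf\kappa(\cdot)>0$ is immediate from Corollary 4.1, while Lemma 4.1 combined with Lemma 2.3 yields $\sup\kappa(\cdot)<\infty$; equivalently, integration by parts gives $\langle X_t\rangle=1/w(0,t)$ (using $\int_0^1 xc(x,t)\,dx=\int_0^1 w(x,t)\,dx=1$), so this translates into $w(0,t)$ being bounded above. For the second case, the inequality $\beta(x,t)\le\beta(F(x,t),0)$ of \cite{c} combined with Lemma 2.1 and (\ref{X1}) forces $\beta(\cdot,t)\le 1$ uniformly for $t$ large, and (\ref{X1}) also makes $g(\cdot)$ of (\ref{B4}) monotonically decreasing near $x=1$. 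I would prove $\inf\kappa(\cdot)>0$ by contradiction: if $\kappa(t_n)\to 0$ along some sequence, I would transcribe the scaling construction of Proposition 4.2 into the $(u,v)$ coordinates of $\S6$, using Corollary 6.2 ($\limsup v/u=\infty$) and the monotonicity of $g$ to derive an inconsistency that the pathology of that proposition is ruled out precisely by (\ref{X1}).

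\emph{Proof of (\ref{AM4}).} From $w(x,t)=e^tw_0(F(x,t))$ and $w(0,t)\ge 1$, it suffices to bound $w_0(F(x,t))/w_0(F(0,t))$ below, uniformly in $t$, for each $x\in [0,1)$. The logarithmic derivative $d\log w_0/dy=-\beta(y,0)/g(y)$ rewrites this as a uniform upper bound on $\int_{F(0,t)}^{F(x,t)}\beta(y,0)/g(y)\,dy$. Via (\ref{P6}) the task becomes estimating $\hat F(f(x)u(t),t)-\hat F(f(0)u(t),t)$. Lemma 6.2 gives $\hat F_z(kf(0)u(t),t)\le C_k\hat F_z(f(0)u(t),t)$ for every $k\ge 1$; combined with (\ref{BC6}) which relates $\hat F_z$ to $\pa F/\pa x$, with the convexity bound $\pa F(0,t)/\pa x\le 1-F(0,t)$, and with the asymptotic $1-F(0,t)\sim 1/[f(0)u(t)+Cv(t)]$ coming from Corollary 6.1, I would extract the desired $t$-independent bound.

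\emph{Main obstacle.} The crux is the sharpness of the bound on $\hat F(f(x)u(t),t)-\hat F(f(0)u(t),t)$. Applying only $\hat F_z\le 1$ from Corollary 6.1 gives $(f(x)-f(0))u(t)$, which diverges whenever $u(t)\to\infty$; in the critical case the time average of $\kappa$ converges to $\kappa_0$ only to leading order, so $u(t)$ can grow subexponentially. The compensating decay must come from $\hat F_z(f(0)u(t),t)=\exp[-\int_0^t\Gamma(x_1(s))\,ds]$, with $\Gamma$ as in (\ref{AA6}) and $x_1$ the characteristic ending at $0$. Showing that this integral is large enough to absorb the factor $u(t)$ requires careful estimates paralleling the proof of Lemma 6.2, exploiting (\ref{O1}), (\ref{Y1}) and the behavior of $\Gamma$ near $x=0$ and $x=1$; this is the principal technical challenge, and it is here that the dichotomy between $\lim_{x\ra 0}\phi(x)/x=\infty$ and the $C^2$-plus-(\ref{X1}) hypothesis enters, exactly as it did in the proof of Lemma 6.2.
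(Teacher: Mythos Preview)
Your proposal has a genuine gap in the proof of (\ref{AM4}), and your plan for $\inf\kappa(\cdot)>0$ in the second case misses the actual mechanism.

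\textbf{The gap in (\ref{AM4}).} Once $\inf\kappa(\cdot)>0$ is known (from Corollary~4.1 in the first case), Lemma~2.3 gives a uniform upper bound $w(0,t)\le 1+\alpha$. The conservation law (\ref{B1}) then forces $w(x_0,t)\ge 1/2$ at the fixed point $x_0=1/(2(1+\alpha))$, hence $w_0(F(x_0,t))/w_0(F(0,t))\ge 1/(1+2\alpha)$. By (\ref{C4}) this \emph{immediately} yields $F(x_0,t)-F(0,t)\le C\,g(F(0,t))$, and convexity of $F(\cdot,t)$ converts this to $\partial F(0,t)/\partial x\le C'g(F(0,t))$. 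Corollary~6.3 then gives $F(x,t)-F(0,t)\le C_x\,g(F(0,t))$ for each fixed $x$, and one more application of (\ref{C4}) finishes (\ref{AM4}). Your convexity bound $\partial F(0,t)/\partial x\le 1-F(0,t)$ is too weak by precisely the factor $(1-F(0,t))/g(F(0,t))$, which diverges in the critical case; with only that bound the integral $\int_{F(0,t)}^{F(x,t)}\beta(y,0)/g(y)\,dy$ is unbounded. Your ``Main obstacle'' paragraph correctly diagnoses this, but the proposed cure---showing directly that $\exp[-\int_0^t\Gamma(x_1(s))\,ds]$ decays fast enough to cancel $u(t)$---would require linking $\int_0^t\Gamma$ to $\log[(1-F(0,t))/g(F(0,t))]$, and there is no evident way to do that without going through the conservation law anyway.

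\textbf{The second case.} The paper does not argue by contradiction via Proposition~4.2. The mechanism is this: if $w(0,t)\ge K$ is large, the conservation law together with (\ref{C4}) and Corollary~6.3 forces the \emph{reverse} inequality $\partial F(0,t)/\partial x\ge C_0(\log K)\,g(F(0,t))$, whence $w_0(F(x,t))/w_0(F(0,t))\le K^{-Cx}$ for small $x$. Assumption (\ref{X1}) makes $g$ monotone so that $h_0$ inherits the same bound (as in Proposition~5.1), and the representation (\ref{AO5}) then gives $\beta(0,t)\ge 1-\delta$. Feeding this into the evolution equation (\ref{AF4}) for $\log\langle X_t\rangle$, exactly as in Corollary~4.1 but now using $\phi'(0)<\infty$, shows $\langle X_t\rangle$ cannot remain small; Lemma~2.3 finishes. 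Your sketch (``transcribe the scaling construction of Proposition~4.2 \ldots\ using Corollary~6.2'') does not capture this and does not explain where (\ref{X1}) enters.
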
 
\begin{proof}
Assuming first that $\inf\kappa(\cdot)>0$, we see from (\ref{B1}) and Lemma 2.3 that there exists $\al>0$ such that $1\le w(0,t)\le 1+\al$ for all $t\ge 0$. We conclude from (\ref{B1})  that
\be \label{BD6}
\frac{w_0(F(1/2(1+\al),t))}{w_0(F(0,t))} \ = \ \frac{w(1/2(1+\al),t)}{w(0,t)} \ \ge \ \frac{1}{1+2\al} \ . 
\ee
In view of the convexity of the function $F(\cdot,t)$ it follows from (\ref{C4}) and (\ref{BD6}) that there exists $T_0>0$ such that
\be \label{BE6}
\frac{\pa F(0,t)}{\pa x} \ \le \  3(1+\al)\log(1+2\al) g(F(0,t)) \quad {\rm for \ } t\ge T_0 \ .  
\ee
It follows from Corollary 6.3 and (\ref{BE6}) that if $t\ge T_0$ then
\begin{multline} \label{BF6}
w(x,t) \ = \ e^tw_0(F(x,t)) \  \ge \ e^tw_0(F(0,t)+x\pa F(x,t)/\pa x) \ \ge \\
e^tw_0(F(0,t)+x(1+\ga_x)\pa F(0,t)/\pa x) \ \ge \  C(x,\al) e^tw_0(F(0,t)) \ \ge \ C(x,\al) ,
\end{multline}
for a positive constant $C(x,\al)$ depending only on $x,\al$. We have proved the inequality (\ref{AM4}).

Finally we need to show that $\inf\kappa(\cdot)>0$ in the case when the functions $\phi(\cdot), \ \psi(\cdot)$ are $C^2$ on the closed interval $[0,1]$ and the initial data additionally  satisfies (\ref{X1}). We show that for any $\del>0$ there exists $T_\del, K_\del>0$ such that if $t\ge T_\del$ and $w(0,t)\ge K_\del>2$, then $\beta(0,t)\ge 1-\del$.  To see this  let us suppose that $w(0,t)=e^tw_0(F(0,t))\ge K_\del>2$, whence it follows from (\ref{B1}), (\ref{C4}) and (\ref{BB6}) that there are positive constants $T_0, C_0$ such that
\be \label{BG6}
\frac{\pa F(0,t)}{\pa x}  \ \ge \  C_0g(F(0,t))\log K_\del \quad {\rm for \ } t\ge T_0 \ .
\ee
We conclude from (\ref{C4}), (\ref{BB6}), (\ref{BG6}) that there is a constant $C_1>0$ such that
\be \label{BH6}
\frac{w_0(F(x,t))}{w_0(F(0,t))} \ \le \  \exp\left[-C_1x\log K_\del\right] \quad {\rm for \ } 0\le x\le 1/2.
\ee
Now just as in Proposition 5.1 we see  that the ratio $h_0(F(x,t))/h(F(0,t))$ is also bounded by the RHS of (\ref{BH6}) since we are assuming that the initial data satisfies (\ref{X1}).  Thus from (\ref{AO5}) we obtain for any $\ve\le 1/2$  the lower bound
\be \label{BI6}
\beta(0,t) \ \ge \ \beta(F(0,t),0)\left\{1- \exp\left[-C_1\ve\log K_\del\right]\right\}/(1+\gamma_\ve) \quad {\rm for \ }t\ge T_0 \ .
\ee
It is clear from (\ref{BI6}) that we may choose $K_\del,T_\del$ such that $\beta(0,t)\ge 1-\del$  if $t\ge T_\del$ and $w(0,t)\ge K_\del$. 

To complete the proof of $\inf\kappa(\cdot)>0$ we argue as in Corollary 4.1. Thus
\be \label{BJ6}
\frac{d}{dt}\log   \ \langle X_t\rangle \ \ge (1-\del)\left[\frac{ \langle\phi(X_t)\rangle}{\langle X_t\rangle}+1\right]-1 \ \quad {\rm if \ }  \langle X_t\rangle \le 1/K_\del \ , t\ge T_\del \ .
\ee
We have already observed in Corollary 4.1 that there exists $\ga>0$ such that $P(X_t>\ga\langle  X_t\rangle)>1/2$ for $t\ge 0$. Let $x_0\in(0,1)$ be the point at which the function $\phi(\cdot)$ achieves its maximum.   Then from  the Chebysev inequality we have that
\be \label{BK6}
P(\ga\langle  X_t\rangle<X_t<x_0) \ \ge \  1/2-\langle X_t\rangle/x_0 \ \ge \ 1/4 \quad {\rm if \ } \langle X_t\rangle\le x_0/4  \ .
\ee
Hence (\ref{BJ6}), (\ref{BK6}) imply that
\be \label{BL6}
\frac{d}{dt}\log   \ \langle X_t\rangle \ \ge (1-\del)\left[\frac{ \phi(\ga\langle X_t\rangle)}{4\langle X_t\rangle}+1\right]-1 \ \quad {\rm if \ }  \langle X_t\rangle \le 1/K_\del \ , t\ge T_\del \ ,
\ee
provided $K_\del>4/x_0$. Choosing $\del$ now to satisfy $(1-\del)[\ga\phi'(0)/4+1]>1$, we see from (\ref{BL6}) that there exists $T_{1,\del}\ge T_\del$ such that  $\langle X_t\rangle\ge 1/K_\del$ for $t\ge T_{1,\del}$.  We conclude from Lemma 2.3 that $\inf\kappa(\cdot)>0$. 
\end{proof}
\begin{proof}[Proof of Theorem 1.3]
The result follows from Lemma 4.1, Corollary 4.1, Lemma 4.2 and Proposition 6.1.
\end{proof}
We conclude this section by making some observations concerning the conditions (\ref{O1}), (\ref{Y1}) on the functions $\phi(\cdot),   \psi(\cdot)$.  In Lemma 6.1 we saw that (\ref{O1}) implies that the function $z\ra g(z,u)$ is increasing. This fact can also be concluded from (\ref{H6}) and the concavity of the function $z\ra g(z,u)$, which follows from (\ref{Y1}). Therefore the only part of the proof of Theorem 1.3 in which we need to assume (\ref{O1}) is in the proof of Lemma 4.1. We can however replace Lemma 4.1 by the following  proposition in the case when $\lim_{x\ra 0}\phi(x)/x<\infty$, and so dispense entirely with the assumption (\ref{O1}) for the proof of Theorem 1.3.
\begin{proposition}
Assume $\phi(\cdot), \ \psi(\cdot)$ satisfy (\ref{D1}), (\ref{E1}), (\ref{Y1}) and that the initial data for (\ref{A1}), (\ref{B1}) satisfies (\ref{U1}) with $\beta_0=1$. Then if  $\lim_{x\ra 0}\phi(x)/x<\infty$ there is a constant $C$ such that $\kappa(t)\le C$ for $t\ge 0$.  
\end{proposition}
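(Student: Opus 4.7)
The plan is to use the $(z,u)$ reformulation of this section to reduce the proposition to a uniform positive lower bound on $\tau(t) := t - W_0(F(0,t))$, with $W_0 = -\log w_0$. From $w(0,t) = e^{\tau(t)}$ together with $\langle X_t\rangle = 1/w(0,t)$ (which follows from $\int_0^1 xc(x,t)\,dx = 1$), Lemma~2.3 reduces the statement to $\liminf_{t\to\infty}\tau(t) > 0$; Lemma~4.2 gives the non-strict bound $\tau(t) \ge 0$, and the finite-time range is handled by continuity and positivity of $\kappa$.

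Under (\ref{Y1}), Lemma~6.1 gives $\Gamma$ decreasing with $\Gamma(1) = 0$, so $\Gamma(x) \ge 0$ for $x \in [0,1]$, whence $\partial_z g \ge 0$: the function $g(z,u)$ is increasing and concave in $z$. Consequently Corollary~6.1 yields $\hat{F}(\cdot,t)$ convex with $\partial_z\hat{F} \le 1$ and the double-sided comparison $f(x)u(t) + C_1 v(t) \le f(F(x,t)) \le f(x)u(t) + C_2 v(t)$. The hypothesis $\lim_{x\to 0}\phi(x)/x < \infty$ now enters through $\Gamma(0) = \phi'(0) + \phi'(1) < \infty$, which combined with (\ref{Y1}) gives the uniform bound $\partial_z g \le \Gamma(0)$ on the whole domain $z \ge f(0)u$.

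The heart of the argument then mirrors the critical-case proof of Theorem~1.2 in Section~5, but carried out directly in the $(z,u)$ variables. Using the convexity inequality $F(x,t) \ge F(0,t) + x\partial_x F(0,t)$, the monotonicity of $w_0$, and the asymptotic $w_0(y + zg(y))/w_0(y) \to e^{-z}$ from the proof of Proposition~4.1 (valid because $\beta_0 = 1$), the conservation law yields $w(0,t) \ge (1+o(1))\mu(t)/(1-e^{-\mu(t)})$ as $t \to \infty$, where $\mu(t) := \partial_x F(0,t)/g(F(0,t))$. Since $\mu/(1-e^{-\mu}) = 1 + \mu/2 + O(\mu^2) > 1$ for $\mu > 0$, the proposition reduces to proving $\liminf_{t\to\infty}\mu(t) > 0$. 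From the identity $\partial_x F(0,t) = u(t) f'(0) \partial_z\hat{F}(f(0)u(t),t)/f'(F(0,t))$, combined with $f'(y)/f(y)^2 \to 1/|\psi'(1)|$ as $y \to 1$ and the asymptotic $f(F(0,t)) \asymp u(t) + v(t)$ from Corollary~6.1, one expresses $\mu(t)$ asymptotically in terms of $u/(u+v)$, $\partial_z\hat{F}(f(0)u,t)$, and $\epsilon(F(0,t))$, where $\epsilon(y) := g(y)/(1-y) \to 0$ by $\beta_0 = 1$; the factor $u/(u+v)$ is controlled by Corollary~6.2.

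The main obstacle will be obtaining a lower bound on $\partial_z\hat{F}(f(0)u(t),t)$ uniformly in $t$, since Lemma~6.2 (which would supply this under (\ref{O1})) is unavailable. The pointwise bound $\partial_z g \le \Gamma(0)$ combined with (\ref{S6}) gives only the exponentially weak estimate $\partial_z\hat{F} \ge e^{-\Gamma(0)t}$. I would therefore argue by contradiction: if $\mu(t_n) \to 0$ along a subsequence, then $\tau(t_n) \to 0$, forcing $F(0,t_n)$ to coincide asymptotically with the reference curve $z(\cdot)$ defined by $w_0(z(t)) = e^{-t}$; combining the ODE $\tau'(t) = 1 - \beta(F(0,t),0)\kappa(t)\psi(0)\mu(t)$ (obtained by differentiating $\tau(t) = t - W_0(F(0,t))$ and using (\ref{AN4})) with the $(z,u)$ transport dynamics guaranteed by (\ref{Y1}) and $\Gamma(0) < \infty$, and with Corollary~6.2's lower bound on $v/u$, should yield a contradiction analogous to the $\liminf\tau(t) \le \tau_0$ step in the proof of the critical case of Theorem~1.2; this closes the argument.
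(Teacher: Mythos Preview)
Your reduction to showing $\liminf_{t\to\infty}\mu(t)>0$ with $\mu(t)=\partial_xF(0,t)/g(F(0,t))$ is correct, and your asymptotic expression for $\mu(t)$ in terms of $u/(u+v)$, $\partial_z\hat F(f(0)u,t)$, and $\epsilon(F(0,t))=g(F(0,t))/(1-F(0,t))$ is also right. But this is precisely where the argument stalls: you have three factors, two of which ($u/(u+v)$ and $\partial_z\hat F$) may tend to zero while the third ($1/\epsilon$) tends to infinity, and nothing you have written supplies the needed comparison. The contradiction sketch at the end (``combining the ODE for $\tau'$ with the transport dynamics \ldots\ should yield a contradiction'') is not an argument; the ODE $\tau'(t)=1-\beta(F(0,t),0)\kappa(t)\psi(0)\mu(t)$ only tells you that $\tau$ increases when $\mu$ is small, which is the wrong direction for your purposes (you want $\tau$ bounded \emph{below}, not above). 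There is no mechanism here that forces $\mu$ back up.

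The paper closes this gap with an idea you are missing: it extends $g(\cdot,u)$ linearly from $[f(0)u,\infty)$ down to $[0,\infty)$ (this is where $\lim_{x\to0}\phi(x)/x<\infty$ is used, to make $g$ $C^1$ at $z=f(0)u$), extends $\hat F$ accordingly, and defines an auxiliary point $y(t)$ by $f(y(t))=\hat F(0,t)$, so that $y(t)<F(0,t)<z(t)$. The conservation law together with the convexity of $\hat F(\cdot,t)$ and the comparison $k'(\hat F(f(x)u,t))\ge C\,k'(\hat F(0,t))$ (which comes from $\hat F(f(x)u,t)\le C\hat F(0,t)$ via Corollary~6.1 and Corollary~6.2) gives $F(x,t)-y(t)\ge C\,dy/dt$ for $0\le x\le 1/2$; exactly as in Lemma~5.1 this forces the $y$-based delay $\tau(t)$ (defined by $y(t)=z(t-\tau(t))$) to stay above a fixed $\tau_0>0$. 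One then bounds $F(x,t)-F(0,t)\ge Cx[F(0,t)-y(t)]$ by the same convexity/concavity comparison, and if $w(0,t)$ were close to $1$ one would get $F(0,t)-y(t)\ge (\tau_0/2)g(F(0,t))$, contradicting the conservation law. The point is that the lower bound on $\partial_xF$ is obtained \emph{via} the intermediate quantity $F(0,t)-y(t)$, not directly; your attempt to bound $\partial_z\hat F(f(0)u,t)$ from below without this detour does not go through.
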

\begin{proof}
It follows from Lemma 6.1 that the function $g(z,u)$ of (\ref{G6}) is negative, increasing and concave for $z\ge f(0)u$.  We first note that the assumption  $\lim_{x\ra 0}\phi(x)/x<\infty$ implies that the function $z\ra g(z,u)$ is $C^1$ on the closed interval $[f(0)u,\infty)$ since $\lim_{x\ra 0}x\psi'(x)=0$.  Hence we can extend $g(z,u)$ to be a $C^1$ function on $[0,\infty)$ by setting $\pa g(z,u)/\pa z= \pa g(f(0)u,u)/\pa z$ for $0\le z\le f(0)u$.  The extended function $g(\cdot,u)$ is negative, increasing, concave and $g(0,u)=-\al_1u$ for some positive constant $\al_1\ge \al_0$, where $\al_0$ is defined by (\ref{H6}). We now define an extended function $\hat{F}(z,t), \ z\ge 0,$ as the solution to the initial value problem (\ref{O6}) in the domain $\{(z,t): \ z>0,t>0\}$, and it is clear that   the extended function $\hat{F}(\cdot,t)$ is increasing convex and $\pa \hat{F}(z,t)/\pa z\le 1$ for $z\ge 0$.  We further define a function $y(t)$ by $f(y(t)) \ = \ \hat{F}(0,t)$ where  the function $f(\cdot)$ is determined by (\ref{C6}). Observe that in the case of quadratic $\phi(\cdot),\psi(\cdot)$ this function coincides with the function $y(t)$ of (\ref{G5}). Since $z+C_1v(t)\le \hat{F}(z,t)\le z+C_2v(t)$ for $z,t\ge 0$  as in Corollary 6.1, we have that  $\lim_{t\ra\infty} \hat{F}(0,t)=\infty$. Hence there  exists $T_0\ge 0$ such that  $y(t)$ is uniquely defined for $t\ge T_0$ and satisfies 
$0<y(t)<F(0,t)<1$. 

Let $k:[f(0),\infty)\ra [0,1)$ be the inverse function of $f:[0,1)\ra [f(0),\infty)$.  Since $f(\cdot)$ is strictly increasing and convex, it follows that $k(\cdot)$ is strictly increasing and concave. We have now from (\ref{P6})  that
\begin{multline} \label{BM6}
F(x,t)-y(t) \ = \ k(\hat{F}(f(x)u(t),t))- k(\hat{F}(0,t)) \ \ge \\
 k'(\hat{F}(f(x)u(t),t))[\hat{F}(f(x)u(t),t)-\hat{F}(0,t)] \ \ge \  k'(\hat{F}(f(x)u(t),t))f(x)u(t)\pa \hat{F}(0,t)/\pa z \ ,
\end{multline}
where in (\ref{BM6})  we have used the fact that the function $z\ra\hat{F}(z,t)$ is increasing and convex. From (\ref{C6}) we see that the function $k(\cdot)$ is $C^1$ on $[f(0),\infty)$ and  satisfies
$\lim_{y\ra \infty} y^2k'(y)=1$. Using the fact that $\lim_{t\ra\infty} v(t)=\infty$, it follows from corollaries 6.1,6.2 that  there are positive constants $C_1,T_1$ such that 
\be \label{BN6}
k'(\hat{F}(f(x)u(t),t)) \ \ge \ C_1k'(\hat{F}(0,t)) \quad {\rm for \ } 0\le x\le 1/2, \ t\ge T_1 \ .
\ee
Hence Lemma 6.1, (\ref{O6}) and (\ref{BM6}), (\ref{BN6}) imply that there is a positive constant $C_2$ such that
\be \label{BO6}
F(x,t)-y(t) \ \ge  \ C_2 \frac{dy(t)}{dt} \quad {\rm for \ }  0\le x\le 1/2, \ t\ge T_1  \ .
\ee

Since $y(t)<F(0,t)<z(t)$ we can define as in $\S5$ a positive function $\tau(t)$ satisfying $y(t)=z(t-\tau(t))$. Following the argument of Lemma 5.1 we see that (\ref{BO6}) and the conservation law (\ref{B1}) imply that there exists $T_0,\tau_0>0$ such that $\tau(t)\ge \tau_0$ for $t\ge T_0$.  Observe next as in (\ref{BM6}) that we have
\begin{eqnarray} \label{BP6} \\ \nonumber
F(x,t)-F(0,t) \ &\ge& \  k'(\hat{F}(f(x)u(t),t))[f(x)-f(0)]u(t)\pa \hat{F}(f(0)u(t),t)/\pa z  \ , \\
F(0,t)-y(t) \ &\le& \  k'(\hat{F}(0,t))f(0)u(t)\pa \hat{F}(f(0)u(t),t)/\pa z  \  . \nonumber
\end{eqnarray}
Hence there exists positive constants $C_2,T_2$ such that
\be \label{BQ6}
F(x,t)-F(0,t) \ \ge \ C_2x[F(0,t)-y(t)] \quad {\rm for \ }  t\ge T_2, 0\le x\le 1/2.
\ee
Suppose now that $e^tw(F(0,t),0)=e^{\del}$ for some $0<\del<\tau_0/2$.  Then (\ref{C4}) implies that there exists $T_\del>0$ such that $F(0,t)-y(t)\ge\tau_0g(F(0,t))/2$ provided $t\ge T_\del$.   Hence (\ref{C4}) and (\ref{BQ6}) imply that for small $\del$  the integral on the LHS of (\ref{B1}) is strictly less than $1$. We conclude that there exists $T_2,\del_2>0$ such that $e^tw(F(0,t),0)\ge e^{\del_1}$ for $t\ge T_2$.  The result follows from Lemma 2.3.
\end{proof}

\thanks{ {\bf Acknowledgement:} This research was partially supported by NSF
under grants DMS-0500608 and DMS-0553487.

\end{document}